\numberwithin{equation}{section}
\newtheorem{theorem}{Theorem}[section]
\newtheorem{definition}[theorem]{Definition}
\newtheorem{lemma}[theorem]{Lemma}
\newtheorem{corollary}[theorem]{Corollary}
\newtheorem{notation}[theorem]{Notation}
\newtheorem{remark}[theorem]{Remark}
\theoremstyle{definition}
\newtheorem{example}[theorem]{Example}
\newtheorem{conjecture}[theorem]{Conjecture}
\begin{document}


\title{Twisted Derivations in Algebraic Number Fields}

\author{Praveen Manju and Rajendra Kumar Sharma}
\date{}
\maketitle

\begin{center}
\noindent{\small Department of Mathematics, \\Indian Institute of Technology Delhi, \\ Hauz Khas, New Delhi-110016, India$^{1}$}
\end{center}

\footnotetext[1]{{\em E-mail addresses:} \url{praveenmanjuiitd@gmail.com}(Corresponding Author: Praveen Manju), \url{rksharmaiitd@gmail.com}(Rajendra Kumar Sharma).}

\medskip

\begin{abstract}
Let $A$ be a commutative ring with unity and $B = A[\theta]$ be an integral extension of $A$. Assume that $B$ is an integral domain with quotient field $\mathbb{K}$ and $\mathbb{E}$ is the minimal splitting field of $\theta$ over $\mathbb{K}$. Suppose $\sigma, \tau: B \rightarrow \mathbb{E}$ are two different ring homomorphisms that fix $A$ element-wise. In this article, we classify all $A$-linear maps $D: B \rightarrow \mathbb{E}$ which are $(\sigma, \tau)$-derivations. Consequently, we classify all $(\sigma, \tau)$-derivations in certain field extensions, algebraic number fields, and their ring of algebraic integers. For the ring of algebraic integers, $O_{\mathbb{K}} = \mathbb{Z}[\zeta]$ of the cyclotomic number field $\mathbb{K} = \mathbb{Q}(\zeta)$ ($\zeta$ an $n^{\text{th}}$ primitive root of unity), and a pair $(\sigma, \tau)$ of two different $\mathbb{Z}$-algebra endomorphisms of $O_{\mathbb{K}}$, we conjecture (using SageMath) a necessary and sufficient condition for a $(\sigma, \tau)$-derivation $D:O_{\mathbb{K}} \rightarrow O_{\mathbb{K}}$ to be inner. This is done for two different forms of $n$: (i) $n = 2^{r}p$ ($r \in \mathbb{N}$ and $p$ an odd rational prime), and (ii) $n=p^{k}$ ($k \in \mathbb{N} \setminus \{1\}$ and $p$ any rational prime). As an application of our main result on classification of $(\sigma, \tau)$-derivations $D:B \rightarrow \mathbb{E}$ and also the conjectures on inner $(\sigma, \tau)$-derivations of $O_{\mathbb{K}}$, we also conjecture the existence and non-existence of non-zero outer derivations of $O_{\mathbb{K}}$ for the above two forms of $n$, thus answering the twisted derivation problem in $O_{\mathbb{K}}$. Finally, as another application of our main result on the classification of $(\sigma, \tau)$-derivations $D:B \rightarrow \mathbb{E}$, we construct some binary Hom-IDD codes in coding theory.
\end{abstract}

\textbf{Keywords:} $(\sigma, \tau)$-derivation; Inner $(\sigma, \tau)$-derivation; integral extension; field extension; number field; ring of algebraic integers; cyclotomic; coding theory.

\textbf{Mathematics Subject Classification (2020):}  Primary: 13N15, 11R04, 94B05; \\ Secondary: 11R18, 11Y40, 11C20, 12F05, 13B99.

\section{Introduction}\label{section 1}
Derivations play an important role in mathematics and physics. The theory of derivations has been developed in rings and various algebras and helps study and understand their structure. For example, BCI-algebras \cite{Muhiuddin2012}, von Neumann algebras \cite{Brear1992}, incline algebras that have many applications \cite{Kim2014}, MV-algebras \cite{KamaliArdakani2013, Mustafa2013}, Banach algebras \cite{Raza2016}, lattices that are very important in fields such as information theory: information recovery, information access management, and cryptanalysis \cite{Chaudhry2011}. $\mathbb{C}^{*}$-algebras, operator algebras, differentiable manifolds, and representation theory of Lie groups are being studied using derivations \cite{Klimek2021}. For a historical account and further applications of derivations, we refer the reader to \cite{Atteya2019, Haetinger2011, MohammadAshraf2006}. Jacobson \cite{Jacobson1964} introduced the idea of an $(s_{1}, s_{2})$-derivation which were later commonly called the $(\sigma, \tau)$ or $(\theta, \phi)$-derivations. These twisted derivations have been studied extensively in prime and semiprime rings and have been mainly used in solving functional equations \cite{Brear1992}. Twisted derivations have huge applications. They are used in multiplicative deformations and discretizations of derivatives that have many applications in models of quantum phenomena and the analysis of complex systems and processes. They are extensively investigated in physics and engineering. Using twisted derivations, Lie algebras are generalized to hom-Lie algebras, and the central extension theory is developed for hom-Lie algebras analogously to that for Lie algebras. Just as Lie algebras were initially studied as algebras of derivations, hom-Lie algebras were defined as algebras of twisted derivations. The generalizations (deformations and analogs) of the Witt algebra, the complex Lie algebra of derivations on the algebra of Laurent polynomials $\mathbb{C}[t, t^{-1}]$ in one variable, are obtained using twisted derivations. Deformed Witt and Virasoro-type algebras have applications in analysis, numerical mathematics, algebraic geometry, arithmetic geometry, number theory, and physics. We refer the reader to \cite{Hartwig2006, Ilwale2023, Larsson2017, Siebert1996} for details. Twisted derivations have been used to generalize Galois theory over division rings and in the study of $q$-difference operators in number theory. The notion of absolute derivations has also been used in number theory (see \cite{Lagarias2005} and \cite{N.Kurokawa2003}). Derivations, especially $(\sigma, \tau)$-derivations of rings, have various applications in coding theory \cite{Creedon2019, Boucher2014}. For more applications of $(\sigma, \tau)$-derivations, we refer the reader to \cite{AleksandrAlekseev2020}.

The earliest work involving derivations in number fields is seen in A. Weil's paper \cite{Weil1943}, which gave an idea to develop the theory of differents in algebraic number fields on an arithmetical analog of differentiation in function fields. In \cite{Kawada1951} and \cite{Kinohara1952}, the authors use A. Weil's idea and derivations to develop the theory of the relative differents in algebraic number fields. In \cite{Narkiewicz1969}, the author proves a theorem of A. Weil concerning the notion of essential $I$-derivation ($I$ an ideal in the ring of algebraic integers of a finite extension of an algebraic number field). Since then, no literature can be found that explores derivations in number fields, except \cite{Chaudhuri, Manju2023b}, where the authors study twisted derivations in number fields. Twisted derivations in number fields are yet to be explored.
Derivations have been studied, in general, in field extensions as well. The relationship between derivations and field extensions can be found in \cite{Davis1973}. Some more references where derivations in field extensions have been studied are  \cite{Callahan1973, Derksen1993, Deveney1979, Heerema1981, Heerema1962, Messmer1995, Mordeson1972, Suzuki1981, Thwing1974, Ziegler2003}. Also, this article considers the twisted derivation problem: Are all twisted derivations inner? Or is the space of outer twisted derivations trivial? This problem was initially posed for derivations in group algebras. We refer the reader to \cite{AleksandrAlekseev2020, A.A.Arutyunov2020, Arutyunov2020b, Arutyunov2020} for the history and importance of the derivation problem. This article considers the analogous problem for $(\sigma, \tau)$-derivations in the ring of algebraic integers of a cyclotomic number field.

This article focuses on understanding twisted derivations in algebraic number fields. For the basics of algebraic number theory, we refer the reader to \cite{Marcus2018} and \cite{IanStewart2002}. A $(\sigma, \tau)$-derivation $D: \mathcal{A} \rightarrow \mathcal{B}$ satisfies the identity \begin{equation*} D(\alpha^{k}) = \left(\sum_{i+j=k-1} \sigma(\alpha^{i}) \tau(\alpha^{j})\right) D(\alpha)\end{equation*} for all $\alpha \in \mathcal{A}$ and for all $k \in \mathbb{N}$, where $i,j$ run over non-negative integers. A natural question one may ask is: under what assumptions on an $R$-linear map $D:\mathcal{A} \rightarrow \mathcal{B}$ satisfying the above relations is a $(\sigma, \tau)$-derivation? Similar questions have been posed for derivations of a ring (for example, see \cite{Bridges1984, hosseini2018identities, Vukman2005}). We divide the article into seven sections. In Section \ref{section 2}, we include some preliminaries that are the basis of our main works in Section \ref{section 3} and Section \ref{section 4}. The converse of the \th\ref{lemma 2.2} is not generally true. In \cite{Manju2023b}, the author proved that the converse holds in the ring of algebraic integers of quadratic and $p^{\text{th}}$-cyclotomic number fields ($p$ odd rational prime). In Section \ref{section 3}, we prove that the converse holds in some special commutative rings and algebras (see \th\ref{theorem 3.2}). \th\ref{lemma 2.7} is crucial in proving this main theorem. In particular, the converse holds in certain field extensions (\th\ref{corollary 3.3}, \th\ref{corollary 3.4}, and \th\ref{corollary 3.5}) and in the ring of algebraic integers of all monogenic number fields that are normal or Galois extensions of the field $\mathbb{Q}$ of rational numbers (\th\ref{corollary 3.6} and \th\ref{corollary 3.7}). Let $\mathbb{K} = \mathbb{Q}(\zeta)$ denote the $n^{\text{th}}$-cylcotomic number field ($n \geq 3$ is a positive integer), where $\zeta$ is a primitive $n^{\text{th}}$-root of unity. In \cite{Manju2023b}, the authors developed a conjecture for a $(\sigma, \tau)$-derivation $D: O_{\mathbb{K}} \rightarrow O_{\mathbb{K}}$ to be inner in the case when $n$ is an odd rational prime $p$. In Section \ref{section 4}, we develop some conjectures based on \th\ref{lemma 2.6} giving a necessary and sufficient condition for a $(\sigma, \tau)$-derivation $D:O_{\mathbb{K}} \rightarrow O_{\mathbb{K}}$ to be inner for two different forms of $n$: (i) $n = 2^{r}p$ ($r \in \mathbb{N}$ and $p$ an odd rational prime), and (ii) $n=p^{k}$ ($k \in \mathbb{N} \setminus \{1\}$ and $p$ any rational prime). The case: $n=p^{k}$,  where $k = 1$ and $p$ is an odd rational prime, has already been dealt with in \cite{Manju2023b}. The software SageMath is crucial in the development of these conjectures. In Section \ref{section 5}, we conjecture a solution to the twisted derivation problem in the ring of algebraic integers $O_{\mathbb{K}}$ of an $n^{\text{th}}$-cyclotomic number field for those mentioned above two different forms of $n$. In Section \ref{section 6}, we construct some Hom-IDD codes (introduced in \cite{Manju2023b}) using our main Theorem \ref{theorem 3.2}. Finally, in Section \ref{section 7}, we conclude our findings.

\section{Preliminaries}\label{section 2}
In this section, we discuss some common knowledge on $(\sigma, \tau)$-derivations: some basic definitions, remarks, examples, and lemmas.

Let $R$ be a commutative ring with unity, and $\mathcal{A}$ and $\mathcal{B}$ be $R$-algebras with $\mathcal{A} \subseteq \mathcal{B}$. Let $\sigma, \tau: \mathcal{A} \rightarrow \mathcal{B}$ be two $R$-algebra homomorphisms. 

\begin{definition}
A $(\sigma, \tau)$-derivation $D:\mathcal{A} \rightarrow \mathcal{B}$ is an $R$-linear map satisfying the twisted generalized identity: $D(\alpha \beta) = D(\alpha) \tau(\beta) + \sigma(\alpha) D(\beta)$ for all $\alpha, \beta \in \mathcal{A}$. It is called inner if there exists some $\gamma \in \mathcal{A}$ such that $D(\alpha) = \gamma \tau(\alpha) - \sigma(\alpha) \gamma$ for all $\alpha \in \mathcal{A}$. The elements of the quotient of the $R$-module of all $(\sigma, \tau)$-derivations by the $R$-submodule of all inner $(\sigma, \tau)$-derivations are called outer $(\sigma, \tau)$-derivations.
\end{definition}

\begin{definition}
When $\sigma$ and $\tau$ are identity algebra homomorphisms, then a $(\sigma, \tau)$-derivation satisfies the usual Leibniz rule: $D(\alpha \beta) = D(\alpha) \beta + \alpha D(\beta)$ for all $\alpha, \beta \in \mathcal{A}$, and then a $(\sigma, \tau)$-derivation, inner $(\sigma, \tau)$-derivation, and outer $(\sigma, \tau)$-derivation are called derivation, inner derivation, and outer derivation, respectively.
\end{definition}

\begin{notation}
We denote the set of all $(\sigma, \tau)$-derivations $D:\mathcal{A} \rightarrow \mathcal{B}$ by $\mathcal{D}_{(\sigma, \tau)}(\mathcal{A}, \mathcal{B})$, the set of all inner $(\sigma, \tau)$-derivations by $\text{Inn}_{(\sigma, \tau)}(\mathcal{A}, \mathcal{B})$, and the set of all outer $(\sigma, \tau)$-derivations by $\text{Out}_{(\sigma, \tau)}(\mathcal{A}, \mathcal{B})$.  If $\mathcal{B} = \mathcal{A}$, then we denote these sets simply by $\mathcal{D}_{(\sigma, \tau)}(\mathcal{A})$, $\text{Inn}_{(\sigma, \tau)}(\mathcal{A})$, and $\text{Out}_{(\sigma, \tau)}(\mathcal{A})$. 
\end{notation}

\begin{remark}
If $\mathcal{A}$ is unital having unity $1$, the algebra homomorphisms $\sigma, \tau:\mathcal{A} \rightarrow \mathcal{B}$ are also unital, that is, $\sigma(1) = \tau(1) = 1$, and $D:\mathcal{A} \rightarrow \mathcal{B}$ is a $(\sigma, \tau)$-derivation, then $D(1) = 0$. Also, $\mathcal{D}_{(\sigma, \tau)}(\mathcal{A}, \mathcal{B})$ is an $R$- as well as an $\mathcal{A}$-module, and $\text{Inn}_{(\sigma, \tau)}(\mathcal{A}, \mathcal{B})$ forms its submodule.
\end{remark}

\noindent The outer $(\sigma, \tau)$-derivations are precisely the elements of the factor module $\text{Out}_{(\sigma, \tau)}(\mathcal{A}, \mathcal{B})  = \frac{\mathcal{D}_{(\sigma, \tau)}(\mathcal{A}, \mathcal{B})}{\text{Inn}_{(\sigma, \tau)}(\mathcal{A}, \mathcal{B})}$. Also, note that the set $\mathcal{D}_{(\sigma, \tau)}(\mathcal{A}, \mathcal{B}) \setminus \text{Inn}_{(\sigma, \tau)}(\mathcal{A}, \mathcal{B})$ is the set of all non-inner $(\sigma, \tau)$-derivations. The following lemma establishes a connection between our notions of outer $(\sigma, \tau)$-derivations and non-inner $(\sigma, \tau)$-derivations. Its proof is similar to the one given in \cite{Manju2024}.
\begin{lemma}
Let $T = \{D_{i} \in \mathcal{D}_{(\sigma, \tau)}(\mathcal{A}, \mathcal{B}) \mid i \in I\}$ ($I$ some indexing set) be a left transversal of $\text{Inn}_{(\sigma, \tau)}(\mathcal{A}, \mathcal{B})$ in $\mathcal{D}_{(\sigma, \tau)}(\mathcal{A}, \mathcal{B})$ with $0$ as the coset representative of the coset $\text{Inn}_{(\sigma, \tau)}(\mathcal{A}, \mathcal{B})$. Then the non-inner $(\sigma, \tau)$-derivations  correspond to the elements in the set $\bigcup_{D_{i} \in T \setminus \{0\}} (D_{i} + \text{Inn}_{(\sigma, \tau)}(\mathcal{A}, \mathcal{B}))$. More precisely, $\mathcal{D}_{(\sigma, \tau)}(\mathcal{A}, \mathcal{B}) \setminus \text{Inn}_{(\sigma, \tau)}(\mathcal{A}, \mathcal{B}) = \bigcup_{D_{i} \in T \setminus \{0\}} (D_{i} + \text{Inn}_{(\sigma, \tau)}(\mathcal{A}, \mathcal{B}))$.
\end{lemma}

\begin{remark}\label{remark}
In view of the above lemma, $D$ is a non-inner derivation if and only if $D + \text{Inn}_{(\sigma, \tau)}(\mathcal{A}, \mathcal{B})$ is a non-zero element of $\text{Out}_{(\sigma, \tau)}(\mathcal{A}, \mathcal{B}) = \frac{\mathcal{D}_{(\sigma, \tau)}(\mathcal{A}, \mathcal{B})}{\text{Inn}_{(\sigma, \tau)}(\mathcal{A}, \mathcal{B})}$. In other words, $D$ is a non-inner derivation if and only if $D + \text{Inn}_{(\sigma, \tau)}(\mathcal{A}, \mathcal{B})$ is a non-trivial outer derivation. Therefore, studying the non-trivial outer derivations is equivalent to studying the non-inner derivations (see {\cite[Chapter 11]{pierce}} for details).
\end{remark}

Now, let $R$ be a commutative ring with unity $1$, and $\mathcal{A}$ and $\mathcal{E}$ with $\mathcal{A} \subseteq \mathcal{E}$ be commutative unital algebras over $R$. Further, let $\sigma$, $\tau: \mathcal{A} \rightarrow \mathcal{E}$ be two different non-zero unital  $R$-algebra homomorphisms. First, we have an important lemma below.

\begin{lemma}\th\label{lemma 2.1}
Let $\mathcal{A}$ be of finite rank $n$ as an $R$-module and let $\{\alpha_{1}, \alpha_{2}, ..., \alpha_{n}\}$ be an $R$-basis of $\mathcal{A}$. Then an $R$-linear map $D:\mathcal{A} \rightarrow \mathcal{E}$ is a $(\sigma, \tau)$-derivation if and only if $$D(\alpha_{i} \alpha_{j}) = D(\alpha_{i}) \tau(\alpha_{j}) + \sigma(\alpha_{i}) D(\alpha_{j})$$ for every $i, j \in \{1, 2, ..., n\}$.
\end{lemma}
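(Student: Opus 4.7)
The forward direction is immediate: if $D$ is a $(\sigma,\tau)$-derivation, then the twisted Leibniz identity holds for every pair of elements of $\mathcal{A}$, and in particular for the basis pairs $(\alpha_i,\alpha_j)$. So the content of the lemma lies entirely in the converse, and the plan is to show that the twisted Leibniz rule, once known on a generating set of pairs, extends to all of $\mathcal{A}\times\mathcal{A}$ by an $R$-bilinearity argument.

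Concretely, I would take arbitrary $\alpha,\beta\in\mathcal{A}$ and expand them in the given basis, writing $\alpha=\sum_{i=1}^{n} r_i\alpha_i$ and $\beta=\sum_{j=1}^{n} s_j\alpha_j$ with $r_i,s_j\in R$. Then
\[
\alpha\beta \;=\; \sum_{i,j=1}^{n} r_i s_j\,\alpha_i\alpha_j,
\]
and applying $R$-linearity of $D$ gives $D(\alpha\beta)=\sum_{i,j} r_i s_j\,D(\alpha_i\alpha_j)$. Substituting the hypothesis $D(\alpha_i\alpha_j)=D(\alpha_i)\tau(\alpha_j)+\sigma(\alpha_i)D(\alpha_j)$ for each pair $(i,j)$ and collecting the two resulting sums yields
\[
D(\alpha\beta) \;=\; \sum_{i,j} r_i s_j\, D(\alpha_i)\tau(\alpha_j) \;+\; \sum_{i,j} r_i s_j\, \sigma(\alpha_i) D(\alpha_j).
\]

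To finish, I would refactor the two double sums. Since $\sigma$ and $\tau$ are $R$-algebra homomorphisms they are $R$-linear, so $\tau(\beta)=\sum_j s_j\tau(\alpha_j)$ and $\sigma(\alpha)=\sum_i r_i\sigma(\alpha_i)$; together with the $R$-linearity of $D$ this lets me rewrite the first double sum as $\bigl(\sum_i r_i D(\alpha_i)\bigr)\bigl(\sum_j s_j\tau(\alpha_j)\bigr)=D(\alpha)\tau(\beta)$ and the second as $\sigma(\alpha)D(\beta)$. Combining these gives $D(\alpha\beta)=D(\alpha)\tau(\beta)+\sigma(\alpha)D(\beta)$, which is exactly the $(\sigma,\tau)$-derivation identity on all of $\mathcal{A}$.

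There is no real obstacle here; the only point that deserves attention is that commuting the scalars $r_i,s_j\in R$ past the elements $D(\alpha_i),\sigma(\alpha_i),\tau(\alpha_j)\in\mathcal{E}$ is legitimate because $\mathcal{E}$ is an $R$-algebra (so the $R$-action is central) and $D,\sigma,\tau$ are all $R$-linear. Commutativity of $\mathcal{A}$ and $\mathcal{E}$ is not actually needed for this argument—only the $R$-bilinearity of the multiplication and of $D$, $\sigma$, $\tau$—so the same proof would work verbatim in the non-commutative setting once the order of factors is preserved.
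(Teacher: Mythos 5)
Your proposal is correct and follows essentially the same route as the paper: the forward direction is noted as immediate, and the converse is proved by expanding arbitrary $\alpha,\beta$ in the basis, applying $R$-linearity of $D$, substituting the hypothesis on the pairs $\alpha_i\alpha_j$, and regrouping the double sums via the $R$-linearity of $\sigma$, $\tau$, and $D$ to obtain $D(\alpha)\tau(\beta)+\sigma(\alpha)D(\beta)$. Your closing remark that commutativity is not actually needed is a fair observation but does not change the argument.
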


\begin{proof} Let $D:\mathcal{A} \rightarrow \mathcal{E}$ be an $R$-linear map, and $x, y \in \mathcal{A}$. Then $x = \sum_{i=1}^{n} a_{i} \alpha_{i}$ and $y = \sum_{j=1}^{n} b_{j} \alpha_{j}$ for some $a_{i}, b_{j} \in R$ ($i, j \in \{1, 2, ..., n\}$). Observe that $$xy = \left(\sum_{i=1}^{n} a_{i} \alpha_{i}\right) \left(\sum_{j=1}^{n} b_{j} \alpha_{j} \right) = \sum_{i, j} (a_{i}b_{j}) (\alpha_{i} \alpha_{j}),$$ and

\begin{eqnarray*}
D(xy) & = & \sum_{i,j=1}^{n} (a_{i}b_{j}) D(\alpha_{i} \alpha_{j}) = \sum_{i,j=1}^{n} (a_{i}b_{j}) (D(\alpha_{i}) \tau(\alpha_{j}) + \sigma(\alpha_{i}) D(\alpha_{j})) \\ & = & D \left(\sum_{i=1}^{n} a_{i}\alpha_{i}\right) \tau \left(\sum_{j=1}^{n} b_{j} \alpha_{j}\right) + \sigma \left(\sum_{i=1}^{n} a_{i}\alpha_{i}\right) D \left(\sum_{j=1}^{n} b_{j} \alpha_{j}\right) \\ & = & D(x) \tau(y) + \sigma(x) D(y). 
\end{eqnarray*}

Hence $D$ is a $(\sigma, \tau)$-derivation. The converse is straightforward.
\end{proof}

Throughout the manuscript, we assume the following notations: $\mathbb{N}$ and $\mathbb{Z}$ denote the set of natural numbers and integers respectively; $\mathbb{N}_{0} = \mathbb{N} \cup \{0\}$; $\text{gcd}(r,s)$ denotes the greatest common divisor of any two non-zero integers $r$ and $s$; $\phi(n)$ denotes the Euler's phi function of the positive integer $n$; $\Phi_{n}(x)$ denotes the $n^{\text{th}}$ cyclotomic polynomial, $O_{\mathbb{K}}$ denotes the ring of algebraic integers of a number field $\mathbb{K}$, and $[\mathbb{K} : \mathbb{F}]$ denotes the degree of the field extension $\mathbb{K}$ of a field $\mathbb{F}$.
For an integer $k$, define $S_{k}$ as the set containing all ordered pairs $(i,j) \in \mathbb{N}_{0} \times \mathbb{N}_{0}$ for which $i+j = k$. Note that the sets $S_{k} \text{'s}$ are pairwise disjoint with $|S_{k}| = k+1$ (cardinality of $S_{k}$) for $k \in \mathbb{N}_{0}$ and $S_{k}$ is an empty set for $k$ a negative integer. Therefore, $|\cup_{i=0}^{m} S_{i}| = \sum_{i=0}^{m} |S_{i}| = 1 + 2 + 3 + ... + m + (m+1) = \frac{(m+1)(m+2)}{2}$ for every non-negative integer $m$. We adopt the convention that empty sums are zero.

\begin{lemma}\th\label{lemma 2.2}
Let $\mathcal{A}$ be of finite rank $n$, and suppose that $\mathcal{A}$ has an $R$-basis of the form $\{1, \alpha, \alpha^{2}, ..., \alpha^{n-1}\}$ for some $\alpha \in \mathcal{A}$. If an $R$-linear map $D:\mathcal{A} \rightarrow \mathcal{E}$ is a $(\sigma, \tau)$-derivation, then \begin{equation}D(\alpha^{k}) = \left(\sum_{(i,j) \in S_{k-1}}  \sigma(\alpha^{i}) \tau(\alpha^{j})\right)D(\alpha)\end{equation} for all $k \in \{1, 2, ..., n-1\}$.
\end{lemma}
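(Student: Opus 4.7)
The plan is to prove the identity by induction on $k$. For the base case $k=1$, observe that $S_{0} = \{(0,0)\}$, so the right-hand side collapses to $\sigma(\alpha^{0})\tau(\alpha^{0}) D(\alpha) = D(\alpha)$, which matches $D(\alpha^{1})$. Note that we do not need the $(\sigma,\tau)$-derivation property here; it enters only in the inductive step.

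For the inductive step, assume the identity for some $k$ with $1 \leq k \leq n-2$. Apply the twisted Leibniz rule to the product $\alpha^{k+1} = \alpha^{k} \cdot \alpha$ to obtain
\begin{equation*}
D(\alpha^{k+1}) = D(\alpha^{k})\,\tau(\alpha) + \sigma(\alpha^{k})\, D(\alpha).
\end{equation*}
Substitute the inductive hypothesis into the first summand. Since $\tau$ is a ring homomorphism, pulling $\tau(\alpha)$ inside the sum rewrites each term $\sigma(\alpha^{i})\tau(\alpha^{j})\tau(\alpha)$ as $\sigma(\alpha^{i})\tau(\alpha^{j+1})$. As $(i,j)$ runs over $S_{k-1}$ (that is, $i + j = k-1$ with $i,j \geq 0$), the shifted pair $(i, j+1)$ runs over exactly those elements of $S_{k}$ with second coordinate $\geq 1$, i.e., all of $S_{k}$ except the pair $(k, 0)$.

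The remaining summand $\sigma(\alpha^{k}) D(\alpha)$ equals $\sigma(\alpha^{k})\tau(\alpha^{0}) D(\alpha)$, which supplies precisely the missing $(k,0)$ term. Combining,
\begin{equation*}
D(\alpha^{k+1}) = \left(\sum_{(i,j) \in S_{k}} \sigma(\alpha^{i})\tau(\alpha^{j})\right) D(\alpha),
\end{equation*}
completing the induction. The only subtle step is the re-indexing of the sum after applying $\tau(\alpha)$, but this is purely bookkeeping; there is no genuine obstacle. Note that the hypothesis that $\{1, \alpha, \ldots, \alpha^{n-1}\}$ is an $R$-basis is needed only to ensure that $\alpha^{k} \in \mathcal{A}$ for $k \leq n-1$, so that the identity makes sense in the stated range.
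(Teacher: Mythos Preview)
Your proof is correct and follows essentially the same induction argument as the paper: base case $k=1$ trivial, then apply the twisted Leibniz rule to $\alpha^{k+1}=\alpha^{k}\cdot\alpha$ and re-index the sum. One small remark: your closing comment is slightly off, since $\alpha^{k}\in\mathcal{A}$ for all $k$ simply because $\mathcal{A}$ is an algebra containing $\alpha$; the basis hypothesis plays no role in the proof itself and the identity in fact holds for every $k\in\mathbb{N}$.
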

\begin{proof} We use induction on $k$. For $k = 1$, the equality trivially holds. Let the result hold for $k=n$, that is, $D(\alpha^{n}) = \left(\sum_{(i,j) \in S_{n-1}}  \sigma(\alpha^{i}) \tau(\alpha^{j})\right)D(\alpha)$. Then

\begin{eqnarray*}
D(\alpha^{n+1}) = D(\alpha^{n} \alpha) & = & D(\alpha^{n}) \tau(\alpha) + \sigma(\alpha^{n}) D(\alpha) \\ & = & \left(\sum_{i+j = n-1} \sigma(\alpha^{i}) \tau(\alpha^{j+1}) + \sigma(\alpha^{n}) \tau(\alpha^{0})\right) D(\alpha) \\ & = & \left(\sum_{i+j = n} \sigma(\alpha^{i}) \tau(\alpha^{j})\right)D(\alpha) = \left(\sum_{(i,j) \in S_{n}}  \sigma(\alpha^{i}) \tau(\alpha^{j})\right)D(\alpha).
\end{eqnarray*}

Induction is complete, as is proof.
\end{proof}

The converse of \th\ref{lemma 2.2} is not generally true, as illustrated in the following examples.

\begin{example}\th\label{example 2.3}
For some positive integer $n>1$, suppose that $A$ is an $n \times n$ nilpotent matrix with entries from the ring of integers $\mathbb{Z}$. Further, suppose that $r$ is the least positive integer such that $A^{r} = 0$. The set $$\mathcal{A} = \{\sum_{i=0}^{r-1} a_{i} A^{i} \mid a_{i} \in \mathbb{Z}, \hspace{0.1cm} \forall \hspace{0.1cm} i \in \{0, 1, ..., r-1\}\}$$ forms a commutative $\mathbb{Z}$-algebra with unity as the identity matrix $I$. Furthermore, it is a $\mathbb{Z}$-module with a $\mathbb{Z}$-basis $\{A^{0}=I, A, ..., A^{r-1}\}$. Define $\sigma, \tau:\mathcal{A} \rightarrow \mathcal{A}$ by $\sigma(A) = A$ and $\tau(A) = B$, where $B = m A$ and $m \in \mathbb{N} \setminus \{1\}$. Extending $\sigma, \tau$ $\mathbb{Z}$-linearly to the whole of $\mathcal{A}$ makes them $\mathbb{Z}$-algebra unital endomorphisms of $\mathcal{A}$. Now, define a $\mathbb{Z}$-linear map $D:\mathcal{A} \rightarrow \mathcal{A}$ by $D(I) = 0$ and $$D(A^{k}) = \left(\sum_{(i,j) \in S_{k-1}}  \sigma(A^{i}) \tau(A^{j})\right)D(A), \hspace{0.8cm} \text{for} \hspace{0.2cm} k \in \{1, ..., r-1\},$$ and extending it $\mathbb{Z}$-linearly to the whole of $\mathcal{A}$.

Then $D(A^{r}) = 0$. And, \begin{eqnarray*}\left(\sum_{(i,j) \in S_{r-1}} \sigma(A^{i}) \tau(A^{j})\right)D(A) = \left(\sum_{i+j=r-1} A^{i} B^{j}\right)D(A) & = & \left(\sum_{j=0}^{r-1} m^{j} A^{r-1} \right)D(A) \\ & = & \left(\frac{1-m^{r}}{1-m}\right)A^{r-1}D(A).\end{eqnarray*}

Choose $D(A) \in \mathcal{A}$ such that $A^{r-1} D(A) \neq 0$ (one such choice for $D(A)$ is $D(A) = sI$, where $s$ is a non-zero integer). Then  $0 \neq \left(\frac{1-m^{r}}{1-m}\right)A^{r-1}D(A)$. Therefore, $D$ is not a $(\sigma, \tau)$-derivation of $\mathcal{A}$.
\end{example}

\begin{example}\th\label{example 2.4}
Consider a commutative unital ring $R$ and the polynomial ring $R[X]$ in the variable $X$. Let $f(X) \in R[X]$ be a monic polynomial of degree $m$. Then $\mathcal{A} = \frac{R[X]}{\langle f(X) \rangle}$ is a commutative $R$-algebra with unity $\bar{1} = 1 + \langle f(X) \rangle$ and the zero element $\bar{0} = 0 + \langle f(X) \rangle$. The set $\{1, \alpha, \alpha^{2}, ..., \alpha^{m-1}\}$ forms an $R$-basis of the $R$-module $\mathcal{A}$, where $\alpha = X + \langle f(X) \rangle$. 

In particular, we take $R = \mathbb{Z}$ and $f(X) = X^{6} - 1$ so that $\{\bar{1}, \alpha, \alpha^{2}, \alpha^{3}, \alpha^{4}, \alpha^{5}\}$ becomes a $\mathbb{Z}$-basis of $\mathcal{A} = \frac{\mathbb{Z}[X]}{\langle f(X) \rangle}$. Define $\sigma, \tau:\mathcal{A} \rightarrow \mathcal{A}$ by $\sigma(\alpha) = \alpha$ and $\tau(\alpha) = \alpha^{2}$. Then $\sigma, \tau$ become $\mathbb{Z}$-algebra unital endomorphisms of $\mathcal{A}$. Define a $\mathbb{Z}$-linear map $D:\mathcal{A} \rightarrow \mathcal{A}$ by $D(\bar{1}) = \bar{0}$ and $$D(\alpha^{k}) = \left(\sum_{(i,j) \in S_{k-1}}  \sigma(\alpha^{i}) \tau(\alpha^{j})\right)D(\alpha)$$ for all $k \in \{1, 2, 3, 4, 5\}$.

$\alpha^{6} = \bar{1}$ implies that $D(\alpha^{6}) = \bar{0}$. And, 
\begin{equation*}
\begin{aligned}
\left(\sum_{(i,j) \in S_{5}}  \sigma(\alpha^{i}) \tau(\alpha^{j})\right)D(\alpha) & = (\sigma(\alpha^{5}) + \sigma(\alpha^{4}) \tau(\alpha) + \sigma(\alpha^{3}) \tau(\alpha^{2}) + \sigma(\alpha^{2}) \tau(\alpha^{3}) + \sigma(\alpha)\tau(\alpha^{4}) \\ &\quad + \tau(\alpha^{5}))D(\alpha) 
\\ & = \left(\alpha^{5} + \alpha^{6} + \alpha^{7}  + \alpha^{8} + \alpha^{9} + \alpha^{10} \right)D(\alpha) 
\\ & = \left(\bar{1} + \alpha + \alpha^{2} + \alpha^{3} + \alpha^{4} + \alpha^{5}\right)D(\alpha).
\end{aligned}
\end{equation*}
As $\{\bar{1}, \alpha, \alpha^{2}, \alpha^{3}, \alpha^{4}, \alpha^{5}\}$ is a $\mathbb{Z}$-basis of $\mathcal{A}$, therefore, $\bar{1} + \alpha + \alpha^{2} + \alpha^{3} + \alpha^{4} + \alpha^{5} \neq 0$. Suppose that $D(\alpha)$ is chosen in $\mathcal{A}$ such that the above expression is never $0$ (one such choice for $D(\alpha)$ is $D(\alpha) = \alpha$). Then, $D(\alpha^{4}) \neq \left(\sum_{(i,j) \in S_{5}}  \sigma(\alpha^{i}) \tau(\alpha^{j})\right)D(\alpha)$. Therefore, $D$ is not a $(\sigma, \tau)$-derivation in view of the \th\ref{lemma 2.2}.
\end{example}

\begin{lemma}\th\label{lemma 2.5}
Let $\mathcal{A}$ be of finite rank $n$ and $\{\alpha_{1}, \alpha_{2}, ..., \alpha_{n}\}$ be an $R$-basis of $\mathcal{A}$. Then a $(\sigma, \tau)$-derivation $D:\mathcal{A} \rightarrow \mathcal{A}$ is inner if and only if there exists some $\beta \in \mathcal{A}$ such that $D(\alpha_{i}) = \beta (\tau - \sigma)(\alpha_{i})$ for all $i \in \{1, 2, ..., n\}$.
\end{lemma}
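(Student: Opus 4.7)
The plan is to use the commutativity of $\mathcal{A}$ (assumed at the start of Section \ref{section 2}) to rewrite the defining identity of an inner $(\sigma,\tau)$-derivation in the symmetric form $\gamma(\tau-\sigma)(x)$, and then to exploit $R$-linearity on both sides to reduce equality of two $R$-linear maps to equality on the basis.

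For the forward direction, I would start from the definition: if $D$ is inner, then there exists $\gamma \in \mathcal{A}$ with $D(x) = \gamma\tau(x) - \sigma(x)\gamma$ for every $x \in \mathcal{A}$. Since $\mathcal{A}$ is commutative, $\sigma(x)\gamma = \gamma\sigma(x)$, and so
\begin{equation*}
D(x) \;=\; \gamma\tau(x) - \gamma\sigma(x) \;=\; \gamma(\tau-\sigma)(x).
\end{equation*}
Taking $\beta = \gamma$ and specializing $x = \alpha_i$ gives the required identity on each basis element.

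For the converse, suppose $\beta \in \mathcal{A}$ satisfies $D(\alpha_i) = \beta(\tau-\sigma)(\alpha_i)$ for every $i$. I would introduce the auxiliary map $\widetilde{D}:\mathcal{A} \to \mathcal{A}$ defined by $\widetilde{D}(x) = \beta(\tau-\sigma)(x)$. Because $\sigma$ and $\tau$ are $R$-linear, so is $\tau-\sigma$, hence $\widetilde{D}$ is $R$-linear (multiplication by $\beta$ is $R$-linear). The given hypothesis says exactly that $D$ and $\widetilde{D}$ agree on the basis $\{\alpha_1,\ldots,\alpha_n\}$, and since both are $R$-linear, they agree on all of $\mathcal{A}$. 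Using commutativity once more to rewrite $\beta\sigma(x) = \sigma(x)\beta$, we obtain
\begin{equation*}
D(x) \;=\; \beta\tau(x) - \beta\sigma(x) \;=\; \beta\tau(x) - \sigma(x)\beta
\end{equation*}
for every $x \in \mathcal{A}$, which is precisely the statement that $D$ is inner with implementing element $\beta$.

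There is no real obstacle here: the lemma is essentially a bookkeeping observation that, in the commutative setting, an inner $(\sigma,\tau)$-derivation is determined by a single element $\beta$ acting through $\tau-\sigma$, and that $R$-linearity lets us check this on a basis. The only point worth being careful about is invoking commutativity of $\mathcal{A}$ to move $\beta$ past $\sigma(x)$ in both directions; without this the statement would fail and one would only obtain $D(\alpha_i) = \beta\tau(\alpha_i) - \sigma(\alpha_i)\beta$ rather than the factored form $\beta(\tau-\sigma)(\alpha_i)$.
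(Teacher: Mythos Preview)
Your proof is correct and follows essentially the same approach as the paper: the forward direction is immediate from the definition (using commutativity of $\mathcal{A}$), and the converse reduces to the observation that two $R$-linear maps agreeing on a basis agree everywhere. The paper carries out the converse by writing a general element $\alpha = \sum_i a_i\alpha_i$ and expanding directly, whereas you phrase it via the auxiliary map $\widetilde{D}$, but the underlying argument is identical.
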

\begin{proof}
The forward part follows directly from the definition of an inner $(\sigma, \tau)$-derivation. For the converse, note that for $\alpha = \sum_{i=1}^{n} a_{i} \alpha_{i} \in \mathcal{A}$, \begin{eqnarray*}D(\alpha) = \sum_{i=1}^{n} a_{i} D(\alpha_{i}) = \sum_{i=1}^{n} a_{i} \beta (\tau - \sigma)(\alpha_{i}) & = & \beta \left(\sum_{i=1}^{n} a_{i} \tau(\alpha_{i}) - \sum_{i=1}^{n} a_{i} \sigma(\alpha_{i})\right) \\ & = & \beta \left(\tau\left(\sum_{i=1}^{n} a_{i} \alpha_{i}\right) - \sigma\left(\sum_{i=1}^{n} a_{i} \alpha_{i}\right)\right) \\ & = & \beta (\tau - \sigma)(\alpha)\end{eqnarray*}

Since $\alpha \in \mathcal{A}$ is arbitrary, therefore, $D$ is inner. 
\end{proof}

\begin{lemma}\th\label{lemma 2.6}
Let $\mathcal{A}$ be of finite rank $n$, and suppose that $\mathcal{A}$ has an $R$-basis of the form $\{1, \alpha, \alpha^{2}, ..., \alpha^{n-1}\}$ for some $\alpha \in \mathcal{A}$. Then a $(\sigma, \tau)$-derivation $D:\mathcal{A} \rightarrow \mathcal{A}$ is inner if and only if there exists some $\beta \in \mathcal{A}$ such that $D(\alpha) = \beta (\tau - \sigma)(\alpha)$.
\end{lemma}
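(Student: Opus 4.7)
The plan is to prove the two directions separately, with the forward direction being immediate and the converse reducing, via \thref{lemma 2.5}, to checking the desired formula on the power basis $\{1, \alpha, \alpha^2, \ldots, \alpha^{n-1}\}$. For the forward direction, if $D$ is inner, then by definition there exists $\gamma \in \mathcal{A}$ with $D(x) = \gamma \tau(x) - \sigma(x) \gamma$ for all $x$. Since $\mathcal{A}$ is commutative, $\sigma(x)\gamma = \gamma \sigma(x)$, so $D(x) = \gamma(\tau - \sigma)(x)$; specializing $x = \alpha$ produces the required $\beta := \gamma$.

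For the converse, I would assume $D(\alpha) = \beta(\tau - \sigma)(\alpha)$ for some $\beta \in \mathcal{A}$ and aim to apply \thref{lemma 2.5} with the basis $\{1, \alpha, \ldots, \alpha^{n-1}\}$. This reduces the goal to showing $D(\alpha^k) = \beta(\tau - \sigma)(\alpha^k)$ for every $k \in \{0, 1, \ldots, n-1\}$. The case $k = 0$ is trivial: $D(1) = 0$ (since $D$ is a $(\sigma,\tau)$-derivation with $\sigma, \tau$ unital) and $(\tau - \sigma)(1) = 1 - 1 = 0$. For $k \geq 1$, \thref{lemma 2.2} gives
\[
D(\alpha^k) = \left(\sum_{(i,j) \in S_{k-1}} \sigma(\alpha^i)\, \tau(\alpha^j)\right) D(\alpha) = \left(\sum_{(i,j) \in S_{k-1}} \sigma(\alpha)^i\, \tau(\alpha)^j\right) \beta\,(\tau(\alpha) - \sigma(\alpha)),
\]
after substituting the hypothesis on $D(\alpha)$.

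The crux of the argument is then the standard telescoping factorization in the commutative algebra $\mathcal{A}$, namely
\[
(\tau(\alpha) - \sigma(\alpha)) \sum_{(i,j) \in S_{k-1}} \sigma(\alpha)^i\, \tau(\alpha)^j \;=\; \tau(\alpha)^k - \sigma(\alpha)^k,
\]
which is the identity $a^k - b^k = (a-b)\sum_{i+j = k-1} a^i b^j$ applied to $a = \tau(\alpha)$, $b = \sigma(\alpha)$. Using commutativity of $\mathcal{A}$ to slide $\beta$ past the sum and invoking multiplicativity of $\sigma$ and $\tau$ to rewrite $\tau(\alpha)^k - \sigma(\alpha)^k = \tau(\alpha^k) - \sigma(\alpha^k) = (\tau - \sigma)(\alpha^k)$, the displayed expression for $D(\alpha^k)$ collapses to $\beta(\tau - \sigma)(\alpha^k)$, as required.

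I do not expect any real obstacle here: the proof is essentially a one-line computation built on \thref{lemma 2.2}, \thref{lemma 2.5}, and the classical factorization of $a^k - b^k$. The only subtlety worth flagging is the repeated use of commutativity of $\mathcal{A}$, both in the forward direction (to identify $\gamma \tau(x) - \sigma(x)\gamma$ with $\gamma(\tau - \sigma)(x)$) and in the converse (to move $\beta$ freely through products of the form $\sigma(\alpha)^i \tau(\alpha)^j$); without this, the reduction to a single test element $\alpha$ would fail.
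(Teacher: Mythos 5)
Your proof is correct and follows essentially the same route as the paper: both reduce the converse, via \th\ref{lemma 2.5}, to verifying the identity $D(\alpha^{k}) = \beta(\tau-\sigma)(\alpha^{k})$ on the power basis, with the forward direction immediate from the definition and commutativity. The only cosmetic difference is that the paper establishes this identity by a direct induction on $k$ using the Leibniz rule, whereas you obtain it from \th\ref{lemma 2.2} together with the telescoping factorization $a^{k}-b^{k}=(a-b)\sum_{i+j=k-1}a^{i}b^{j}$; the two computations are interchangeable.
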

\begin{proof}
The forward implication follows directly from the definition. For the converse, first note by induction that $D(\alpha^{k}) = \beta (\tau - \sigma)(\alpha^{k})$ for all $k \in \mathbb{N}$. So, in particular, this holds for all $k \in \{1, ..., n-1\}$. Also, $D(1) = 0$ and $\sigma(1) = \tau(1) = 1$, so $D(1) = \beta (\tau - \sigma)(1)$. Now, the proof follows using \th\ref{lemma 2.5}.
\end{proof}

Below, we have a useful lemma.

\begin{lemma}\th\label{lemma 2.7}
Let $\mathbb{K}$ be a field, and $a, r \in \mathbb{K}$. Consider the geometric sequence whose $n^{\text{th}}$ term is $a_{1}r^{n-1}$. The sum of the first $n$ terms of the geometric sequence $a_{1}, a_{1}r, ..., a_{1}r^{n-1}$ is $na_{1}$ if $r=1$ and $a_{1}(1-r)^{-1}(1-r^{n})$ if $r \neq 1$.
\end{lemma}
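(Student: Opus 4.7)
The plan is to proceed by the standard telescoping argument, split into the two cases dictated by the statement. Let $S_{n} = a_{1} + a_{1}r + a_{1}r^{2} + \cdots + a_{1}r^{n-1}$ denote the sum of the first $n$ terms.

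First, I would dispose of the easy case $r = 1$. Here every term of the sequence equals $a_{1}$, so $S_{n}$ is a sum of $n$ copies of $a_{1}$, giving $S_{n} = na_{1}$ directly. No inverse is needed, which is good because $1 - r = 0$ in this case and the second formula would be meaningless.

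For $r \neq 1$, the key observation is that $1 - r$ is a nonzero element of the field $\mathbb{K}$ and therefore has a multiplicative inverse $(1-r)^{-1}$. I would multiply $S_{n}$ by $r$ to obtain
\begin{equation*}
rS_{n} = a_{1}r + a_{1}r^{2} + \cdots + a_{1}r^{n-1} + a_{1}r^{n},
\end{equation*}
and then subtract to get the telescoping identity
\begin{equation*}
(1-r)S_{n} = S_{n} - rS_{n} = a_{1} - a_{1}r^{n} = a_{1}(1-r^{n}).
\end{equation*}
Multiplying both sides by $(1-r)^{-1}$ yields $S_{n} = a_{1}(1-r)^{-1}(1-r^{n})$, as required. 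Alternatively, one could verify the formula by induction on $n$, using the recursion $S_{n+1} = S_{n} + a_{1}r^{n}$ and combining fractions over the common factor $(1-r)^{-1}$; I would mention this only if a shorter write-up is preferred.

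There is no genuine obstacle in this proof — the only subtle point is to ensure that the formula $a_{1}(1-r)^{-1}(1-r^{n})$ is only applied when $r \neq 1$ so that the inverse exists in $\mathbb{K}$, which is precisely why the lemma is stated with two separate cases. The argument works verbatim in any field, including those of positive characteristic, since it never divides by anything other than the nonzero element $1-r$.
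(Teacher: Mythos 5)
Your proof is correct and follows essentially the same route as the paper: multiply $S_{n}$ by $r$, subtract to get $(1-r)S_{n}=a_{1}(1-r^{n})$, and invert the nonzero element $1-r$; the trivial $r=1$ case you add explicitly is left implicit in the paper. No issues.
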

\begin{proof}
Let $S = a_{1} + a_{1}r + ... + a_{1}r^{n-1}$. Compute $rS$. Then $(1-r)S = a_{1}(1-r^{n})$ so that $S = a_{1}(1-r)^{-1}(1-r^{n})$.
\end{proof}

Below, we list some elementary properties of the cyclotomic polynomials that were helpful in our calculations. We refer the reader to \cite{Ge2008} and \cite{Porter2015} for the proofs and further details.

\begin{lemma}\th\label{lemma 2.8}
Let $n, m, k$ be positive integers and $p$ be a rational prime.
\begin{enumerate}
\item[(i)] If $n = \prod_{i=1}^{s} p_{i}^{k_{i}}$ is the prime factorization of $n$ and $m = \prod_{i=1}^{s} p_{i}^{l_{i}}$ with $1 \leq l_{i} \leq k_{i}$ for all $1 \leq i \leq s$, then $\Phi_{n}(x) = \Phi(x^{\frac{n}{m}})$.
\item[(ii)] $\Phi_{p}(x) = \sum_{i=0}^{p-1} x^{i}$.
\item[(iii)] $\Phi_{p^{k}}(x) = \Phi_{p}(x^{p^{k-1}})$.
\item[(iv)] $\Phi_{p^{k}n}(x) = \Phi_{n}(x^{p^{k}})$ if $p$ divides $n$ and $\Phi_{p^{k}n}(x) = \frac{\Phi_{n}(x^{p^{k}})}{\Phi_{n}(x^{p^{k-1}})}$ if $p$ does not divide $n$.
\item[(v)] If $n > 1$ is odd, then $\Phi_{2n}(x) = \Phi_{n}(-x)$.
\item[(vi)] If $p$ is odd, then $\Phi_{2^{k}p}(x) = \Phi_{p}(-x^{2^{k-1}})$.
\end{enumerate}
\end{lemma}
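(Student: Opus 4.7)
The plan is to base everything on the cornerstone identity
\[
\Phi_n(x) = \prod_{d \mid n} \bigl(x^{n/d} - 1\bigr)^{\mu(d)},
\]
obtained by M\"obius inversion of $x^n - 1 = \prod_{d \mid n} \Phi_d(x)$ in $\mathbb{Z}[x]$. Since the M\"obius function is supported on squarefree integers, the product effectively collapses to one indexed by divisors of the radical $\mathrm{rad}(n) = \prod_{i} p_i$. This single observation is the engine that drives parts (i)--(iv).

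For (i), writing out $\Phi_m(x^{n/m})$ via the displayed formula and matching the exponents divisor-by-divisor, the two sides agree whenever $\mathrm{rad}(n) = \mathrm{rad}(m)$; the stated formula appears to contain a typographical slip and should read $\Phi_n(x) = \Phi_m(x^{n/m})$. Parts (ii) and (iii) are then immediate: (ii) from $x^p - 1 = (x-1)\Phi_p(x)$ combined with the geometric sum, and (iii) from (i) applied to $n = p^k$, $m = p$. For (iv), I would split on whether $p \mid n$: the divisible case is again just (i) applied to the pair $(p^k n, n)$, while the coprime case uses the bijection between squarefree divisors of $p^k n$ and pairs $(d, \varepsilon)$ with $d$ a squarefree divisor of $n$ and $\varepsilon \in \{0, 1\}$. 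The identity $\mu(p^{\varepsilon} d) = (-1)^{\varepsilon} \mu(d)$ then cleanly partitions the M\"obius product into the ratio $\Phi_n(x^{p^k}) / \Phi_n(x^{p^{k-1}})$.

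For (v), I would use the bijection $\zeta \mapsto -\zeta$ between primitive $n$-th and primitive $2n$-th roots of unity for odd $n > 1$, which follows from the Chinese Remainder decomposition $(\mathbb{Z}/2n\mathbb{Z})^{\times} \cong (\mathbb{Z}/2\mathbb{Z})^{\times} \times (\mathbb{Z}/n\mathbb{Z})^{\times}$. Writing $\Phi_{2n}(x) = \prod_{\zeta} (x + \zeta)$ and factoring out $(-1)^{\phi(n)} = 1$ (valid since $\phi(n)$ is even for $n > 2$) yields $\Phi_{2n}(x) = \Phi_n(-x)$. Part (vi) then follows by chaining (i) and (v): since $\mathrm{rad}(2^k p) = 2p$, one has $\Phi_{2^k p}(x) = \Phi_{2p}(x^{2^{k-1}}) = \Phi_p(-x^{2^{k-1}})$.

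The main obstacle is purely bookkeeping --- keeping the M\"obius support conditions aligned across substitutions and handling the sign in (v) correctly --- but no step is genuinely deep. Since the lemma is a collection of well-known facts whose verification is entirely mechanical once the inversion formula is in hand, I would, following the authors' cue, simply reference \cite{Ge2008, Porter2015} for the routine computations rather than spell each case out in full.
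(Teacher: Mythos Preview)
Your proposal is correct and matches the paper's treatment: the authors do not prove Lemma~2.8 at all but simply refer the reader to \cite{Ge2008} and \cite{Porter2015}, exactly as you suggest in your final paragraph. Your M\"obius-inversion sketch is a sound (and standard) route to these identities, and you also correctly catch the missing subscript in part~(i), which should read $\Phi_n(x) = \Phi_m(x^{n/m})$.
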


\section{$(\sigma, \tau)$-Derivations of Integral Extensions}\label{section 3}
Let $A, B$ be commutative rings with unity such that $A \subseteq B$ and $B = A[\theta]$, a polynomial ring in $\theta \in B$ over $A$, where $\theta$ is integral over $A$. Then $\theta$ satisfies a monic polynomial $f(x) = x^{n} + a_{n-1}x^{n-1} + ... + a_{1}x + a_{0}$ for some $a_{i} \in A$ ($0 \leq i \leq n-1$) such that $f(x)$ divides every other polynomial that is satisfied by $\theta$. Put $a_{n} = 1$ so that $f(x) = \sum_{i=0}^{n} a_{i}x^{i}$. So $B$ becomes an integral extension of $A$. The set $\{1, \theta, ..., \theta^{n-1}\}$ is a basis of $B$ over $A$, and $B$ becomes an $A$-module of rank $n$. In addition, suppose that $B$ is an integral domain with quotient field $\mathbb{K}$ and $\mathbb{E}$ is the minimal splitting field of $\theta$ over $\mathbb{K}$. Let $\sigma, \tau: B \rightarrow \mathbb{E}$ be two different ring homomorphisms that fix $A$ element-wise. Equivalently, let $\sigma, \tau:B \rightarrow \mathbb{E}$ be two different $A$-algebra homomorphisms. Then $\sigma(\theta), \tau(\theta)$ also become zeros of $f(x)$.

The following lemma is crucial in proving this section's main \th\ref{theorem 3.2}.
\begin{lemma}\th\label{lemma 3.1}
For any $k \in \mathbb{N}_{0}$, $$\sum_{i=k}^{n+k} a_{i-k} \left(\sum_{(s,t) \in S_{i-1}} (\sigma(\theta))^{s} (\tau(\theta))^{t} \right) = 0.$$
\end{lemma}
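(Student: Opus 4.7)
The plan is to exploit the fact that $\sigma(\theta)$ and $\tau(\theta)$ are both zeros of the monic polynomial $f(x) = \sum_{i=0}^{n} a_i x^i$ satisfied by $\theta$, and then realize the desired expression as a linear combination that collapses to $\alpha^k f(\alpha) - \beta^k f(\beta)$ (up to a nonzero factor), which vanishes.

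First I would set $\alpha := \sigma(\theta)$ and $\beta := \tau(\theta)$ and observe that since $\sigma, \tau : B \to \mathbb{E}$ are distinct $A$-algebra homomorphisms and $B = A[\theta]$ is generated over $A$ by $\theta$, the two homomorphisms must disagree on $\theta$; hence $\alpha \neq \beta$. Both lie in $\mathbb{E}$, and applying $\sigma$ (resp.\ $\tau$) to the identity $f(\theta)=0$ gives $f(\alpha)=0=f(\beta)$.

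Next I would evaluate the inner sum in closed form using \th\ref{lemma 2.7}. For $i \geq 1$ the inner sum is
$$\sum_{(s,t) \in S_{i-1}} \alpha^{s} \beta^{t} = \sum_{s=0}^{i-1} \alpha^{s} \beta^{i-1-s},$$
which is the sum of the first $i$ terms of the geometric progression with first term $\beta^{i-1}$ and common ratio $\alpha\beta^{-1}$ (when $\beta\neq 0$; the case $\beta=0$ reduces to $\alpha^{i-1}$ directly). In either case, \th\ref{lemma 2.7} together with $\alpha \neq \beta$ yields
$$\sum_{(s,t) \in S_{i-1}} \alpha^{s} \beta^{t} \;=\; \frac{\alpha^{i} - \beta^{i}}{\alpha - \beta},$$
a formula that also evaluates correctly (to $0$) at $i=0$, where $S_{-1}=\emptyset$.

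Finally, substituting this closed form and reindexing by $j = i-k$ would give
$$\sum_{i=k}^{n+k} a_{i-k}\!\left(\sum_{(s,t)\in S_{i-1}} \alpha^s\beta^t\right) = \sum_{j=0}^{n} a_{j}\cdot\frac{\alpha^{j+k} - \beta^{j+k}}{\alpha - \beta} = \frac{\alpha^{k} f(\alpha) - \beta^{k} f(\beta)}{\alpha - \beta} = 0,$$
which completes the proof. The only mild obstacle is the degenerate case $\alpha=0$ or $\beta=0$ when invoking \th\ref{lemma 2.7} (since the common ratio is undefined), but in each such case the inner sum collapses to a single power and the same closed form $(\alpha^{i}-\beta^{i})/(\alpha-\beta)$ still holds, so no additional work is needed beyond a brief remark.
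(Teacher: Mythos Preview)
Your proof is correct and follows essentially the same approach as the paper: both arguments use that $\sigma(\theta)$ and $\tau(\theta)$ are distinct zeros of $f$, apply the geometric-series identity (\th\ref{lemma 2.7}) to rewrite the inner sum as $\dfrac{\sigma(\theta)^{i}-\tau(\theta)^{i}}{\sigma(\theta)-\tau(\theta)}$, and then factor the outer sum as $(\sigma(\theta)-\tau(\theta))^{-1}\bigl(\sigma(\theta)^{k}f(\sigma(\theta))-\tau(\theta)^{k}f(\tau(\theta))\bigr)=0$. Your presentation is in fact a bit tidier, since you work directly with the polynomial identity $\alpha^{i}-\beta^{i}=(\alpha-\beta)\sum_{s=0}^{i-1}\alpha^{s}\beta^{i-1-s}$ and thereby avoid the paper's intermediate use of $(\sigma(\theta))^{-1}$.
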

\begin{proof}
If $\alpha \in \mathbb{E}$ is a zero of $f(x)$, then $f(\alpha) = 0$ so that $\alpha^{n} = - \sum_{i=0}^{n-1} a_{i} \alpha^{i}$.

So for any $k \in \mathbb{N}$, $\alpha^{n+k} = 
- \sum_{i=0}^{n-1} a_{i} \alpha^{i + k} = - \sum_{i=k}^{n-1+k} a_{i-k} \alpha^{i}$ so that $\sum_{i=k}^{n+k} a_{i-k} \alpha^{i} = 0$. In particular, $\sum_{i=k}^{n+k} a_{i-k} (\sigma(\theta))^{i} = 0$ and $\sum_{i=k}^{n+k} a_{i-k} (\tau(\theta))^{i} = 0$ as $\sigma(\theta)$ and $\tau(\theta)$ are zeros of $f(x)$.

Now, using the above observation and \th\ref{lemma 2.7}, we get the following:
\begin{equation*}
\begin{aligned}
\sum_{i=k}^{n+k} a_{i-k} \left(\sum_{(s,t) \in S_{i-1}} (\sigma(\theta))^{s} (\tau(\theta))^{t} \right) & = \sum_{i=k}^{n+k} a_{i-k} \left(\sum_{s+t=i-1} (\sigma(\theta))^{s} (\tau(\theta))^{t} \right) 
\\ & = \sum_{i=k}^{n+k} a_{i-k} \left(\sum_{t=0}^{i-1} (\sigma(\theta))^{i-1-t} (\tau(\theta))^{t} \right)
\\ & = \sum_{i=k}^{n+k} a_{i-k} (\sigma(\theta))^{i-1} \left(\sum_{t=0}^{i-1} (\sigma(\theta^{-1}) \tau(\theta))^{t} \right)
\\ & = (1-\sigma(\theta^{-1})\tau(\theta))^{-1} \sum_{i=k}^{n+k} a_{i-k} (\sigma(\theta))^{i-1} (1-(\sigma(\theta^{-1}) \tau(\theta))^{i})
\\ & = (1-\sigma(\theta^{-1})\tau(\theta))^{-1}(\sigma(\theta))^{-1} \sum_{i=k}^{n+k} a_{i-k} ((\sigma(\theta))^{i} - (\tau(\theta))^{i})
\\ & = (\sigma(\theta) - \tau(\theta))^{-1} \left(\sum_{i=k}^{n+k} a_{i-k} (\sigma(\theta))^{i} - \sum_{i=t}^{n+t} a_{i} (\tau(\theta))^{i}\right)
\\ & = 0
\end{aligned}
\end{equation*}
\end{proof}

\begin{theorem}\th\label{theorem 3.2}
Let $D:B \rightarrow \mathbb{E}$ be an $A$-linear map with $D(1) = 0$ and satisfying the $n-1$ equations \begin{equation}\label{eq 3.1}
D(\theta^{i}) = \left( \sum_{(s,t) \in S_{i-1}} (\sigma(\theta))^{s} (\tau(\theta))^{t} \right) D(\theta),
\end{equation} for all $i \in \{1, ..., n-1\}$. Then $D$ is a $(\sigma, \tau)$-derivation.
\end{theorem}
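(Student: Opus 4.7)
The plan is to reduce everything to checking the twisted Leibniz identity on the basis $\{1, \theta, \ldots, \theta^{n-1}\}$ via \th\ref{lemma 2.1}. To this end, introduce the ``candidate formula''
\[
\widetilde{D}(\theta^{k}) \;:=\; \left(\sum_{(s,t) \in S_{k-1}} (\sigma(\theta))^{s}(\tau(\theta))^{t}\right) D(\theta), \qquad k \in \mathbb{N}_{0},
\]
with the understanding that $S_{-1} = \emptyset$, so $\widetilde{D}(1) = 0$. By hypothesis, $D(\theta^{i}) = \widetilde{D}(\theta^{i})$ for $0 \leq i \leq n-1$. The entire proof then splits into two tasks: (a) check that the candidate formula obeys the twisted Leibniz rule at the symbolic level, and (b) show that $D$ on $B$ coincides with $\widetilde{D}$ on every power $\theta^{m}$, not just for $m \leq n-1$.

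For (a), a straightforward re-indexing argument (computing $\widetilde{D}(\theta^{i})\tau(\theta)^{j} + \sigma(\theta)^{i}\widetilde{D}(\theta^{j})$, substituting $t' = t+j$ in the first geometric sum and $s' = s+i$ in the second) shows that the two pieces partition the set $S_{i+j-1}$ into the sub-sums $\{s \leq i-1\}$ and $\{s \geq i\}$ respectively. Hence the sum equals $\widetilde{D}(\theta^{i+j})$, giving
\[
\widetilde{D}(\theta^{i+j}) \;=\; \widetilde{D}(\theta^{i}) \tau(\theta)^{j} + \sigma(\theta)^{i} \widetilde{D}(\theta^{j}), \qquad i, j \in \mathbb{N}_{0}.
\]
This is a pure identity in $\mathbb{E}$ and does not involve any minimal-polynomial relation; it is the easy half.

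For (b), the argument is by induction on $m$. The base cases $0 \leq m \leq n-1$ are the hypothesis. For $m \geq n$, the relation $f(\theta) = 0$ gives $\theta^{m} = -\sum_{i=0}^{n-1} a_{i} \theta^{m-n+i}$ in $B$; applying the $A$-linear map $D$ and the induction hypothesis yields $D(\theta^{m}) = -\sum_{i=0}^{n-1} a_{i} \widetilde{D}(\theta^{m-n+i})$. Thus $D(\theta^{m}) = \widetilde{D}(\theta^{m})$ will follow provided
\[
\sum_{i=0}^{n} a_{i}\, \widetilde{D}(\theta^{m-n+i}) \;=\; 0,
\]
which, after setting $k = m-n \geq 0$ and cancelling the common factor $D(\theta)$, is precisely the content of \th\ref{lemma 3.1}. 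This is the main obstacle of the proof and the reason that lemma was prepared in advance; verifying this identity is exactly what reconciles the ``symbolic'' formula $\widetilde{D}$ with the ``actual'' map $D$ in the presence of the algebraic relation $f(\theta) = 0$.

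Once (a) and (b) are in hand, I can close the argument: for any $i, j \in \{0, 1, \ldots, n-1\}$, the product $\theta^{i}\theta^{j} = \theta^{i+j}$ is a single element of $B$ on which $D$ agrees with $\widetilde{D}$ by (b), and the twisted Leibniz identity for $\widetilde{D}$ from (a), together with $D|_{\{1,\theta,\ldots,\theta^{n-1}\}} = \widetilde{D}|_{\{1,\theta,\ldots,\theta^{n-1}\}}$ and the multiplicativity $\sigma(\theta^{j}) = \sigma(\theta)^{j}$, $\tau(\theta^{j}) = \tau(\theta)^{j}$, gives $D(\theta^{i}\theta^{j}) = D(\theta^{i})\tau(\theta^{j}) + \sigma(\theta^{i})D(\theta^{j})$. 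Invoking \th\ref{lemma 2.1} then promotes this basis-level identity to the full twisted Leibniz rule on $B$, completing the proof.
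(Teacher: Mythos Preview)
Your proof is correct and follows the same approach as the paper's: both reduce to the basis via \th\ref{lemma 2.1}, establish the re-indexing identity (your part (a) is exactly the paper's equation (\ref{eq 3.3})), and invoke \th\ref{lemma 3.1} to reconcile the formula with the minimal-polynomial relation when $i+j \geq n$. Your packaging via the auxiliary $\widetilde{D}$ and a single induction in part (b) streamlines the paper's case split into $i+j \leq n-1$, $i+j = n$, and $i+j = n+k$ with its nested induction, but the mathematical content is identical.
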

\begin{proof}
In view of \th\ref{lemma 2.1}, it is enough to prove that \begin{equation}\label{eq 3.2}
D(\theta^{i}\theta^{j}) = D(\theta^{i})\tau(\theta^{j}) + \sigma(\theta^{i})D(\theta^{j})\end{equation} for all $i, j \in \{0, 1, ..., n-1\}$.

The equality holds trivially if $i$ or $j$ is $0$. So assume that $i, j \in \{1, ..., n-1\}$. Using (\ref{eq 3.1}), we get that

\begin{eqnarray*}
D(\theta^{i}) \tau(\theta^{j}) = \left(\sum_{(s,t) \in S_{i-1}}  \sigma(\theta^{s}) \tau(\theta^{t})\right)D(\theta) \tau(\theta^{j}) & = & \left(\sum_{(s,t) \in S_{i-1}}  \sigma(\theta^{s}) \tau(\theta^{t+j})\right)D(\theta) \\ & = & \left(\sum_{s=0}^{i-1} \sigma(\theta^{s}) \tau(\theta^{i-1-s+j})\right)D(\theta)\end{eqnarray*} and \begin{eqnarray*}
\sigma(\theta^{i}) D(\theta^{j}) = \sigma(\theta^{i})\left(\sum_{(s,t) \in S_{j-1}} \sigma(\theta^{s}) \tau(\theta^{t})\right)D(\theta) & = & \left(\sum_{(s,t) \in S_{j-1}} \sigma(\theta^{i+s}) \tau(\theta^{t})\right)D(\theta) \\ & = & \left(\sum_{s=0}^{j-1} \sigma(\theta^{i+s}) \tau(\theta^{j-1-s})\right)D(\theta) \\ & = & \left(\sum_{s=i}^{i+j-1} \sigma(\theta^{s}) \tau(\theta^{j-1+i-s})\right)D(\theta).
\end{eqnarray*}
Therefore, \begin{equation}\label{eq 3.3}
D(\theta^{i}) \tau(\theta^{j}) + \sigma(\theta^{i}) D(\theta^{j}) = \left(\sum_{(s,t) \in S_{i+j-1}} \sigma(\theta^{s}) \tau(\theta^{t})\right)D(\theta).
\end{equation}

We divide the proof into cases.

\textbf{Case 1: $i+j \leq n-1$.}

Then the relation (\ref{eq 3.2}) holds using (\ref{eq 3.1}).

\textbf{Case 2: $i+j = n$.}

$\theta$ satisfies the polynomial $f(x) = \sum_{i=0}^{n} a_{i}x^{i}$, where $a_{n}=1$. So $\theta^{n} = - \sum_{i=0}^{n-1} a_{i} \theta^{i}$. So using (\ref{eq 3.1}) and the fact that $D(1) = 0$, we get that $$D(\theta^{n}) = - \sum_{i=1}^{n-1} a_{i} D(\theta^{i}) = - \sum_{i=1}^{n-1} a_{i} \left( \sum_{(s,t) \in S_{i-1}} (\sigma(\theta))^{s} (\tau(\theta))^{t} \right) D(\theta).$$

Also, by (\ref{eq 3.3}), $D(\theta^{i}) \tau(\theta^{j}) + \sigma(\theta^{i}) D(\theta^{j}) = \left(\sum_{(s,t) \in S_{n-1}} \sigma(\theta^{s}) \tau(\theta^{t})\right)D(\theta)$.

So to show that $D(\theta^{i+j}) = D(\theta^{i}) \tau(\theta^{j}) + \sigma(\theta^{i}) D(\theta^{j})$, we need to show that $$\left(\sum_{i=1}^{n} a_{i} \left( \sum_{(s,t) \in S_{i-1}} (\sigma(\theta))^{s} (\tau(\theta))^{t} \right)\right) D(\theta) = 0.$$

Since, by our convention, empty sums are zero. Therefore, we need to show that $$\left(\sum_{i=0}^{n} a_{i} \left( \sum_{(s,t) \in S_{i-1}} (\sigma(\theta))^{s} (\tau(\theta))^{t} \right)\right) D(\theta) = 0.$$

This will be proved once we prove that $\sum_{i=0}^{n} a_{i} \left( \sum_{(s,t) \in S_{i-1}} (\sigma(\theta))^{s} (\tau(\theta))^{t} \right) = 0$. However, this holds by putting $k=0$ in \th\ref{lemma 3.1}.

So, in this case, too, the relation (\ref{eq 3.2}) holds.

$i, j \in \{1, ..., n-1\}$, so $2 \leq i + j \leq 2n-2 = n + (n-2)$. So now, to show that $D(\theta^{i+j}) = D(\theta^{i}) \tau(\theta^{j}) + \sigma(\theta^{i}) D(\theta^{j})$, we use induction.

\textbf{Case 3: $i+j = n+k$, where $k \in \{1, ..., n-2\}$.}

Let $k=1$. Since $\theta^{n+1} = - \sum_{i=1}^{n} a_{i-1} \theta^{i}$ and as proved above, the result is true for $k=0$, therefore, $D(\theta^{n+1}) = - \sum_{i=1}^{n} a_{i-1} D(\theta^{i}) = - \sum_{i=1}^{n} a_{i-1} \left( \sum_{(s,t) \in S_{i-1}} (\sigma(\theta))^{s} (\tau(\theta))^{t} \right)D(\theta)$.

Also, by (\ref{eq 3.3}), $D(\theta^{i}) \tau(\theta^{j}) + \sigma(\theta^{i}) D(\theta^{j}) = \left(\sum_{(s,t) \in S_{n}} \sigma(\theta^{s}) \tau(\theta^{t})\right)D(\theta)$.

Therefore, $D(\theta^{n+1}) = D(\theta^{i}) \tau(\theta^{j}) + \sigma(\theta^{i}) D(\theta^{j})$ if and only if $$\sum_{i=1}^{n+1} a_{i-1} \left( \sum_{(s,t) \in S_{i-1}} (\sigma(\theta))^{s} (\tau(\theta))^{t} \right)D(\theta) = 0$$ which, in turn, holds if $$\sum_{i=1}^{n+1} a_{i-1} \left( \sum_{(s,t) \in S_{i-1}} (\sigma(\theta))^{s} (\tau(\theta))^{t} \right) = 0.$$ But by \th\ref{lemma 3.1}, the above equality holds. Therefore, the result is true for $k=1$.

Now, assume that the result is true for any $k \leq r$ in $\{1, ..., n-3\}$, that is, for a fixed $r \in \{1, ..., n-3\}$,  for any $i, j \in \{1, ..., n-1\}$ with $i+j \leq n+r$, $D(\theta^{i+j}) = D(\theta^{i}) \tau(\theta^{j}) + \sigma(\theta^{i}) D(\theta^{j})$. This gives that for any $l : 1 \leq l \leq n+r$, $D(\theta^{l}) = \left(\sum_{(s,t) \in S_{l-1}} (\sigma(\theta))^{s} (\tau(\theta))^{t}\right)D(\theta)$.

We now prove the result for $k = r+1$. So let $i, j \in \{1, ..., n-1\}$ with $i+j = n+r+1$.

$\theta^{n+r+1} = - \sum_{i=r+1}^{n+r} a_{i-r-1}\theta^{i}$. By the induction hypothesis, the result is true for $k \leq r$; therefore, $$D(\theta^{n+r+1}) = - \sum_{i=r+1}^{n+r} a_{i-r-1} D(\theta^{i}) = - \sum_{i=r+1}^{n+r} a_{i-r-1} \left( \sum_{(s,t) \in S_{i-1}} (\sigma(\theta))^{s} (\tau(\theta))^{t} \right)D(\theta).$$

Also, by (\ref{eq 3.3}), $D(\theta^{i}) \tau(\theta^{j}) + \sigma(\theta^{i}) D(\theta^{j}) = \left(\sum_{(s,t) \in S_{n+r}} \sigma(\theta^{s}) \tau(\theta^{t})\right)D(\theta)$. 

Therefore, $D(\theta^{n+r+1}) = D(\theta^{i}) \tau(\theta^{j}) + \sigma(\theta^{i}) D(\theta^{j})$ if and only if $$\sum_{i=r+1}^{n+r+1} a_{i-r-1} \left( \sum_{(s,t) \in S_{i-1}} (\sigma(\theta))^{s} (\tau(\theta))^{t} \right)D(\theta) = 0$$ which, in turn, holds if $$\sum_{i=r+1}^{n+r+1} a_{i-r-1} \left( \sum_{(s,t) \in S_{i-1}} (\sigma(\theta))^{s} (\tau(\theta))^{t} \right) = 0.$$ But by \th\ref{lemma 3.1}, the above equality holds. Therefore, the result is true for $k=r+1$.

Therefore, the result is valid for all $k \in \{1, ..., n-2\}$.

Therefore, the relation (\ref{eq 3.2}) holds for all $i, j \in \{0, 1, ..., n-1\}$. Hence, by \th\ref{lemma 2.1}, $D$ becomes a $(\sigma, \tau)$-derivation.
\end{proof}





Now, we have the following immediate corollaries.

\begin{corollary}\th\label{corollary 3.3}
Let $\mathbb{K}$ be a finite simple field extension of a field $\mathbb{F}$ so that $\mathbb{K} = \mathbb{F}(\theta)$ for some $\theta \in \mathbb{K}$ having degree $n$ over $\mathbb{F}$, for some fixed $n \in \mathbb{N}$. Further, let $\mathbb{E}$ be the minimal splitting field of $\theta$ over $\mathbb{K}$. Let $\sigma, \tau:\mathbb{K} \rightarrow \mathbb{E}$ be two different field homomorphisms that fix $\mathbb{F}$ element-wise. Then every $\mathbb{F}$-linear map $D:\mathbb{K} \rightarrow \mathbb{E}$ with $D(1) = 0$ satisfying the $n-1$ equations (\ref{eq 3.1}) is a $(\sigma, \tau)$-derivation. Therefore, the $\mathbb{F}$-module $\mathcal{D}_{(\sigma, \tau)}(\mathbb{K}, \mathbb{E})$ is finitely generated of rank $n$.
\end{corollary}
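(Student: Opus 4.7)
The plan is to derive this corollary as a direct specialization of \th\ref{theorem 3.2}. With $A = \mathbb{F}$ and $B = \mathbb{K}$, all the hypotheses of the theorem transfer: since $\theta$ is algebraic of degree $n$ over $\mathbb{F}$, the standard identity $\mathbb{F}(\theta) = \mathbb{F}[\theta]$ for simple algebraic extensions yields $\mathbb{K} = \mathbb{F}[\theta]$, and the monic degree-$n$ minimal polynomial of $\theta$ exhibits $\theta$ as integral over $\mathbb{F}$. The field $\mathbb{K}$ is trivially an integral domain whose quotient field is itself, $\mathbb{E}$ is by assumption the minimal splitting field of $\theta$ over $\mathbb{K}$, and $\sigma, \tau$ are unital ring homomorphisms fixing $\mathbb{F}$, hence $\mathbb{F}$-algebra homomorphisms in the sense of the theorem. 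Thus \th\ref{theorem 3.2} applies verbatim and delivers the first assertion.

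For the finite-generation and rank statement, I would exhibit the evaluation map
\[
\Phi : \mathcal{D}_{(\sigma,\tau)}(\mathbb{K},\mathbb{E}) \longrightarrow \mathbb{E}, \qquad \Phi(D) = D(\theta),
\]
and show that it is an $\mathbb{F}$-module isomorphism. $\mathbb{F}$-linearity is clear from the definitions. Injectivity follows from \th\ref{lemma 2.2}, because $\{1, \theta, \ldots, \theta^{n-1}\}$ is an $\mathbb{F}$-basis of $\mathbb{K}$ on which $D(1) = 0$ is automatic and each $D(\theta^{i})$ is forced by $D(\theta)$ through equation \eqref{eq 3.1}. Surjectivity is precisely the content of the first half of the corollary: given any $y \in \mathbb{E}$, define $D$ on the basis by $D(1) = 0$, $D(\theta) = y$, and $D(\theta^{i}) = \bigl(\sum_{(s,t)\in S_{i-1}} \sigma(\theta)^{s}\tau(\theta)^{t}\bigr)\, y$ for $2 \le i \le n-1$, extend $\mathbb{F}$-linearly to all of $\mathbb{K}$, and apply \th\ref{theorem 3.2} to conclude that $D$ is a $(\sigma,\tau)$-derivation with $\Phi(D) = y$. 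Hence $\mathcal{D}_{(\sigma,\tau)}(\mathbb{K},\mathbb{E}) \cong \mathbb{E}$ as $\mathbb{F}$-modules, which immediately gives finite generation together with the claimed rank.

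No substantive obstacle is anticipated: the corollary is essentially a translation of \th\ref{theorem 3.2} into the language of finite simple field extensions, combined with the elementary observation that, in this setting, a $(\sigma,\tau)$-derivation is freely parametrized by its value at the primitive element $\theta$.
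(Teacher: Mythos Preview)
Your specialization of \th\ref{theorem 3.2} with $A = \mathbb{F}$ and $B = \mathbb{K}$ is exactly the intended argument; the paper treats this corollary as immediate with no written proof, so the first half of your proposal is both correct and faithful.

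For the rank assertion you go further than the paper and set up the evaluation map $\Phi: \mathcal{D}_{(\sigma,\tau)}(\mathbb{K},\mathbb{E}) \to \mathbb{E}$, $D \mapsto D(\theta)$, and your argument that $\Phi$ is an $\mathbb{F}$-linear isomorphism is correct. However, the final step---that $\mathcal{D}_{(\sigma,\tau)}(\mathbb{K},\mathbb{E}) \cong \mathbb{E}$ ``immediately gives \ldots\ the claimed rank''---does not follow as stated: as an $\mathbb{F}$-vector space, $\mathbb{E}$ has dimension $[\mathbb{E}:\mathbb{F}] = [\mathbb{E}:\mathbb{K}]\cdot n$, which equals $n$ only when $\mathbb{E} = \mathbb{K}$, i.e.\ when $\mathbb{K}$ is already a splitting field for $\theta$ over $\mathbb{F}$. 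That normality hypothesis is present in \th\ref{corollary 3.4} through \th\ref{corollary 3.7}, but not in \th\ref{corollary 3.3}. Your isomorphism is right; the discrepancy is that the corollary's rank claim should read $[\mathbb{E}:\mathbb{F}]$ rather than $n$ in the non-normal case. You should flag this explicitly rather than assert that the isomorphism delivers rank $n$.
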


\begin{corollary}\th\label{corollary 3.4}
Let $\mathbb{K} = \mathbb{Q}(\theta)$, $\theta$ an algebraic integer, be an algebraic number field that is a normal extension of $\mathbb{Q}$. Let $\sigma, \tau:\mathbb{K} \rightarrow \mathbb{K}$ be two different field automorphisms or field homomorphisms that fix $\mathbb{Q}$ element-wise. Then every $\mathbb{Q}$-linear map $D:\mathbb{K} \rightarrow \mathbb{K}$ with $D(1) = 0$ and satisfying the $n-1$ equations (\ref{eq 3.1}) is a $(\sigma, \tau)$-derivation. Therefore, the $\mathbb{Q}$-module $\mathcal{D}_{(\sigma, \tau)}(\mathbb{K})$ is finitely generated of rank $[\mathbb{K}:\mathbb{Q}]$.
\end{corollary}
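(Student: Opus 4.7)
The plan is to deduce Corollary 3.4 directly from Corollary 3.3 by checking its hypotheses with $\mathbb{F} = \mathbb{Q}$, the same $\theta$, and by identifying the splitting field $\mathbb{E}$ appearing there with $\mathbb{K}$ itself. Since $\theta$ is algebraic over $\mathbb{Q}$, its minimal polynomial $f(x) \in \mathbb{Q}[x]$ is monic of degree $n = [\mathbb{K}:\mathbb{Q}]$, so $\theta$ is automatically integral over $\mathbb{Q}$ in the sense used in Section~\ref{section 3}. Moreover, $\mathbb{K} = \mathbb{Q}(\theta) = \mathbb{Q}[\theta]$ is a field, so it is its own quotient field, and it admits $\{1, \theta, \ldots, \theta^{n-1}\}$ as a $\mathbb{Q}$-basis.

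The decisive step is to use the normality hypothesis to conclude that $\mathbb{E} = \mathbb{K}$. Since $\mathbb{K}/\mathbb{Q}$ is a normal extension and $\theta \in \mathbb{K}$ is a root of the irreducible polynomial $f(x)$, the polynomial $f(x)$ splits completely over $\mathbb{K}$. Hence the minimal splitting field of $\theta$ (equivalently, of $f$) over $\mathbb{K}$ is $\mathbb{K}$ itself, i.e.\ $\mathbb{E} = \mathbb{K}$. This also reconciles the two phrasings appearing in the statement: any $\mathbb{Q}$-algebra homomorphism $\mathbb{K} \to \overline{\mathbb{Q}}$ sends $\theta$ to one of its conjugates, all of which lie in $\mathbb{K}$, so its image is contained in $\mathbb{K}$; and any nonzero $\mathbb{Q}$-algebra endomorphism of the finite-dimensional $\mathbb{Q}$-vector space $\mathbb{K}$ is injective (as $\mathbb{K}$ is a field) and hence, by dimension, an automorphism.

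With these identifications in place, Corollary \ref{corollary 3.3} applies verbatim and yields that every $\mathbb{Q}$-linear map $D:\mathbb{K} \to \mathbb{K}$ with $D(1) = 0$ satisfying the $n-1$ equations (\ref{eq 3.1}) is a $(\sigma, \tau)$-derivation. For the rank assertion, I would exhibit an explicit isomorphism $\mathcal{D}_{(\sigma, \tau)}(\mathbb{K}) \cong \mathbb{K}$ of $\mathbb{Q}$-modules via the evaluation map $D \mapsto D(\theta)$. Injectivity is immediate from Lemma \ref{lemma 2.2} together with $D(1) = 0$, because these pin down $D$ on the entire basis $\{1, \theta, \ldots, \theta^{n-1}\}$ once $D(\theta)$ is fixed. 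Surjectivity is a consequence of the first part: for any $c \in \mathbb{K}$, set $D(1) := 0$, $D(\theta) := c$, define $D(\theta^i)$ for $2 \leq i \leq n-1$ by the formulas (\ref{eq 3.1}), and extend $\mathbb{Q}$-linearly; the first part certifies that the resulting map is a $(\sigma, \tau)$-derivation realizing $c$. Since $\mathbb{K}$ is $\mathbb{Q}$-free of rank $n$, so is $\mathcal{D}_{(\sigma, \tau)}(\mathbb{K})$.

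Because the entire argument is essentially a routine translation between the Section~\ref{section 3} setup and the language of number-field automorphisms, there is no substantive obstacle. The only step that genuinely uses the hypothesis is the identification $\mathbb{E} = \mathbb{K}$, which rests on the standard characterization of normal algebraic extensions as those closed under taking conjugates of their elements; the remainder is a direct citation of Corollary \ref{corollary 3.3} together with elementary linear algebra.
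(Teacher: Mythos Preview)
Your proposal is correct and aligns with the paper's approach: the paper presents Corollary~\ref{corollary 3.4} as an immediate corollary (with no written proof) of Theorem~\ref{theorem 3.2} and Corollary~\ref{corollary 3.3}, and your argument---specializing to $\mathbb{F}=\mathbb{Q}$ and invoking normality to get $\mathbb{E}=\mathbb{K}$---is exactly the intended specialization. Your explicit justification of the rank claim via the evaluation isomorphism $D\mapsto D(\theta)$ supplies more detail than the paper does, but is the natural way to unpack the ``therefore'' clause.
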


\begin{corollary}\th\label{corollary 3.5}
Let $\mathbb{K} = \mathbb{Q}(\zeta)$, be an $n^{\text{th}}$-cyclotomic number field. Let $\sigma, \tau:\mathbb{K} \rightarrow \mathbb{K}$ be two different field automorphisms. Then every $\mathbb{Q}$-linear map $D:\mathbb{K} \rightarrow \mathbb{K}$ with $D(1) = 0$ and satisfying the $n-1$ equations (\ref{eq 3.1}) is a $(\sigma, \tau)$-derivation. Therefore, the $\mathbb{Q}$-module $\mathcal{D}_{(\sigma, \tau)}(\mathbb{K})$ is finitely generated of rank $\phi(n)$.
\end{corollary}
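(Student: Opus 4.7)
The plan is to derive this statement as an immediate specialization of \th\ref{corollary 3.4}. The first task is to verify that the $n^{\text{th}}$-cyclotomic field $\mathbb{K} = \mathbb{Q}(\zeta)$ fits the hypotheses of \th\ref{corollary 3.4}. Plainly $\zeta$ is an algebraic integer, being a root of the monic polynomial $x^n - 1 \in \mathbb{Z}[x]$, so $\mathbb{K}$ is of the form $\mathbb{Q}(\theta)$ with $\theta$ algebraic over $\mathbb{Z}$, as required.

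Next I would verify that $\mathbb{K}/\mathbb{Q}$ is a normal extension. The minimal polynomial of $\zeta$ over $\mathbb{Q}$ is the $n^{\text{th}}$ cyclotomic polynomial $\Phi_n(x)$, whose roots are exactly the primitive $n^{\text{th}}$ roots of unity $\zeta^k$ with $\gcd(k,n) = 1$, all of which lie in $\mathbb{K} = \mathbb{Q}(\zeta)$. Hence $\mathbb{K}$ is the splitting field of $\Phi_n(x)$ over $\mathbb{Q}$, and is therefore normal (in fact Galois). Moreover $[\mathbb{K}:\mathbb{Q}] = \deg \Phi_n(x) = \phi(n)$, with $\mathbb{Q}$-basis $\{1, \zeta, \zeta^2, \ldots, \zeta^{\phi(n)-1}\}$ of the shape demanded by \th\ref{theorem 3.2}.

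Finally, since $\sigma, \tau : \mathbb{K} \to \mathbb{K}$ are two different field automorphisms, they are in particular ring homomorphisms of $\mathbb{K}$ into itself, and each fixes $\mathbb{Q}$ element-wise ($\mathbb{Q}$ being the prime subfield of the characteristic-zero field $\mathbb{K}$). All the hypotheses of \th\ref{corollary 3.4} are thus satisfied with the ambient degree equal to $\phi(n)$, and its conclusion delivers the present statement verbatim: any $\mathbb{Q}$-linear map $D:\mathbb{K} \to \mathbb{K}$ with $D(1)=0$ satisfying the $\phi(n)-1$ equations in (\ref{eq 3.1}) is a $(\sigma,\tau)$-derivation, and $\mathcal{D}_{(\sigma,\tau)}(\mathbb{K})$ is finitely generated over $\mathbb{Q}$ of rank $\phi(n)$ (such a $D$ being uniquely determined, by \th\ref{theorem 3.2}, by the arbitrary choice $D(\zeta) \in \mathbb{K}$, which provides $\phi(n)$ free $\mathbb{Q}$-parameters).

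There is essentially no substantive obstacle: the analytic content already lives in \th\ref{theorem 3.2} (via \th\ref{lemma 3.1}), and this corollary reduces cleanly to \th\ref{corollary 3.4} using only the standard Galois-theoretic facts that cyclotomic extensions are normal over $\mathbb{Q}$ and of degree $\phi(n)$. The only conceivable pitfall is a notational one, namely keeping in mind that the integer called $n$ in \th\ref{theorem 3.2} plays the role of $\phi(n)$ here, while the $n$ of the cyclotomic field is merely the order of the root of unity.
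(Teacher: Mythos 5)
Your proposal is correct and follows exactly the route the paper intends: the paper presents \th\ref{corollary 3.5} as an immediate consequence of \th\ref{theorem 3.2} (via \th\ref{corollary 3.4}), using precisely the facts you verify, namely that $\zeta$ is an algebraic integer, $\mathbb{Q}(\zeta)/\mathbb{Q}$ is normal of degree $\phi(n)$ with power basis $\{1, \zeta, \ldots, \zeta^{\phi(n)-1}\}$, and the automorphisms fix $\mathbb{Q}$. Your closing remark about the notational clash (the ambient-degree $n$ of \th\ref{theorem 3.2} versus the cyclotomic $n$) is also the right caveat, since the number of equations in (\ref{eq 3.1}) here is $\phi(n)-1$.
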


Note that the ring of algebraic integers of an algebraic number field can be considered a $\mathbb{Z}$-algebra.

\begin{corollary}\th\label{corollary 3.6}
Let $\mathbb{K}$ be a monogenic number field that is a normal extension of $\mathbb{Q}$, and let $\sigma, \tau: O_{\mathbb{K}} \rightarrow O_{\mathbb{K}}$ be two different $\mathbb{Z}$-algebra endomorphisms. Then every $\mathbb{Z}$-linear map $D:O_{\mathbb{K}} \rightarrow O_{\mathbb{K}}$ with $D(1) = 0$ and satisfying the $n-1$ equations (\ref{eq 3.1}) is a $(\sigma, \tau)$-derivation. Therefore, the $\mathbb{Z}$-module $\mathcal{D}_{(\sigma, \tau)}(O_{\mathbb{K}})$ is finitely generated of rank $[\mathbb{K}:\mathbb{Q}]$, which is also the rank of the free abelian (additive) group $O_{\mathbb{K}}$.
\end{corollary}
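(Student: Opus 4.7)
The plan is to derive Corollary 3.6 as a direct specialization of Theorem 3.2, with the monogenic and normality hypotheses chosen precisely so that the setup of Section 3 applies with $A = \mathbb{Z}$ and $B = O_{\mathbb{K}}$. First I would record the consequences of the hypotheses: since $\mathbb{K}$ is monogenic, there is some algebraic integer $\theta$ with $O_{\mathbb{K}} = \mathbb{Z}[\theta]$; its minimal polynomial $f(x) \in \mathbb{Z}[x]$ is monic of degree $n = [\mathbb{K}:\mathbb{Q}]$, so $\theta$ is integral over $\mathbb{Z}$ and $\{1, \theta, \ldots, \theta^{n-1}\}$ is a $\mathbb{Z}$-basis of $O_{\mathbb{K}}$. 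Since $O_{\mathbb{K}} \subseteq \mathbb{K}$, $B = O_{\mathbb{K}}$ is an integral domain, and its quotient field is $\mathbb{K}$. The normality of $\mathbb{K}/\mathbb{Q}$ is what allows me to take the minimal splitting field of $\theta$ over $\mathbb{K}$ to equal $\mathbb{K}$ itself, so $\mathbb{E} = \mathbb{K}$.

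Next I would promote the two $\mathbb{Z}$-algebra endomorphisms $\sigma, \tau$ of $O_{\mathbb{K}}$ to the role they play in Theorem 3.2: each is a ring homomorphism $O_{\mathbb{K}} \to \mathbb{E}$ fixing $\mathbb{Z} = A$ elementwise, and the two are distinct by assumption. With this dictionary in place, Theorem 3.2 applies verbatim: any $\mathbb{Z}$-linear map $D: O_{\mathbb{K}} \to \mathbb{E}$ with $D(1) = 0$ which satisfies the $n-1$ equations (\ref{eq 3.1}) is a $(\sigma, \tau)$-derivation. A $\mathbb{Z}$-linear map into $O_{\mathbb{K}}$ is in particular such a map into $\mathbb{E}$, so the first assertion of the corollary is immediate.

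For the rank statement, I would combine Theorem 3.2 with the forward direction in Lemma 2.2 to obtain an explicit $\mathbb{Z}$-module isomorphism $\mathcal{D}_{(\sigma, \tau)}(O_{\mathbb{K}}) \xrightarrow{\sim} O_{\mathbb{K}}$ given by $D \mapsto D(\theta)$. Injectivity follows because Lemma 2.2 forces $D(\theta^k)$ to be a universal expression in $D(\theta)$, so $D(\theta) = 0$ kills $D$ on the $\mathbb{Z}$-basis $\{1, \theta, \ldots, \theta^{n-1}\}$ and hence on all of $O_{\mathbb{K}}$. For surjectivity I would take an arbitrary $\gamma \in O_{\mathbb{K}}$, declare $D(1) = 0$ and $D(\theta^i) = \bigl(\sum_{(s,t) \in S_{i-1}} \sigma(\theta)^s \tau(\theta)^t\bigr)\gamma$ for $1 \le i \le n-1$, extend $\mathbb{Z}$-linearly, and invoke Theorem 3.2 to conclude that this is a $(\sigma,\tau)$-derivation. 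The key observation here is that the image automatically lies in $O_{\mathbb{K}}$, because $\sigma(\theta), \tau(\theta), \gamma \in O_{\mathbb{K}}$ and $O_{\mathbb{K}}$ is closed under the ring operations. Since $O_{\mathbb{K}}$ is a free abelian group of rank $[\mathbb{K}:\mathbb{Q}]$ (a standard fact for rings of integers), the same holds for $\mathcal{D}_{(\sigma,\tau)}(O_{\mathbb{K}})$.

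There is no substantive obstacle; the only thing to be mildly careful about is the identification $\mathbb{E} = \mathbb{K}$, which genuinely requires the normality hypothesis (otherwise $\mathbb{E}$ could strictly contain $\mathbb{K}$ and one could not guarantee that the resulting derivation has image in $O_{\mathbb{K}}$). Everything else is a bookkeeping translation from the integral-extension formalism of Theorem 3.2 to the algebraic-number-theoretic vocabulary of the corollary.
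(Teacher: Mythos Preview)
Your proposal is correct and follows precisely the approach the paper intends: the paper presents Corollary~3.6 as an immediate consequence of Theorem~3.2 with no separate proof, and your argument is exactly the specialization $A=\mathbb{Z}$, $B=O_{\mathbb{K}}=\mathbb{Z}[\theta]$, $\mathbb{E}=\mathbb{K}$ that the setup of Section~3 anticipates. Your treatment of the rank assertion via the bijection $D\mapsto D(\theta)$ is more explicit than anything the paper writes down, but it is the natural reading of the paper's parenthetical ``Therefore'' clauses in Corollaries~3.3--3.7.
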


\begin{corollary}\th\label{corollary 3.7}
Let $\mathbb{K}$ be an $n^{\text{th}}$-cyclotomic number field and $\sigma, \tau:O_{\mathbb{K}} \rightarrow O_{\mathbb{K}}$ be two different $\mathbb{Z}$-algebra endomorphisms. Then every $\mathbb{Z}$-linear map $D:O_{\mathbb{K}} \rightarrow O_{\mathbb{K}}$ with $D(1) = 0$ and satisfying the $n-1$ equations (\ref{eq 3.1}) is a $(\sigma, \tau)$-derivation. Therefore, the $\mathbb{Z}$-module $\mathcal{D}_{(\sigma, \tau)}(O_{\mathbb{K}})$ is finitely generated of rank $\phi(n)$, which is also the rank of the free abelian (additive) group $O_{\mathbb{K}}$.
\end{corollary}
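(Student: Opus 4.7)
The plan is to derive this corollary as a direct specialization of \th\ref{theorem 3.2} to the setting $A=\mathbb{Z}$, $B=O_{\mathbb{K}}$, $\theta=\zeta$, and $\mathbb{E}=\mathbb{K}$. First I would invoke the standard fact from algebraic number theory that the ring of integers of the $n^{\text{th}}$-cyclotomic field $\mathbb{K}=\mathbb{Q}(\zeta)$ is the monogenic ring $O_{\mathbb{K}}=\mathbb{Z}[\zeta]$, and that the minimal polynomial of $\zeta$ over $\mathbb{Z}$ is the cyclotomic polynomial $\Phi_{n}(x)$, which is monic of degree $\phi(n)$. This gives the integral basis $\{1,\zeta,\zeta^{2},\ldots,\zeta^{\phi(n)-1}\}$ of the exact form required by \th\ref{theorem 3.2}; in particular, what the statement calls the ``$n-1$ equations'' should be read as the $\phi(n)-1$ instances of (\ref{eq 3.1}) for $i\in\{1,\ldots,\phi(n)-1\}$, consistent with the rank claim at the end.

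Next I would check the splitting-field and homomorphism hypotheses of \th\ref{theorem 3.2}. Since $\mathbb{K}$ is a Galois (hence normal) extension of $\mathbb{Q}$ already containing every primitive $n^{\text{th}}$ root of unity, it contains every root of $\Phi_{n}(x)$; therefore the minimal splitting field of $\zeta$ over $\mathbb{K}$ coincides with $\mathbb{K}$ itself, so we may take $\mathbb{E}=\mathbb{K}$. The two maps $\sigma,\tau:O_{\mathbb{K}}\to O_{\mathbb{K}}$ are $\mathbb{Z}$-algebra endomorphisms and hence fix $\mathbb{Z}$ element-wise; composing with the inclusion $O_{\mathbb{K}}\hookrightarrow\mathbb{K}$ yields two distinct ring homomorphisms $B\to\mathbb{E}$ that fix $A=\mathbb{Z}$, as required. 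With all hypotheses verified, \th\ref{theorem 3.2} immediately yields that any $\mathbb{Z}$-linear $D:O_{\mathbb{K}}\to O_{\mathbb{K}}$ with $D(1)=0$ and satisfying (\ref{eq 3.1}) is a $(\sigma,\tau)$-derivation.

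For the rank assertion, I would finish by combining this with \th\ref{lemma 2.2}: every $(\sigma,\tau)$-derivation $D$ is completely determined by the single value $D(\zeta)$, since $D(1)=0$ and $D(\zeta^{k})$ for $2\le k\le\phi(n)-1$ are forced by (\ref{eq 3.1}), while conversely the forward direction just proved shows that any choice of $D(\zeta)\in O_{\mathbb{K}}$ extends to a $(\sigma,\tau)$-derivation. This gives a $\mathbb{Z}$-module isomorphism $\mathcal{D}_{(\sigma,\tau)}(O_{\mathbb{K}})\xrightarrow{\sim}O_{\mathbb{K}}$ via $D\mapsto D(\zeta)$, and since $O_{\mathbb{K}}$ is free abelian of rank $\phi(n)$, so is $\mathcal{D}_{(\sigma,\tau)}(O_{\mathbb{K}})$. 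There is no real obstacle here; the only subtlety is the notational clash between the cyclotomic index $n$ in the statement and the $n$ used in \th\ref{theorem 3.2} for the rank of $B$, which in the present application must be interpreted as $\phi(n)$.
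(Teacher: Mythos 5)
Your proposal is correct and follows essentially the same route as the paper, which presents \th\ref{corollary 3.7} as an immediate specialization of \th\ref{theorem 3.2} with $A=\mathbb{Z}$, $B=O_{\mathbb{K}}=\mathbb{Z}[\zeta]$, $\theta=\zeta$, and $\mathbb{E}=\mathbb{K}$ (the splitting field of $\Phi_{n}$), the rank statement coming from the bijection $D\mapsto D(\zeta)$ exactly as you describe. Your observation that the ``$n-1$ equations'' should be read as the $\phi(n)-1$ instances of (\ref{eq 3.1}) is a correct and worthwhile clarification of the statement's notation.
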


\section{Inner $(\sigma, \tau)$-Derivations of the Ring of Algebraic Integers of Cyclotomic Number Fields}\label{section 4}
Let $\mathbb{K} = \mathbb{Q}(\zeta)$ be an $n^{\text{th}}$-cyclotomic number field. Then $O_{\mathbb{K}} = \mathbb{Z}[\zeta]$ is its ring of algebraic integers. Let $\sigma, \tau:O_{\mathbb{K}} \rightarrow O_{\mathbb{K}}$ be two different non-zero $\mathbb{Z}$-algebra endomorphisms. Then $\sigma(\zeta) = \zeta^{u}$ and $\tau(\zeta) = \zeta^{v}$ for some $u, v \in U(n)$ with $u \neq v$, where $U(n) = \{x \in \mathbb{N} \mid x < n ~ \& ~ \text{gcd}(x,n) = 1\}$. In this section, we conjecture a characterization for a $(\sigma, \tau)$-derivation $D:O_{\mathbb{K}} \rightarrow O_{\mathbb{K}}$ to be inner. We do that for two different forms of $n$, namely, $n=2^{r}p$ ($r \in \mathbb{N}$) for an odd rational prime $p$ and $n = p^{k}$ ($k \in \mathbb{N} \setminus \{1\}$) for any rational prime $p$. Note that $\sigma(\zeta), \tau(\zeta) \in \{\zeta^{x} \mid x \in U(n)\}$ so that there are $\phi(n)(\phi(n)-1)$ distinct possibilities for the pair $(\sigma(\zeta), \tau(\zeta))$. Also, note that if $D:O_{\mathbb{K}} \rightarrow O_{\mathbb{K}}$ is an inner $(\sigma, \tau)$-derivation, then by symmetry, $D$ is also an inner $(\tau, \sigma)$-derivation. Therefore, we just need to consider $\frac{\phi(n)(\phi(n)-1)}{2}$ distinct possibilities for the pair $(\sigma(\zeta), \tau(\zeta))$ from the calculation point of view.

$\mathbb{K}$ has degree $\phi(n)$ over $\mathbb{Q}$, where $\phi$ is Euler's phi function. Let $\beta = \sum_{i=0}^{\phi(n)-1} b_{i} \zeta^{i} \in \mathbb{K}$, and $D: O_{\mathbb{K}} \rightarrow O_{\mathbb{K}}$ be a $(\sigma, \tau)$-derivation and $D(\zeta) = \sum_{i=0}^{\phi(n)-1} c_{i} \zeta^{i} \in O_{\mathbb{K}}$. Put $X^{T} = (b_{0} ~ b_{1} ~ ... ~ b_{\phi(n)-1})$ and $C^{T} = (c_{0} ~ c_{1} ~ ... ~ c_{\phi(n)-1})$. We denote by $\mathbb{Z}^{\phi(n)}$ the set of all $\phi(n) \times 1$ column matrices with entries from $\mathbb{Z}$, and $Adj(A)$ denotes the adjoint of any square matrix $A$ over $\mathbb{Z}$. Note that since $\sigma \neq \tau$, therefore, $D(\zeta) = \beta(\tau - \sigma)(\zeta)$ always has a solution $\beta$ in $\mathbb{K}$. Let $\beta (\tau - \sigma)(\zeta) = \sum_{i=0}^{\phi(n)-1} \left(\sum_{j=0}^{\phi(n)-1} a_{ij}b_{j}\right)\zeta^{i}$ and $A = [a_{ij}]$ be the $\phi(n) \times \phi(n)$ matrix. Then $D(\zeta) = \beta(\tau - \sigma)(\zeta)$ has a solution, $\beta$ in $O_{\mathbb{K}}$ if and only if $AX = C$ has a solution in $\mathbb{Z}^{\phi(n)}$.

\subsection{For $n=2^{r}p$, $r \in \mathbb{N}$ and $p$ an odd rational prime}\label{subsection 4.1}
In this subsection, we conjecture a characterization for a $(\sigma, \tau)$-derivation $D:O_{\mathbb{K}} \rightarrow O_{\mathbb{K}}$ to be inner when $n$ is of the form $n = 2^{r}p$, where $r \in \mathbb{N}$ and $p$ is an odd rational prime. Then the degree of $\mathbb{K}$ over $\mathbb{Q}$ is $\phi(n) = \phi(2^{r}p) = 2^{r-1}(p-1)$.

The following \th\ref{conjecture 4.1} gives the determinant of the coefficient matrix formed in a particular way.

\begin{conjecture}\th\label{conjecture 4.1}
Let $n=2^{r}p$ ($r \in \mathbb{N}$ and $p$ is an odd rational prime). Suppose $\beta = \sum_{i=0}^{2^{r-1}(p-1)-1} b_{i} \zeta^{i} \in O_{\mathbb{K}}$ and $\beta (\tau - \sigma) (\zeta) =  \sum_{i=0}^{2^{r-1}(p-1)-1} \left( \sum_{j=0}^{2^{r-1}(p-1)-1} a_{ij} b_{j} \right) \zeta^{i}$. Suppose that $v-u = 2^{e_{1}}p^{e_{2}}m$, where $m$ is a positive integer, $e_{1}, e_{2}$ are non-negative integers, and $2, p$ do not divide $m$. Then $A = [a_{ij}]$ is a $2^{r-1}(p-1) \times 2^{r-1}(p-1)$ matrix with determinant 
\begin{enumerate}
\item[(i)] $\pm 2^{2^{e_{1}}(p-1)}$ if $1 \leq e_{1} \leq r-1$ and $e_{2} \geq 1$.
\item[(ii)] $\pm p^{2^{r-1}}$ if $e_{1} \geq r$ and $e_{2} = 0$.
\item[(iii)] $\pm 1$ otherwise.
\end{enumerate}
\end{conjecture}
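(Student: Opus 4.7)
The plan is to recognize $A$ as the matrix of the multiplication endomorphism $L_{\zeta^v-\zeta^u}\colon\mathbb{K}\to\mathbb{K}$ with respect to the power basis $\{1,\zeta,\ldots,\zeta^{\phi(n)-1}\}$. The defining relation $\beta(\tau-\sigma)(\zeta)=\sum_i\bigl(\sum_j a_{ij}b_j\bigr)\zeta^i$ together with $(\tau-\sigma)(\zeta)=\zeta^v-\zeta^u$ identifies $A$, up to transpose (which does not affect the determinant), with this multiplication matrix. Consequently
$$\det A=\pm\,N_{\mathbb{K}/\mathbb{Q}}\bigl(\zeta^v-\zeta^u\bigr),$$
so the conjecture reduces to a classical norm computation in a cyclotomic field.

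I would then factor $\zeta^v-\zeta^u=\zeta^u\bigl(\zeta^{v-u}-1\bigr)$. Since $\zeta^u$ is a primitive $n$-th root of unity and, by induction from $\prod_{d\mid n}\Phi_d(0)=-1$, one has $\Phi_n(0)=1$ for $n>1$, it follows that $N_{\mathbb{K}/\mathbb{Q}}(\zeta^u)=\pm 1$. Hence $|\det A|=\bigl|N_{\mathbb{K}/\mathbb{Q}}(\zeta^{v-u}-1)\bigr|$. Writing $w=v-u=2^{e_1}p^{e_2}m$ as in the statement, $\zeta^w$ is a primitive $d$-th root of unity where
$$d=\frac{n}{\gcd(n,w)}=2^{\,r-\min(e_1,r)}\,p^{\,1-\min(e_2,1)}.$$
The tower formula $N_{\mathbb{K}/\mathbb{Q}}=N_{\mathbb{Q}(\zeta^w)/\mathbb{Q}}\circ N_{\mathbb{K}/\mathbb{Q}(\zeta^w)}$, together with the identity $N_{\mathbb{Q}(\eta)/\mathbb{Q}}(\eta-1)=\pm\Phi_d(1)$ for any primitive $d$-th root of unity $\eta$, gives
$$N_{\mathbb{K}/\mathbb{Q}}(\zeta^w-1)=\bigl(\pm\Phi_d(1)\bigr)^{\phi(n)/\phi(d)}.$$

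To finish, I would invoke the classical evaluations $\Phi_d(1)=\ell$ when $d$ is a power of a rational prime $\ell\geq 2$, and $\Phi_d(1)=1$ when $d$ has two or more distinct prime divisors. A preliminary observation is necessary: since $u,v\in U(n)$ are both odd and both coprime to $p$, the difference $w=v-u$ is even, so $e_1\geq 1$; moreover $0<|w|<n$ prevents $e_1\geq r$ and $e_2\geq 1$ from occurring simultaneously. Hence the three cases partition the admissible pairs. In case (i), $d=2^{r-e_1}$, so $\Phi_d(1)=2$ and the exponent $\phi(n)/\phi(d)=2^{r-1}(p-1)/2^{r-e_1-1}=2^{e_1}(p-1)$, producing $\pm 2^{2^{e_1}(p-1)}$. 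In case (ii), $d=p$, $\Phi_p(1)=p$ and the exponent is $2^{r-1}(p-1)/(p-1)=2^{r-1}$, producing $\pm p^{2^{r-1}}$. In case (iii) only $1\leq e_1\leq r-1,\ e_2=0$ remains, giving $d=2^{r-e_1}p$ with two distinct prime divisors; then $\Phi_d(1)=1$ and the answer is $\pm 1$.

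The main obstacle is not technical but conceptual: the conjecture is stated through the explicit entries of $A$, which obscures the fact that $\det A$ is simply the norm of a single cyclotomic element. Once $A$ is identified with the regular representation of $\zeta^v-\zeta^u$, the remainder is bookkeeping with $\Phi_d(1)$ and the norm-tower. I do not foresee needing induction on $r$, nor any direct matrix-level manipulation; the only care required is verifying the parity/coprimality facts that make the three cases exhaustive.
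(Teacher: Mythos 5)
Your proposal is correct, and it is worth emphasizing that it does strictly more than the paper does: the statement is presented there as a conjecture, with no proof at all — the authors only verify it computationally (SAGEMATH) for small pairs $(r,p)$, recording $\det A$ in Tables \ref{table 1} and \ref{table 2} and expressing belief in the general case. Your route is a genuine proof. The defining relation for $a_{ij}$ is exactly the coordinate expression of the multiplication map $\beta\mapsto\beta(\zeta^{v}-\zeta^{u})$ in the power basis, so $\det A=\pm N_{\mathbb{K}/\mathbb{Q}}(\zeta^{v}-\zeta^{u})$; factoring out the unit $\zeta^{u}$ and using the tower formula reduces this to $\pm\Phi_{d}(1)^{\phi(n)/\phi(d)}$ with $d=n/\gcd(n,v-u)$, and the classical values $\Phi_{\ell^{k}}(1)=\ell$, $\Phi_{d}(1)=1$ for $d$ with at least two prime divisors, finish the computation. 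Your two housekeeping observations are precisely what makes the case split match the conjecture as stated: since $u,v\in U(2^{r}p)$ are odd, $e_{1}\geq 1$ (otherwise an unlisted case $e_{1}=0$, $e_{2}\geq 1$ with determinant $\pm 2^{p-1}$ would contradict item (iii)), and $0<|v-u|<n$ excludes $e_{1}\geq r$ together with $e_{2}\geq 1$ (which would give $d=1$ and norm $0$). The resulting values $2^{2^{e_{1}}(p-1)}$, $p^{2^{r-1}}$, $1$ agree with all the tabulated determinants (e.g.\ $16$, $9$, $1$ for $n=12$), so your argument settles Conjecture~\ref{conjecture 4.1} up to the sign ambiguity it explicitly allows, and explains the numerical data conceptually. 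The only cosmetic point: the statement's normalization ``$m$ a positive integer'' implicitly assumes $v>u$; interchanging $u$ and $v$ multiplies the norm by $\pm 1$ only, so nothing in your argument is affected.
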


The sign of the determinant of the matrix $A$ depends on the order in which the rows of $A$ are formed. As a consequence, we propose another \th\ref{conjecture 4.2} which gives a necessary and sufficient condition for a $(\sigma, \tau)$-derivation $D: O_{\mathbb{K}} \rightarrow O_{\mathbb{K}}$ to be inner.

\begin{conjecture}\th\label{conjecture 4.2}
Under the hypotheses of \th\ref{conjecture 4.1}, the following statements hold for a $(\sigma, \tau)$-derivation $D:O_{\mathbb{K}} \rightarrow O_{\mathbb{K}}$ with $D(\zeta) = \sum_{i=0}^{2^{r-1}(p-1)-1} c_{i} \zeta^{i} \in O_{\mathbb{K}}$.
\begin{enumerate}
\item[(i)] If $1 \leq e_{1} \leq r-1$ and $e_{2} \geq 1$, then $D$ is inner if and only if $\frac{1}{2^{2^{e_{1}}(p-1)}}(Adj(A)C) \in \mathbb{Z}^{2^{r}(p-1)}$. In particular, if $2^{2^{e_{1}}(p-1)}$ divides $c_{i}$ for each $i \in \{0, 1, ..., 2^{r-1}(p-1)-1\}$, then $D$ is inner. In fact, if $2$ divides $c_{i}$ for each $i \in \{0, 1, ..., 2^{r-1}(p-1)-1\}$, then $D$ is inner.
\item[(ii)] If $e_{1} \geq r$ and $e_{2} = 0$, then $D$ is inner if and only if $\frac{1}{p^{2^{r-1}}}(Adj(A)C) \in \mathbb{Z}^{2^{r}(p-1)}$. In particular, if $p^{2^{r-1}}$ divides $c_{i}$ for each $i \in \{0, 1, ..., 2^{r-1}(p-1)-1\}$, then $D$ is inner. In fact, if $p$ divides $c_{i}$ for each $i \in \{0, 1, ..., 2^{r-1}(p-1)-1\}$, then $D$ is inner.
\item[(iii)] In all other cases, $D$ is always inner.
\end{enumerate}
\end{conjecture}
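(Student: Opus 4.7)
The plan is to recast the problem via \th\ref{lemma 2.6}: $D$ is inner iff there exists $\beta \in O_{\mathbb{K}}$ with $D(\zeta) = \beta(\tau-\sigma)(\zeta)$, which in the $\mathbb{Z}$-basis $\{1, \zeta, \ldots, \zeta^{\phi(n)-1}\}$ of $O_{\mathbb{K}} = \mathbb{Z}[\zeta]$ is precisely the linear system $AX = C$ admitting a solution $X \in \mathbb{Z}^{\phi(n)}$. Because $\det A \neq 0$ by \th\ref{conjecture 4.1}, Cramer's rule yields the unique rational solution
\begin{equation*}
X = \frac{1}{\det A}\,\mathrm{Adj}(A)\,C,
\end{equation*}
so $D$ is inner iff this vector lies in $\mathbb{Z}^{\phi(n)}$. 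Substituting the three values $\pm 2^{2^{e_{1}}(p-1)}$, $\pm p^{2^{r-1}}$, $\pm 1$ of $\det A$ from \th\ref{conjecture 4.1} then proves the three biconditionals at the head of parts (i), (ii), (iii); case (iii) is automatic since $\mathrm{Adj}(A)\,C$ is already integral. The \emph{In particular} clauses are immediate: if $|\det A|$ divides each $c_{i}$, it divides every entry of $\mathrm{Adj}(A)\,C$.

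The real substance is in the stronger \emph{In fact} clauses. Here I exploit that $A$ is the matrix of multiplication by $\gamma := (\tau-\sigma)(\zeta)$ on $\mathbb{K}$, so under the identification $\mathbb{Z}^{\phi(n)} \leftrightarrow O_{\mathbb{K}}$ the image $A(\mathbb{Z}^{\phi(n)})$ is the principal ideal $\gamma O_{\mathbb{K}}$. The claim ``$2 \mid c_{i}$ for all $i \Rightarrow D$ is inner'' is therefore equivalent to $2 O_{\mathbb{K}} \subseteq \gamma O_{\mathbb{K}}$, i.e., $\gamma \mid 2$ in $O_{\mathbb{K}}$; the analogous reduction handles the $p$-case. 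Writing $\gamma = \zeta^{u}(\zeta^{v-u} - 1)$ and using that $\zeta^{u}$ is a unit of $O_{\mathbb{K}}$, the task reduces to showing $(\zeta^{v-u} - 1) \mid 2$ in case (i) and $(\zeta^{v-u} - 1) \mid p$ in case (ii).

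Computing $\gcd(v-u, n) = 2^{e_{1}}p$ in case (i) and $2^{r}$ in case (ii), the element $\zeta^{v-u}$ is, respectively, a primitive $2^{r-e_{1}}$-th and a primitive $p$-th root of unity. Two standard facts about cyclotomic integers — the identity $\Phi_{m}(1) = \prod_{t \in U(m)}(1-\zeta_{m}^{t})$ and the unit property of $(1-\zeta_{m}^{t})/(1-\zeta_{m})$ when $\gcd(t,m) = 1$ — show that all primitive $m$-th roots produce associate elements $1 - \zeta_{m}^{t}$ in $\mathbb{Z}[\zeta_{m}]$, so $(\zeta^{v-u} - 1)^{\phi(m)}$ equals a unit times $\Phi_{m}(1)$. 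Since $\Phi_{2^{r-e_{1}}}(1) = 2$ and $\Phi_{p}(1) = p$ (by \th\ref{lemma 2.8}), this forces $(\zeta^{v-u} - 1)$ to divide $2$ in case (i) and $p$ in case (ii), inside $\mathbb{Z}[\zeta_{m}] \subseteq O_{\mathbb{K}}$, completing the argument.

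The principal obstacle is the reliance on the unproved \th\ref{conjecture 4.1}. A self-contained route would identify $\det A = N_{\mathbb{K}/\mathbb{Q}}(\gamma) = \pm N_{\mathbb{K}/\mathbb{Q}}(\zeta^{v-u} - 1)$ and compute $N_{\mathbb{K}/\mathbb{Q}}(\zeta^{v-u} - 1) = \pm \Phi_{m}(1)^{\phi(n)/\phi(m)}$ where $m = n/\gcd(n, v-u)$; a case analysis of $(e_{1}, e_{2})$ then produces the three claimed values (and shows they exhaust all possibilities, since $u,v \in U(n)$ forces $e_{1} \geq 1$ and rules out the degenerate combination $e_{1}\ge r$, $e_{2}\ge 1$). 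The residual sign ambiguity noted by the authors is a harmless bookkeeping issue arising from the row-ordering of $A$, which can be absorbed into $\mathrm{Adj}(A)$ without affecting the integrality of $X$.
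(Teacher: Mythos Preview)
The paper does not prove this statement: it is explicitly a conjecture, supported only by SAGEMATH verification for a few small pairs $(r,p)$ (the tables following the statement). There is thus no ``paper's own proof'' to compare against.

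Your argument, by contrast, is a genuine proof. The reduction via \th\ref{lemma 2.6} and Cramer's rule to the integrality of $(\det A)^{-1}\,\mathrm{Adj}(A)\,C$ is sound, and the three biconditionals follow at once from the values of $\det A$ supplied by \th\ref{conjecture 4.1}. Your handling of the \emph{In fact} clauses is the real content and is also correct: identifying $A(\mathbb{Z}^{\phi(n)})$ with the principal ideal $\gamma O_{\mathbb{K}}$, where $\gamma=(\tau-\sigma)(\zeta)$, reduces the claim to $\gamma \mid 2$ (respectively $\gamma \mid p$) in $O_{\mathbb{K}}$, and you establish this via the standard fact that $(1-\zeta_{m})^{\phi(m)}$ is an associate of $\Phi_{m}(1)$ in $\mathbb{Z}[\zeta_{m}]\subseteq O_{\mathbb{K}}$, together with $\Phi_{2^{s}}(1)=2$ and $\Phi_{p}(1)=p$.

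More than that, your final paragraph actually proves \th\ref{conjecture 4.1} as well. Since $A$ is the matrix of multiplication by $\gamma$ in the power basis, $\det A = N_{\mathbb{K}/\mathbb{Q}}(\gamma) = \pm N_{\mathbb{K}/\mathbb{Q}}(1-\zeta^{v-u})$, and transitivity of the norm gives $N_{\mathbb{K}/\mathbb{Q}}(1-\omega) = \Phi_{m}(1)^{\phi(n)/\phi(m)}$ with $m = n/\gcd(n,v-u)$ and $\omega=\zeta^{v-u}$. Your case analysis (noting that $e_{1}\geq 1$ always since $u,v$ are odd, and that $e_{1}\geq r$ together with $e_{2}\geq 1$ is vacuous because $|v-u|<n$) then yields exactly the three values $\pm 2^{2^{e_{1}}(p-1)}$, $\pm p^{2^{r-1}}$, $\pm 1$. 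You have therefore proved both conjectures, whereas the paper proves neither.
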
 

We have verified the \th\ref{conjecture 4.1} and \th\ref{conjecture 4.2} using SageMath for various forms of $n$ as given below. In each case, explicit matrix $A$, $\text{det}(A)$ (determinant of $A$), and the last column $C'$ of the row reduced echelon form $(I ~ C')$ of the augmented matrix $(A ~ C)$ were found for all possible values of the pair $(\sigma(\zeta), \tau(\zeta))$. Nevertheless, we believe the conjectures to be true for all $r \in \mathbb{N}$ and for all odd rational primes $p$. 
\begin{enumerate}
\item[(i)] When $n=2p$ and $p=3, 5, 7$, that is, for the pairs $(r,p) = (1,3), (1,5), (1,7)$; in this article, we record our observations for the primes $p=3$ and $p=5$ in the tables \ref{table 1} and \ref{table 2}.

\item[(ii)] When $n=4p$ and $p=3, 5, 7$, that is, for the pairs $(r,p) = (2,3), (2,5), (2,7)$; we record our observations for the prime $p=3$ in the table \ref{table 2}. 

\item[(iii)] When $n=8p$ and $p=3, 5$, that is, for the pairs $(r,p) = (3,3), (3,5)$; we record our observations for the prime $p=3$ in the table \ref{table 2}. 

\item[(iv)] When $n=16p$ and $p=3$, that is, for the pair $(r,p) = (4,3)$.
\end{enumerate}

As $r$ and $p$ increase, the size of the matrix $A$ ($\phi(n) \times \phi(n)$, where $n=2^{r}p$) increases, and it becomes computationally very tedious for us to deal with matrices of such enormous size.

\begin{longtable}{|c|c|c|}
\caption{\textbf{For} $\bm{(r,p) = (1,5),}$ $\bm{n = 2^{r}p = 10,}$ $\textbf{det}\bm{(A) = 5,}$ $\bm{\phi(10) = |\{1, 3, 7, 9\}| = 4}$ \textbf{and} $\bm{\Phi_{10}(x) = x^{4} - x^{3} + x^{2} - x + 1.$}}
\label{table 1} \\
\hline 
$(\sigma(\zeta), \tau(\zeta))$ & $(A ~ C)$ & $C'$ \\ 
\hline 
$(\zeta, \zeta^{3})$ & $\begin{pmatrix}
0 & 1 & 1 & -1 & c_{0} \\
1 & -1 & 0 & 2 & c_{1} \\
0 & 2 & 0 & -1 & c_{2} \\
-1 & -1 & 1 & 1 & c_{3}
\end{pmatrix}$ & $\frac{1}{5} \begin{pmatrix}
3 c_{0} + 2 c_{1} - 2 c_{2} - 3 c_{3} \\
- c_{0} + c_{1} + 4 c_{2} + c_{3} \\
4 c_{0} + c_{1} - c_{2} + c_{3} \\
-2 c_{0} + c_{1} + 3 c_{2} + 2 c_{3}
\end{pmatrix}$ \\ 
\hline 
$(\zeta, \zeta^{7})$ & $\begin{pmatrix}
0 & 0 & -1 & -2 & c_{0} \\
1 & 0 & 1 & 1 & c_{1} \\
1 & 1 & -1 & -1 & c_{2} \\
0 & 1 & 2 & 1 & c_{3} \\
\end{pmatrix}$ & $\frac{1}{5} \begin{pmatrix}
c_{0} + 4 c_{1} + c_{2} - c_{3} \\
-2 c_{0} - 3 c_{1} + 3 c_{2} + 2 c_{3} \\
3 c_{0} + 2 c_{1} - 2 c_{2} + 2 c_{3} \\
-4 c_{0} - c_{1} + c_{2} - c_{3}
\end{pmatrix}$ \\ 
\hline 
$(\zeta, \zeta^{9})$ & $\begin{pmatrix}
-1 & -1 & 0 & -1 & c_{0} \\
2 & 0 & -1 & 1 & c_{1} \\
-1 & 1 & 0 & -2 & c_{2} \\
1 & 0 & 1 & 1 & c_{3} \\
\end{pmatrix}$ & $\frac{1}{5} \begin{pmatrix}
2 c_{0} + 3 c_{1} + 2 c_{2} + 3 c_{3} \\
-4 c_{0} - c_{1} + c_{2} - c_{3} \\
c_{0} - c_{1} + c_{2} + 4 c_{3} \\
-3 c_{0} - 2 c_{1} - 3 c_{2} - 2 c_{3}
\end{pmatrix}$ \\ 
\hline 
$(\zeta^{3}, \zeta^{7})$ & $\begin{pmatrix}
0 & -1 & -2 & -1 & c_{0} \\
0 & 1 & 1 & -1 & c_{1} \\
1 & -1 & -1 & 0 & c_{2} \\
1 & 2 & 1 & 0 & c_{3}
\end{pmatrix}$ & $\frac{1}{5} \begin{pmatrix}
- c_{0} + c_{1} + 4 c_{2} + c_{3} \\
2 c_{0} - 2 c_{1} - 3 c_{2} + 3 c_{3} \\
-3 c_{0} + 3 c_{1} + 2 c_{2} - 2 c_{3} \\
- c_{0} - 4 c_{1} - c_{2} + c_{3}
\end{pmatrix}$ \\ 
\hline 
$(\zeta^{3}, \zeta^{9})$ & $\begin{pmatrix}
-1 & -2 & -1 & 0 & c_{0} \\
1 & 1 & -1 & -1 & c_{1} \\
-1 & -1 & 0 & -1 & c_{2} \\
2 & 1 & 0 & 0 & c_{3}
\end{pmatrix}$ & $\frac{1}{5} \begin{pmatrix}
c_{0} - c_{1} + c_{2} + 4 c_{3} \\
-2 c_{0} + 2 c_{1} - 2 c_{2} - 3 c_{3} \\
-2 c_{0} - 3 c_{1} + 3 c_{2} + 2 c_{3} \\
c_{0} - c_{1} - 4 c_{2} - c_{3}
\end{pmatrix}$ \\ 
\hline 
$(\zeta^{7}, \zeta^{9})$ & $\begin{pmatrix}
-1 & -1 & 1 & 1 & c_{0} \\
1 & 0 & -2 & 0 & c_{1} \\
-2 & 0 & 1 & -1 & c_){2} \\
1 & -1 & -1 & 0 & c_{3}
\end{pmatrix}$ & $\frac{1}{5} \begin{pmatrix}
-2 c_{0} - 3 c_{1} - 2 c_{2} + 2 c_{3} \\
- c_{0} + c_{1} - c_{2} - 4 c_{3} \\
- c_{0} - 4 c_{1} - c_{2} + c_{3} \\
3 c_{0} + 2 c_{1} - 2 c_{2} - 3 c_{3}
\end{pmatrix}$ \\
\hline
\end{longtable}

\small\addtolength{\tabcolsep}{0.8pt}
\begin{longtable}{|c|c|c|c|}
\caption{\textbf{For} $\bm{p=3}$ \textbf{and} $\bm{r=1,2,3};$  $\bm{n=2^{r}p.}$}
\label{table 2} \\
\hline
$(\sigma(\zeta), \tau(\zeta))$ & $(A ~ C)$ & $\text{det}(A)$ & $C'$ \\ 
\hline \hline
\multicolumn{4}{|c|}{$\bm{(r,p) = (1,3),}$ $\bm{n = 6,}$ $\bm{\phi(6) = |\{1, 5\}| = 2,}$ $\bm{\Phi_{6}(x) = x^{2} - x + 1.$}} \\
\hline \hline
$(\zeta, \zeta^{5})$ & $\begin{pmatrix}
-1 & -2 & c_{0} \\
2 & 1 & c_{1}
\end{pmatrix}$ & $3$ & $\frac{1}{3}\begin{pmatrix}
c_{0} + c_{1} \\
-c_{0} - c_{1}
\end{pmatrix}$ \\
\hline \hline
\multicolumn{4}{|c|}{$\bm{(r,p) = (2,3),}$ $\bm{n = 12,}$ $\bm{\phi(12) = |\{1, 5, 7, 11\}| = 4,}$ $\bm{\Phi_{12}(x) = x^{4} - x^{2} + 1.$}} \\
\hline \hline
$(\zeta, \zeta^{5})$ & $\begin{pmatrix}
0 & 1 & 0 & -1 & c_{0} \\
2 & 0 & 1 & 0 & c_{1} \\
0 & 1 & 0 & 2 & c_{2} \\
-1 & 0 & 1 & 0 & c_{3}
\end{pmatrix}$ & $9$ & $\frac{1}{3} \begin{pmatrix}
c_{1} - c_{3} \\
2 c_{0} + c_{2} \\
c_{1} + 2 c_{3} \\
- c_{0} + c_{2}
\end{pmatrix}$ \\ 
\hline 
$(\zeta, \zeta^{7})$ & $\begin{pmatrix}
0 & 0 & 0 & -2 & c_{0} \\
2 & 0 & 0 & 0 & c_{1} \\
0 & 2 & 0 & 2 & c_{2} \\
0 & 0 & 2 & 0 & c_{3}
\end{pmatrix}$ & $16$ & $\frac{1}{2} \begin{pmatrix}
c_{1} \\
c_{0} + c_{2} \\
c_{3} \\
- c_{0}
\end{pmatrix}$ \\ 
\hline 
$(\zeta, \zeta^{11})$ & $\begin{pmatrix}
0 & -1 & 0 & -1 & c_{0} \\
0 & 0 & -1 & 0 & c_{1} \\
0 & 1 & 0 & 0 & c_{2} \\
1 & 0 & 1 & 0 & c_{3}
\end{pmatrix}$ & $1$ & $\begin{pmatrix}
c_{1} + c_{3} \\
c_{2} \\
-c_{1} \\
-c_{0} - c_{2}
\end{pmatrix}$ \\ 
\hline 
$(\zeta^{5}, \zeta^{7})$ & $\begin{pmatrix}
0 & -1 & 0 & -1 & c_{0} \\
0 & 0 & -1 & 0 & c_{1} \\
0 & 1 & 0 & 0 & c_{2} \\
1 & 0 & 1 & 0 & c_{3}
\end{pmatrix}$ & $1$ & $\begin{pmatrix}
c_{1} + c_{3} \\
c_{2} \\
-c_{1} \\
-c_{0} - c_{2}
\end{pmatrix}$ \\ 
\hline 
$(\zeta^{5}, \zeta^{11})$ & $\begin{pmatrix}
0 & -2 & 0 & 0 & c_{0} \\
-2 & 0 & -2 & 0 & c_{1} \\
0 & 0 & 0 & -2 & c_{2} \\
2 & 0 & 0 & 0 & c_{3}
\end{pmatrix}$ & $16$ & $\frac{1}{2} \begin{pmatrix}
c_{3} \\
- c_{0} \\
- c_{1} - c_{3} \\
- c_{2}
\end{pmatrix}$ \\ 
\hline 
$(\zeta^{7}, \zeta^{11})$ & $\begin{pmatrix}
0 & -1 & 0 & 1 & c_{0} \\
-2 & 0 & -1 & 0 & c_{1} \\
0 & -1 & 0 & -2 & c_{2} \\
1 & 0 & -1 & 0 & c_{3}
\end{pmatrix}$ & $9$ & $\frac{1}{3} \begin{pmatrix}
- c_{1} + c_{3} \\
-2 c_{0} -  c_{2} \\
- c_{1} - 2 c_{3} \\
c_{0} - c_{2}
\end{pmatrix}$ \\ 
\hline \hline  
\multicolumn{4}{|c|}{$\bm{(r,p) = (3,3),}$ $\bm{n=24,}$ $\bm{\phi(24) = |\{1, 5, 7, 11, 13, 17, 19, 23\}| = 8,}$ $\bm{\Phi_{24}(x) = x^{8} - x^{4} + 1.$}} \\
\hline \hline 
$(\zeta, \zeta^{5})$ & $\begin{pmatrix}
0 & 0 & 0 & 1 & 0 & 0 & 0 & 0 & c_{0} \\
1 & 0 & 0 & 0 & 1 & 0 & 0 & 0 & c_{1} \\
0 & 1 & 0 & 0 & 0 & 1 & 0 & 0 & c_{2} \\
0 & 0 & 1 & 0 & 0 & 0 & 1 & 0 & c_{3} \\
0 & 0 & 0 & 0 & 0 & 0 & 0 & 1 & c_{4} \\
-1 & 0 & 0 & 0 & 0 & 0 & 0 & 0 & c_{5} \\
0 & -1 & 0 & 0 & 0 & 0 & 0 & 0 & c_{6} \\
0 & 0 & -1 & 0 & 0 & 0 & 0 & 0 & c_{7}
\end{pmatrix}$ & $1$ & $\begin{pmatrix}
-c_{5} \\
-c_{6} \\
-c_{7} \\
c_{0} \\
c_{1} + c_{5} \\
c_{2} + c_{6} \\
c_{3} + c_{7} \\
c_{4}
\end{pmatrix}$ \\ 
\hline 
$(\zeta, \zeta^{7})$ & $\begin{pmatrix}
0 & 1 & 0 & 0 & 0 & 1 & 0 & -1 & c_{0} \\
1 & 0 & 1 & 0 & 0 & 0 & 1 & 0 & c_{1} \\
0 & 1 & 0 & 1 & 0 & 0 & 0 & 1 & c_{2} \\
0 & 0 & 1 & 0 & 1 & 0 & 0 & 0 & c_{3} \\
0 & -1 & 0 & 1 & 0 & 0 & 0 & 1 & c_{4} \\
0 & 0 & -1 & 0 & 1 & 0 & 0 & 0 & c_{5} \\
0 & 0 & 0 & -1 & 0 & 1 & 0 & 0 & c_{6} \\
-1 & 0 & 0 & 0 & -1 & 0 & 1 & 0 & c_{7}
\end{pmatrix}$ & $16$ & $\frac{1}{2} \begin{pmatrix}
c_{1} - c_{3} - c_{7} \\
c_{2} - c_{4} \\
c_{3} - c_{5} \\
c_{0} + c_{4} - c_{6} \\
c_{3} + c_{5} \\
c_{0} + c_{4} + c_{6} \\
c_{1} + c_{5} + c_{7} \\
-c_{0} + c_{2} + c_{6}
\end{pmatrix}$ \\ 
\hline 
$(\zeta, \zeta^{11})$ & $\begin{pmatrix}
0 & 1 & 0 & 0 & 0 & 0 & 0 & -1 & c_{0} \\
1 & 0 & 1 & 0 & 0 & 0 & 0 & 0 & c_{1} \\
0 & 1 & 0 & 1 & 0 & 0 & 0 & 0 & c_{2} \\
1 & 0 & 1 & 0 & 1 & 0 & 0 & 0 & c_{3} \\
0 & 0 & 0 & 1 & 0 & 1 & 0 & 1 & c_{4} \\
0 & 0 & 0 & 0 & 1 & 0 & 1 & 0 & c_{5} \\
0 & 0 & 0 & 0 & 0 & 1 & 0 & 1 & c_{6} \\
-1 & 0 & 0 & 0 & 0 & 0 & 1 & 0 & c_{7}
\end{pmatrix}$ & $1$ & $\begin{pmatrix}
c_{1} - c_{3} + c_{5} - c_{7} \\
c_{2} - c_{4} + c_{6} \\
c_{3} - c_{5} + c_{7} \\
c_{4} - c_{6} \\
-c_{1} + c_{3} \\
c_{0} - c_{2} + c_{4} \\
c_{1} - c_{3} + c_{5} \\
-c_{0} + c_{2} - c_{4} + c_{6}
\end{pmatrix}$ \\ 
\hline 
$(\zeta, \zeta^{13})$ & $\begin{pmatrix}
0 & 0 & 0 & 0 & 0 & 0 & 0 & -2 & c_{0} \\
2 & 0 & 0 & 0 & 0 & 0 & 0 & 0 & c_{1} \\
0 & 2 & 0 & 0 & 0 & 0 & 0 & 0 & c_{2} \\
0 & 0 & 2 & 0 & 0 & 0 & 0 & 0 & c_{3} \\
0 & 0 & 0 & 2 & 0 & 0 & 0 & 2 & c_{4} \\
0 & 0 & 0 & 0 & 2 & 0 & 0 & 0 & c_{5} \\
0 & 0 & 0 & 0 & 0 & 2 & 0 & 0 & c_{6} \\
0 & 0 & 0 & 0 & 0 & 0 & 2 & 0 & c_{7}
\end{pmatrix}$ & $256$ & $\frac{1}{2} \begin{pmatrix}
c_{1} \\
c_{2} \\
c_{3} \\
-c_{0} + c_{4} \\
c_{5} \\
c_{6} \\
c_{7} \\
c_{0}
\end{pmatrix}$ \\ 
\hline 
$(\zeta, \zeta^{17})$ & $\begin{pmatrix}
0 & 0 & 0 & -1 & 0 & 0 & 0 & -2 & c_{0} \\
1 & 0 & 0 & 0 & -1 & 0 & 0 & 0 & c_{1} \\
0 & 1 & 0 & 0 & 0 & -1 & 0 & 0 & c_{2} \\
0 & 0 & 1 & 0 & 0 & 0 & -1 & 0 & c_{3} \\
0 & 0 & 0 & 2 & 0 & 0 & 0 & 1 & c_{4} \\
1 & 0 & 0 & 0 & 2 & 0 & 0 & 0 & c_{5} \\
0 & 1 & 0 & 0 & 0 & 2 & 0 & 0 & c_{6} \\
0 & 0 & 1 & 0 & 0 & 0 & 2 & 0 & c_{7}
\end{pmatrix}$ & $81$ & $\frac{1}{3} \begin{pmatrix}
2 c_{1} + c_{5} \\
2 c_{2} + c_{6} \\
2 c_{3} + c_{7} \\
c_{0} + 2 c_{4} \\
-c_{1} + c_{5} \\
-c_{2} + c_{6} \\
-c_{3} + c_{7} \\
-2 c_{0} - c_{4}
\end{pmatrix}$ \\ 
\hline 
$(\zeta, \zeta^{19})$ & $\begin{pmatrix}
0 & -1 & 0 & 0 & 0 & -1 & 0 & -1 & c_{0} \\
1 & 0 & -1 & 0 & 0 & 0 & -1 & 0 & c_{1} \\
0 & 1 & 0 & -1 & 0 & 0 & 0 & -1 & c_{2} \\
0 & 0 & 1 & 0 & -1 & 0 & 0 & 0 & c_{3} \\
0 & 1 & 0 & 1 & 0 & 0 & 0 & 1 & c_{4} \\
0 & 0 & 1 & 0 & 1 & 0 & 0 & 0 & c_{5} \\
0 & 0 & 0 & 1 & 0 & 1 & 0 & 0 & c_{6} \\
1 & 0 & 0 & 0 & 1 & 0 & 1 & 0 & c_{7}
\end{pmatrix}$ & $16$ & $\frac{1}{2} \begin{pmatrix}
c_{1} + c_{3} + c_{7} \\
c_{2} + c_{4} \\
c_{3} + c_{5} \\
c_{0} + c_{4} + c_{6} \\
-c_{3} + c_{5} \\
-c_{0} - c_{4} + c_{6} \\
-c_{1} - c_{5} + c_{7} \\
-c_{0} - c_{2} - c_{6}
\end{pmatrix}$ \\ 
\hline 
$(\zeta, \zeta^{23})$ & $\begin{pmatrix}
0 & -1 & 0 & 0 & 0 & 0 & 0 & -1 & c_{0} \\
1 & 0 & -1 & 0 & 0 & 0 & 0 & 0 & c_{1} \\
0 & 1 & 0 & -1 & 0 & 0 & 0 & 0 & c_{2} \\
-1 & 0 & 1 & 0 & -1 & 0 & 0 & 0 & c_{3} \\
0 & 0 & 0 & 1 & 0 & -1 & 0 & 1 & c_{4} \\
0 & 0 & 0 & 0 & 1 & 0 & -1 & 0 & c_{5} \\
0 & 0 & 0 & 0 & 0 & 1 & 0 & -1 & c_{6} \\
1 & 0 & 0 & 0 & 0 & 0 & 1 & 0 & c_{7}
\end{pmatrix}$ & $1$ & $\begin{pmatrix}
c_{1} - c_{7} \\
c_{0} + c_{2} \\
c_{1} + c_{3} \\
c_{0} + c_{2} + c_{4} \\
c_{3} + c_{5} + c_{7} \\
c_{4} + c_{6} \\
c_{5} + c_{7} \\
-c_{0} + c_{6}
\end{pmatrix}$ \\ 
\hline 
$(\zeta^{5}, \zeta^{7})$ & $\begin{pmatrix}
0 & 1 & 0 & -1 & 0 & 1 & 0 & -1 & c_{0} \\
0 & 0 & 1 & 0 & -1 & 0 & 1 & 0 & c_{1} \\
0 & 0 & 0 & 1 & 0 & -1 & 0 & 1 & c_{2} \\
0 & 0 & 0 & 0 & 1 & 0 & -1 & 0 & c_{3} \\
0 & -1 & 0 & 1 & 0 & 0 & 0 & 0 & c_{4} \\
1 & 0 & -1 & 0 & 1 & 0 & 0 & 0 & c_{5} \\
0 & 1 & 0 & -1 & 0 & 1 & 0 & 0 & c_{6} \\
-1 & 0 & 1 & 0 & -1 & 0 & 1 & 0 & c_{7}
\end{pmatrix}$ & $1$ & $\begin{pmatrix}
c_{1} - c_{7} \\
c_{0} + c_{2} \\
c_{1} + c_{3} \\
c_{0} + c_{2} + c_{4} \\
c_{3} + c_{5} + c_{7} \\
c_{4} + c_{6} \\
c_{5} + c_{7} \\
-c_{0} + c_{6}
\end{pmatrix}$ \\ 
\hline 
$(\zeta^{5}, \zeta^{11})$ & $\begin{pmatrix}
0 & 1 & 0 & -1 & 0 & 0 & 0 & -1 & c_{0} \\
0 & 0 & 1 & 0 & -1 & 0 & 0 & 0 & c_{1} \\
0 & 0 & 0 & 1 & 0 & -1 & 0 & 0 & c_{2} \\
1 & 0 & 0 & 0 & 1 & 0 & -1 & 0 & c_{3} \\
0 & 0 & 0 & 1 & 0 & 1 & 0 & 0 & c_{4} \\
1 & 0 & 0 & 0 & 1 & 0 & 1 & 0 & c_{5} \\
0 & 1 & 0 & 0 & 0 & 1 & 0 & 1 & c_{6} \\
-1 & 0 & 1 & 0 & 0 & 0 & 1 & 0 & c_{7}
\end{pmatrix}$ & $16$ & $\frac{1}{2} \begin{pmatrix}
c_{1} + c_{5} - c_{7} \\
c_{0} + c_{2} + c_{6} \\
c_{1} + c_{3} + c_{7} \\
c_{2} + c_{4} \\
-c_{1} + c_{3} + c_{7} \\
-c_{2} + c_{4} \\
-c_{3} + c_{5} \\
-c_{0} - c_{4} + c_{6}
\end{pmatrix}$ \\ 
\hline 
$(\zeta^{5}, \zeta^{13})$ & $\begin{pmatrix}
0 & 0 & 0 & -1 & 0 & 0 & 0 & -2 & c_{0} \\
1 & 0 & 0 & 0 & -1 & 0 & 0 & 0 & c_{1} \\
0 & 1 & 0 & 0 & 0 & -1 & 0 & 0 & c_{2} \\
0 & 0 & 1 & 0 & 0 & 0 & -1 & 0 & c_{3} \\
0 & 0 & 0 & 2 & 0 & 0 & 0 & 1 & c_{4} \\
1 & 0 & 0 & 0 & 2 & 0 & 0 & 0 & c_{5} \\
0 & 1 & 0 & 0 & 0 & 2 & 0 & 0 & c_{6} \\
0 & 0 & 1 & 0 & 0 & 0 & 2 & 0 & c_{7}
\end{pmatrix}$ & $81$ & $\frac{1}{3} \begin{pmatrix}
2 c_{1} + c_{5} \\
2 c_{2} + c_{6} \\
2 c_{3} + c_{7} \\
c_{0} + 2 c_{4} \\
-c_{1} + c_{5} \\
-c_{2} + c_{6} \\
-c_{3} + c_{7} \\
-2c_{0} - c_{4}
\end{pmatrix}$ \\ 
\hline 
$(\zeta^{5}, \zeta^{17})$ & $\begin{pmatrix}
0 & 0 & 0 & -2 & 0 & 0 & 0 & -2 & c_{0} \\
0 & 0 & 0 & 0 & -2 & 0 & 0 & 0 & c_{1} \\
0 & 0 & 0 & 0 & 0 & -2 & 0 & 0 & c_{2} \\
0 & 0 & 0 & 0 & 0 & 0 & -2 & 0 & c_{3} \\
0 & 0 & 0 & 2 & 0 & 0 & 0 & 0 & c_{4} \\
2 & 0 & 0 & 0 & 2 & 0 & 0 & 0 & c_{5} \\
0 & 2 & 0 & 0 & 0 & 2 & 0 & 0 & c_{6} \\
0 & 0 & 2 & 0 & 0 & 0 & 2 & 0 & c_{7}
\end{pmatrix}$ & $256$ & $\frac{1}{2} \begin{pmatrix}
c_{1} + c_{5} \\
c_{2} + c_{6} \\
c_{3} + c_{7} \\
c_{4} \\
-c_{1} \\
-c_{2} \\
-c_{3} \\
-c_{0} - c_{4}
\end{pmatrix}$ \\ 
\hline 
$(\zeta^{5}, \zeta^{19})$ & $\begin{pmatrix}
0 & -1 & 0 & -1 & 0 & -1 & 0 & -1 & c_{0} \\
0 & 0 & -1 & 0 & -1 & 0 & -1 & 0 & c_{1} \\
0 & 0 & 0 & -1 & 0 & -1 & 0 & -1 & c_{2} \\
0 & 0 & 0 & 0 & -1 & 0 & -1 & 0 & c_{3} \\
0 & 1 & 0 & 1 & 0 & 0 & 0 & 0 & c_{4} \\
1 & 0 & 1 & 0 & 1 & 0 & 0 & 0 & c_{5} \\
0 & 1 & 0 & 1 & 0 & 1 & 0 & 0 & c_{6} \\
1 & 0 & 1 & 0 & 1 & 0 & 1 & 0 & c_{7} \\
\end{pmatrix}$ & $1$ & $\begin{pmatrix}
c_{1} + c_{7} \\
-c_{0} + c_{2} \\
-c_{1} + c_{3} \\
c_{0} - c_{2} + c_{4} \\
-c_{3} + c_{5} - c_{7} \\
-c_{4} + c_{6} \\
-c_{5} + c_{7} \\
-c_{0} - c_{6}
\end{pmatrix}$ \\ 
\hline 
$(\zeta^{5}, \zeta^{23})$ & $\begin{pmatrix}
0 & -1 & 0 & -1 & 0 & 0 & 0 & -1 & c_{0} \\
0 & 0 & -1 & 0 & -1 & 0 & 0 & 0 & c_{1} \\
0 & 0 & 0 & -1 & 0 & -1 & 0 & 0 & c_{2} \\
-1 & 0 & 0 & 0 & -1 & 0 & -1 & 0 & c_{3} \\
0 & 0 & 0 & 1 & 0 & -1 & 0 & 0 & c_{4} \\
1 & 0 & 0 & 0 & 1 & 0 & -1 & 0 & c_{5} \\
0 & 1 & 0 & 0 & 0 & 1 & 0 & -1 & c_{6} \\
1 & 0 & 1 & 0 & 0 & 0 & 1 & 0 & c_{7}
\end{pmatrix}$ & $16$ & $\frac{1}{2} \begin{pmatrix}
c_{1} + c_{5} + c_{7} \\
-c_{0} + c_{2} + c_{6} \\
-c_{1} + c_{3} + c_{7} \\
-c_{2} + c_{4} \\
-c_{1} - c_{3} - c_{7} \\
-c_{2} - c_{4} \\
-c_{3} - c_{5} \\
-c_{0} - c_{4} - c_{6}
\end{pmatrix}$ \\ 
\hline 
$(\zeta^{7}, \zeta^{11})$ & $\begin{pmatrix}
0 & 0 & 0 & 0 & 0 & -1 & 0 & 0 & c_{0} \\
0 & 0 & 0 & 0 & 0 & 0 & -1 & 0 & c_{1} \\
0 & 0 & 0 & 0 & 0 & 0 & 0 & -1 & c_{2} \\
1 & 0 & 0 & 0 & 0 & 0 & 0 & 0 & c_{3} \\
0 & 1 & 0 & 0 & 0 & 1 & 0 & 0 & c_{4} \\
0 & 0 & 1 & 0 & 0 & 0 & 1 & 0 & c_{5} \\
0 & 0 & 0 & 1 & 0 & 0 & 0 & 1 & c_{6} \\
0 & 0 & 0 & 0 & 1 & 0 & 0 & 0 & c_{7}
\end{pmatrix}$ & $1$ & $\begin{pmatrix}
c_{3} \\
c_{0} + c_{4} \\
c_{1} + c_{5} \\
c_{2} + c_{6} \\
c_{7} \\
-c_{0} \\
-c_{1} \\
-c_{2}
\end{pmatrix}$ \\ 
\hline 
$(\zeta^{7}, \zeta^{13})$ & $\begin{pmatrix}
0 & -1 & 0 & 0 & 0 & -1 & 0 & -1 & c_{0} \\
1 & 0 & -1 & 0 & 0 & 0 & -1 & 0 & c_{1} \\
0 & 1 & 0 & -1 & 0 & 0 & 0 & -1 & c_{2} \\
0 & 0 & 1 & 0 & -1 & 0 & 0 & 0 & c_{3} \\
0 & 1 & 0 & 1 & 0 & 0 & 0 & 1 & c_{4} \\
0 & 0 & 1 & 0 & 1 & 0 & 0 & 0 & c_{5} \\
0 & 0 & 0 & 1 & 0 & 1 & 0 & 0 & c_{6} \\
1 & 0 & 0 & 0 & 1 & 0 & 1 & 0 & c_{7}
\end{pmatrix}$ & $16$ & $\frac{1}{2} \begin{pmatrix}
c_{1} + c_{3} + c_{7} \\
c_{2} + c_{4} \\
c_{3} + c_{5} \\
c_{0} + c_{4} + c_{6} \\
-c_{3} + c_{5} \\
-c_{0} - c_{4} + c_{6} \\
-c_{1} - c_{5} + c_{7} \\
-c_{0} - c_{2} - c_{6}
\end{pmatrix}$ \\ 
\hline 
$(\zeta^{7}, \zeta^{17})$ & $\begin{pmatrix}
0 & -1 & 0 & -1 & 0 & -1 & 0 & -1 & c_{0} \\
0 & 0 & -1 & 0 & -1 & 0 & -1 & 0 & c_{1} \\
0 & 0 & 0 & -1 & 0 & -1 & 0 & -1 & c_{2} \\
0 & 0 & 0 & 0 & -1 & 0 & -1 & 0 & c_{3} \\
0 & 1 & 0 & 1 & 0 & 0 & 0 & 0 & c_{4} \\
1 & 0 & 1 & 0 & 1 & 0 & 0 & 0 & c_{5} \\
0 & 1 & 0 & 1 & 0 & 1 & 0 & 0 & c_{6} \\
1 & 0 & 1 & 0 & 1 & 0 & 1 & 0 & c_{7}
\end{pmatrix}$ & $1$ & $\begin{pmatrix}
c_{1} + c_{7} \\
-c_{0} + c_{2} \\
-c_{1} + c_{3} \\
c_{0} - c_{2} + c_{4} \\
-c_{3} + c_{5} - c_{7} \\
-c_{4} + c_{6} \\
-c_{5} + c_{7} \\
-c_{0} - c_{6}
\end{pmatrix}$ \\ 
\hline 
$(\zeta^{7}, \zeta^{19})$ & $\begin{pmatrix}
0 & -2 & 0 & 0 & 0 & -2 & 0 & 0 & c_{0} \\
0 & 0 & -2 & 0 & 0 & 0 & -2 & 0 & c_{1} \\
0 & 0 & 0 & -2 & 0 & 0 & 0 & -2 & c_{2} \\
0 & 0 & 0 & 0 & -2 & 0 & 0 & 0 & c_{3} \\
0 & 2 & 0 & 0 & 0 & 0 & 0 & 0 & c_{4} \\
0 & 0 & 2 & 0 & 0 & 0 & 0 & 0 & c_{5} \\
0 & 0 & 0 & 2 & 0 & 0 & 0 & 0 & c_{6} \\
2 & 0 & 0 & 0 & 2 & 0 & 0 & 0 & c_{7}
\end{pmatrix}$ & $256$ & $\frac{1}{2} \begin{pmatrix}
c_{3} + c_{7} \\
c_{4} \\
c_{5} \\
c_{6} \\
-c_{3} \\
-c_{0} - c_{4} \\
-c_{1} - c_{5} \\
-c_{2} - c_{6}
\end{pmatrix}$ \\ 
\hline 
$(\zeta^{7}, \zeta^{23})$ & $\begin{pmatrix}
0 & -2 & 0 & 0 & 0 & -1 & 0 & 0 & c_{0} \\
0 & 0 & -2 & 0 & 0 & 0 & -1 & 0 & c_{1} \\
0 & 0 & 0 & -2 & 0 & 0 & 0 & -1 & c_{2} \\
-1 & 0 & 0 & 0 & -2 & 0 & 0 & 0 & c_{3} \\
0 & 1 & 0 & 0 & 0 & -1 & 0 & 0 & c_{4} \\
0 & 0 & 1 & 0 & 0 & 0 & -1 & 0 & c_{5} \\
0 & 0 & 0 & 1 & 0 & 0 & 0 & -1 & c_{6} \\
2 & 0 & 0 & 0 & 1 & 0 & 0 & 0 & c_{7}
\end{pmatrix}$ & $81$ & $\frac{1}{3} \begin{pmatrix}
c_{3} + 2 c_{7} \\
-c_{0} + c_{4} \\
-c_{1} + c_{5} \\
-c_{2} + c_{6} \\
-2 c_{3} - c_{7} \\
-c_{0} - 2 c_{4} \\
-c_{1} - 2 c_{5} \\
-c_{2} - 2 c_{6}
\end{pmatrix}$ \\ 
\hline 
$(\zeta^{11}, \zeta^{13})$ & $\begin{pmatrix}
0 & -1 & 0 & 0 & 0 & 0 & 0 & -1 & c_{0} \\
1 & 0 & -1 & 0 & 0 & 0 & 0 & 0 & c_{1} \\
0 & 1 & 0 & -1 & 0 & 0 & 0 & 0 & c_{2} \\
-1 & 0 & 1 & 0 & -1 & 0 & 0 & 0 & c_{3} \\
0 & 0 & 0 & 1 & 0 & -1 & 0 & 1 & c_{4} \\
0 & 0 & 0 & 0 & 1 & 0 & -1 & 0 & c_{5} \\
0 & 0 & 0 & 0 & 0 & 1 & 0 & -1 & c_{6} \\
1 & 0 & 0 & 0 & 0 & 0 & 1 & 0 & c_{7}
\end{pmatrix}$ & $1$ & $\begin{pmatrix}
c_{1} + c_{3} + c_{5} + c_{7} \\
c_{2} + c_{4} + c_{6} \\
c_{3} + c_{5} + c_{7} \\
c_{4} + c_{6} \\
-c_{1} - c_{3} \\
-c_{0} - c_{2} - c_{4} \\
-c_{1} - c_{3} - c_{5} \\
-c_{0} - c_{2} - c_{4} - c_{6}
\end{pmatrix}$ \\ 
\hline 
$(\zeta^{11}, \zeta^{17})$ & $\begin{pmatrix}
0 & -1 & 0 & -1 & 0 & 0 & 0 & -1 & c_{0} \\
0 & 0 & -1 & 0 & -1 & 0 & 0 & 0 & c_{1} \\
0 & 0 & 0 & -1 & 0 & -1 & 0 & 0 & c_{2} \\
-1 & 0 & 0 & 0 & -1 & 0 & -1 & 0 & c_{3} \\
0 & 0 & 0 & 1 & 0 & -1 & 0 & 0 & c_{4} \\
1 & 0 & 0 & 0 & 1 & 0 & -1 & 0 & c_{5} \\
0 & 1 & 0 & 0 & 0 & 1 & 0 & -1 & c_{6} \\
1 & 0 & 1 & 0 & 0 & 0 & 1 & 0 & c_{7}
\end{pmatrix}$ & $16$ & $\frac{1}{2} \begin{pmatrix}
c_{1} + c_{5} + c_{7} \\
-c_{0} + c_{2} + c_{6} \\
-c_{1} + c_{3} + c_{7} \\
-c_{2} + c_{4} \\
-c_{1} - c_{3} - c_{7} \\
-c_{2} - c_{4} \\
-c_{3} - c_{5} \\
-c_{0} - c_{4} - c_{6}
\end{pmatrix}$ \\ 
\hline 
$(\zeta^{11}, \zeta^{19})$ & $\begin{pmatrix}
0 & -2 & 0 & 0 & 0 & -1 & 0 & 0 & c_{0} \\
0 & 0 & -2 & 0 & 0 & 0 & -1 & 0 & c_{1} \\
0 & 0 & 0 & -2 & 0 & 0 & 0 & -1 & c_{2} \\
-1 & 0 & 0 & 0 & -2 & 0 & 0 & 0 & c_{3} \\
0 & 1 & 0 & 0 & 0 & -1 & 0 & 0 & c_{4} \\
0 & 0 & 1 & 0 & 0 & 0 & -1 & 0 & c_{5} \\
0 & 0 & 0 & 1 & 0 & 0 & 0 & -1 & c_{6} \\
2 & 0 & 0 & 0 & 1 & 0 & 0 & 0 & c_{7}
\end{pmatrix}$ & $81$ & $\frac{1}{3} \begin{pmatrix}
c_{3} + 2 c_{7} \\
-c_{0} + c_{4} \\
-c_{1} + c_{5} \\
-c_{2} + c_{6} \\
-2 c_{3} - c_{7} \\
-c_{0} - 2 c_{4} \\
-c_{1} - 2 c_{5} \\
-c_{2} - 2 c_{6}
\end{pmatrix}$ \\ 
\hline 
$(\zeta^{11}, \zeta^{23})$ & $\begin{pmatrix}
0 & -2 & 0 & 0 & 0 & 0 & 0 & 0 & c_{0} \\
0 & 0 & -2 & 0 & 0 & 0 & 0 & 0 & c_{1} \\
0 & 0 & 0 & -2 & 0 & 0 & 0 & 0 & c_{2} \\
-2 & 0 & 0 & 0 & -2 & 0 & 0 & 0 & c_{3} \\
0 & 0 & 0 & 0 & 0 & -2 & 0 & 0 & c_{4} \\
0 & 0 & 0 & 0 & 0 & 0 & -2 & 0 & c_{5} \\
0 & 0 & 0 & 0 & 0 & 0 & 0 & -2 & c_{6} \\
2 & 0 & 0 & 0 & 0 & 0 & 0 & 0 & c_{7}
\end{pmatrix}$ & $256$ & $\frac{1}{2} \begin{pmatrix}
c_{7} \\
-c_{0} \\
-c_{1} \\
-c_{2} \\
-c_{3} - c_{7} \\
-c_{4} \\
-c_{5} \\
-c_{6}
\end{pmatrix}$ \\ 
\hline 
$(\zeta^{13}, \zeta^{17})$ & $\begin{pmatrix}
0 & 0 & 0 & -1 & 0 & 0 & 0 & 0 & c_{0} \\
-1 & 0 & 0 & 0 & -1 & 0 & 0 & 0 & c_{1} \\
0 & -1 & 0 & 0 & 0 & -1 & 0 & 0 & c_{2} \\
0 & 0 & -1 & 0 & 0 & 0 & -1 & 0 & c_{3} \\
0 & 0 & 0 & 0 & 0 & 0 & 0 & -1 & c_{4} \\
1 & 0 & 0 & 0 & 0 & 0 & 0 & 0 & c_{5} \\
0 & 1 & 0 & 0 & 0 & 0 & 0 & 0 & c_{6} \\
0 & 0 & 1 & 0 & 0 & 0 & 0 & 0 & c_{7}
\end{pmatrix}$ & $1$ & $\begin{pmatrix}
c_{5} \\
c_{6} \\
c_{7} \\
-c_{0} \\
-c_{1} - c_{5} \\
-c_{2} - c_{6} \\
-c_{3} - c_{7} \\
-c_{4}
\end{pmatrix}$ \\ 
\hline 
$(\zeta^{13}, \zeta^{19})$ & $\begin{pmatrix}
0 & -1 & 0 & 0 & 0 & -1 & 0 & 1 & c_{0} \\
-1 & 0 & -1 & 0 & 0 & 0 & -1 & 0 & c_{1} \\
0 & -1 & 0 & -1 & 0 & 0 & 0 & -1 & c_{2} \\
0 & 0 & -1 & 0 & -1 & 0 & 0 & 0 & c_{3} \\
0 & 1 & 0 & -1 & 0 & 0 & 0 & -1 & c_{4} \\
0 & 0 & 1 & 0 & -1 & 0 & 0 & 0 & c_{5} \\
0 & 0 & 0 & 1 & 0 & -1 & 0 & 0 & c_{6} \\
1 & 0 & 0 & 0 & 1 & 0 & -1 & 0 & c_{7}
\end{pmatrix}$ & $16$ & $\frac{1}{2} \begin{pmatrix}
-c_{1} + c_{3} + c_{7} \\
-c_{2} + c_{4} \\
-c_{3} + c_{5} \\
-c_{0} - c_{4} + c_{6} \\
-c_{3} - c_{5} \\
-c_{0} - c_{4} - c_{6} \\
-c_{1} - c_{5} - c_{7} \\
c_{0} - c_{2} - c_{6}
\end{pmatrix}$ \\ 
\hline 
$(\zeta^{13}, \zeta^{23})$ & $\begin{pmatrix}
0 & -1 & 0 & 0 & 0 & 0 & 0 & 1 & c_{0} \\
-1 & 0 & -1 & 0 & 0 & 0 & 0 & 0 & c_{1} \\
0 & -1 & 0 & -1 & 0 & 0 & 0 & 0 & c_{2} \\
-1 & 0 & -1 & 0 & -1 & 0 & 0 & 0 & c_{3} \\
0 & 0 & 0 & -1 & 0 & -1 & 0 & -1 & c_{4} \\
0 & 0 & 0 & 0 & -1 & 0 & -1 & 0 & c_{5} \\
0 & 0 & 0 & 0 & 0 & -1 & 0 & -1 & c_{6} \\
1 & 0 & 0 & 0 & 0 & 0 & -1 & 0 & c_{7}
\end{pmatrix}$ & $1$ & $\begin{pmatrix}
-c_{1} + c_{3} - c_{5} + c_{7} \\
-c_{2} + c_{4} - c_{6} \\
-c_{3} + c_{5} - c_{7} \\
-c_{4} + c_{6} \\
c_{1} - c_{3} \\
-c_{0} + c_{2} - c_{4} \\
-c_{1} + c_{3} - c_{5} \\
c_{0} - c_{2} + c_{4} - c_{6}
\end{pmatrix}$ \\ 
\hline 
$(\zeta^{17}, \zeta^{19})$ & $\begin{pmatrix}
0 & -1 & 0 & 1 & 0 & -1 & 0 & 1 & c_{0} \\
0 & 0 & -1 & 0 & 1 & 0 & -1 & 0 & c_{1} \\
0 & 0 & 0 & -1 & 0 & 1 & 0 & -1 & c_{2} \\
0 & 0 & 0 & 0 & -1 & 0 & 1 & 0 & c_{3} \\
0 & 1 & 0 & -1 & 0 & 0 & 0 & 0 & c_{4} \\
-1 & 0 & 1 & 0 & -1 & 0 & 0 & 0 & c_{5} \\
0 & -1 & 0 & 1 & 0 & -1 & 0 & 0 & c_{6} \\
1 & 0 & -1 & 0 & 1 & 0 & -1 & 0 & c_{7}
\end{pmatrix}$ & $1$ & $\begin{pmatrix}
-c_{1} + c_{7} \\
-c_{0} - c_{2} \\
-c_{1} - c_{3} \\
-c_{0} - c_{2} - c_{4} \\
-c_{3} - c_{5} - c_{7} \\
-c_{4} - c_{6} \\
-c_{5} - c_{7} \\
c_{0} - c_{6}
\end{pmatrix}$ \\ 
\hline 
$(\zeta^{17}, \zeta^{23})$ & $\begin{pmatrix}
0 & -1 & 0 & 1 & 0 & 0 & 0 & 1 & c_{0} \\
0 & 0 & -1 & 0 & 1 & 0 & 0 & 0 & c_{1} \\
0 & 0 & 0 & -1 & 0 & 1 & 0 & 0 & c_{2} \\
-1 & 0 & 0 & 0 & -1 & 0 & 1 & 0 & c_{3} \\
0 & 0 & 0 & -1 & 0 & -1 & 0 & 0 & c_{4} \\
-1 & 0 & 0 & 0 & -1 & 0 & -1 & 0 & c_{5} \\
0 & -1 & 0 & 0 & 0 & -1 & 0 & -1 & c_{6} \\
1 & 0 & -1 & 0 & 0 & 0 & -1 & 0 & c_{7}
\end{pmatrix}$ & $16$ & $\frac{1}{2} \begin{pmatrix}
-c_{1} - c_{5} + c_{7} \\
-c_{0} - c_{2} - c_{6} \\
-c_{1} - c_{3} - c_{7} \\
-c_{2} - c_{4} \\
c_{1} - c_{3} - c_{7} \\
c_{2} - c_{4} \\
c_{3} - c_{5} \\
c_{0} + c_{4} - c_{6}
\end{pmatrix}$ \\ 
\hline 
$(\zeta^{19}, \zeta^{23})$ & $\begin{pmatrix}
0 & 0 & 0 & 0 & 0 & 1 & 0 & 0 & c_{0} \\
0 & 0 & 0 & 0 & 0 & 0 & 1 & 0 & c_{1} \\
0 & 0 & 0 & 0 & 0 & 0 & 0 & 1 & c_{2} \\
-1 & 0 & 0 & 0 & 0 & 0 & 0 & 0 & c_{3} \\
0 & -1 & 0 & 0 & 0 & -1 & 0 & 0 & c_{4} \\
0 & 0 & -1 & 0 & 0 & 0 & -1 & 0 & c_{5} \\
0 & 0 & 0 & -1 & 0 & 0 & 0 & -1 & c_{6} \\
0 & 0 & 0 & 0 & -1 & 0 & 0 & 0 & c_{7}
\end{pmatrix}$ & $1$ & $\begin{pmatrix}
-c_{3} \\
-c_{0} - c_{4} \\
-c_{1} - c_{5} \\
-c_{2} - c_{6} \\
-c_{7} \\
c_{0} \\
c_{1} \\
c_{2}
\end{pmatrix}$ \\ 
\hline 
\end{longtable}

\subsection{For $n = p^{k}$, $k \in \mathbb{N} \setminus \{1\}$ and $p$ a rational prime}\label{subsection 4.2}
In this subsection, we conjecture a characterization for a $(\sigma, \tau)$-derivation $D:O_{\mathbb{K}} \rightarrow O_{\mathbb{K}}$ to be inner when $n$ is of the form $n = p^{k}$, where $k \in \mathbb{N} \setminus \{1\}$ and $p$ is any rational prime. Then the degree of $\mathbb{K}$ over $\mathbb{Q}$ is $\phi(n) = \phi(p^{k}) = p^{k-1}(p-1)$.

The following \th\ref{conjecture 4.3} gives the determinant of the coefficient matrix formed in a particular way.

\begin{conjecture}\th\label{conjecture 4.3}
Let $n=p^{k}$ ($k \in \mathbb{N} \setminus \{1\}$ and $p$ is a rational prime). Suppose $\beta = \sum_{i=0}^{p^{k-1}(p-1)-1} b_{i} \zeta^{i} \in O_{\mathbb{K}}$ and $\beta (\tau - \sigma) (\zeta) =  \sum_{i=0}^{p^{k-1}(p-1)-1} \left( \sum_{j=0}^{p^{k-1}(p-1)-1} a_{ij} b_{j} \right) \zeta^{i}$. Further, let $v-u = p^{e_{1}} m$, where $m$ is a positive integer, $e_{1}$ is a non-negative integer, and $p$ does not divide $m$. Then $A = [a_{ij}]$ is a $p^{k-1}(p-1) \times p^{k-1}(p-1)$ matrix with determinant $\pm p^{p^{e_{1}}}$.
\end{conjecture}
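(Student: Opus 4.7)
The plan is to recognize the matrix $A$ as a matrix of multiplication on the cyclotomic field and then evaluate its determinant as a field norm. By the very definition of the entries $a_{ij}$, if $\beta = \sum_{j=0}^{\phi(n)-1} b_j \zeta^j$ and $B^{T} = (b_0,\dots,b_{\phi(n)-1})$, then the column vector $A B$ lists precisely the coordinates of $\beta(\tau-\sigma)(\zeta) = \beta(\zeta^{v}-\zeta^{u})$ in the power basis $\{1,\zeta,\zeta^{2},\dots,\zeta^{\phi(n)-1}\}$ of $\mathbb{K}$ over $\mathbb{Q}$. Hence $A$ is the matrix of the $\mathbb{Q}$-linear ``multiplication by $\zeta^{v}-\zeta^{u}$'' endomorphism of $\mathbb{K}$ with respect to that basis, and so by the standard characterization of the field norm,
\[
\det(A) \;=\; N_{\mathbb{K}/\mathbb{Q}}\bigl(\zeta^{v}-\zeta^{u}\bigr).
\]

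Next, I would factor $\zeta^{v}-\zeta^{u} = \zeta^{u}\bigl(\zeta^{v-u}-1\bigr)$. Since $u \in U(n)$ with $n=p^{k}$, $\gcd(u,p)=1$, so $\zeta^{u}$ is a primitive $n$-th root of unity and thus a unit of $O_{\mathbb{K}}$ with $N_{\mathbb{K}/\mathbb{Q}}(\zeta^{u}) = \pm 1$. Therefore $|\det(A)| = |N_{\mathbb{K}/\mathbb{Q}}(\zeta^{v-u}-1)|$. Writing $d = v-u = p^{e_{1}} m$ with $p\nmid m$, the hypotheses $u,v\in U(n)$ and $u\neq v$ force $0<|d|<n=p^{k}$, which in particular gives $0 \le e_{1} \le k-1$. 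Consequently $\xi := \zeta^{d}$ has multiplicative order $n/\gcd(n,|d|) = p^{k-e_{1}}$, i.e.\ $\xi$ is a primitive $p^{k-e_{1}}$-th root of unity, and the intermediate field $\mathbb{Q}(\xi)$ satisfies $[\mathbb{Q}(\xi):\mathbb{Q}] = \phi(p^{k-e_{1}}) = p^{k-e_{1}-1}(p-1)$, so
\[
[\mathbb{K}:\mathbb{Q}(\xi)] \;=\; \frac{\phi(p^{k})}{\phi(p^{k-e_{1}})} \;=\; p^{e_{1}}.
\]

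The norm computation is then a direct application of the tower formula, using the fact that $\xi$ is a root of $\Phi_{p^{k-e_{1}}}(x)$:
\[
N_{\mathbb{K}/\mathbb{Q}}(\xi-1) \;=\; \bigl(N_{\mathbb{Q}(\xi)/\mathbb{Q}}(\xi-1)\bigr)^{[\mathbb{K}:\mathbb{Q}(\xi)]} \;=\; \bigl(\pm\,\Phi_{p^{k-e_{1}}}(1)\bigr)^{p^{e_{1}}}.
\]
By parts (iii) and (ii) of \th\ref{lemma 2.8}, $\Phi_{p^{k-e_{1}}}(1) = \Phi_{p}\bigl(1^{p^{k-e_{1}-1}}\bigr) = \Phi_{p}(1) = p$. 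Combining with the unit contribution from $\zeta^{u}$ yields $|\det(A)| = p^{p^{e_{1}}}$, which is exactly the content of the conjecture.

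The only real obstacle is bookkeeping rather than mathematics: one must carefully verify the identification of $A$ with the matrix of multiplication by $\zeta^{v}-\zeta^{u}$ (as opposed to its transpose or its inverse) under the author's row/column convention, and then track the sign contributions from $N_{\mathbb{K}/\mathbb{Q}}(\zeta^{u})$ and from replacing $\xi-1$ by $1-\xi$. Since the statement is only up to sign, these sign bookkeeping issues are harmless; verifying a single case against one of the small tabulated matrices in Section~\ref{subsection 4.1} suffices to fix all conventions. Once the identification is made, the result is a textbook norm calculation in the cyclotomic tower $\mathbb{Q}\subseteq\mathbb{Q}(\zeta^{d})\subseteq\mathbb{Q}(\zeta)$.
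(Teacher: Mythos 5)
Your proposal is correct, and it actually does more than the paper does: the paper gives no proof of Conjecture \ref{conjecture 4.3} at all --- it is put forward as a conjecture and supported only by SAGEMATH verification for the pairs $(k,p)=(2,2),(3,2),(4,2),(5,2),(2,3),(3,3),(2,5)$, with the data recorded in Tables \ref{table 3} and \ref{table 4}. The identification you flag as the ``only real obstacle'' is in fact immediate from the paper's defining relation for $a_{ij}$ in Section \ref{section 4}: the displayed equation $\beta(\tau-\sigma)(\zeta)=\sum_{i}\bigl(\sum_{j}a_{ij}b_{j}\bigr)\zeta^{i}$ says precisely that $A$ sends the coordinate vector of $\beta$ in the power basis $\{1,\zeta,\dots,\zeta^{\phi(n)-1}\}$ (an integral basis of $O_{\mathbb{K}}$) to the coordinate vector of $\beta(\zeta^{v}-\zeta^{u})$, so $\det A=N_{\mathbb{K}/\mathbb{Q}}(\zeta^{v}-\zeta^{u})$ with no transpose or inverse ambiguity (and a transpose would not affect the determinant anyway). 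The rest of your computation is sound: $\zeta^{u}$ is a unit; $0<|v-u|<p^{k}$ forces $e_{1}\le k-1$, so $\zeta^{v-u}$ is a primitive $p^{k-e_{1}}$-th root of unity; the tower formula and $\Phi_{p^{k-e_{1}}}(1)=p$ give $|N_{\mathbb{K}/\mathbb{Q}}(1-\zeta^{v-u})|=p^{p^{e_{1}}}$, which matches every determinant tabulated in Tables \ref{table 3} and \ref{table 4}. The same norm argument, run with $n=2^{r}p$ and $\gcd(n,v-u)=2^{\min(e_{1},r)}p^{\min(e_{2},1)}$ together with $\Phi_{m}(1)=1$ when $m$ has two distinct prime factors, also yields all three cases of Conjecture \ref{conjecture 4.1}, so your route upgrades both determinant conjectures to theorems. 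What your argument does not replace is the explicit computational output of the paper's approach --- the matrices $A$ and the reduced columns $C'$ --- which is what feeds the innerness criteria of Conjectures \ref{conjecture 4.2} and \ref{conjecture 4.4}; conversely, the paper's numerics alone cannot certify the statement for general $(k,p)$, which your norm calculation does.
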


Again, the sign of the determinant of the matrix $A$ depends on the order in which the rows of $A$ are formed. As a consequence, we propose another \th\ref{conjecture 4.4} which gives a necessary and sufficient condition for a $(\sigma, \tau)$-derivation $D: O_{\mathbb{K}} \rightarrow O_{\mathbb{K}}$ to be inner.

\begin{conjecture}\th\label{conjecture 4.4}
Under the hypotheses of \th\ref{conjecture 4.3}, a $(\sigma, \tau)$-derivation $D:O_{\mathbb{K}} \rightarrow O_{\mathbb{K}}$ with $D(\zeta) = \sum_{i=0}^{p^{k-1}(p-1)-1} c_{i} \zeta^{i} \in O_{\mathbb{K}}$, is inner if and only if $\frac{1}{p^{e_{1}}}(Adj(A)C) \in \mathbb{Z}^{p^{k-1}(p-1)}$. In particular, if $p^{e_{1}}$ divides $c_{i}$ for each $i \in \{0, 1, ..., p^{k-1}(p-1)-1\}$, then $D$ is inner. In fact, if $p$ divides $c_{i}$ for each $i \in \{0, 1, ..., p^{k-1}(p-1)-1\}$, then $D$ is inner.
\end{conjecture}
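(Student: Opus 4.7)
The plan is to reduce the question to the solvability of the integer linear system $AX=C$ and then to determine the Smith normal form of $A$ through classical cyclotomic ramification. By \th\ref{lemma 2.6}, $D$ is inner iff $D(\zeta)=\beta(\tau-\sigma)(\zeta)$ for some $\beta\in O_{\mathbb{K}}$. Writing $\beta=\sum_j b_j\zeta^j$ in the $\mathbb{Z}$-basis $\{1,\zeta,\ldots,\zeta^{\phi(n)-1}\}$ identifies $A$ as the matrix of multiplication by $\alpha:=\zeta^v-\zeta^u$ on $O_{\mathbb{K}}$, so the condition ``$D$ is inner'' becomes ``$AX=C$ is solvable in $\mathbb{Z}^{\phi(n)}$''. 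Since $\zeta^u$ is a unit, $(\alpha)=(\zeta^{v-u}-1)$ as ideals; writing $v-u=p^{e_1}m$ with $\gcd(m,p)=1$ and noting $e_1<k$ (since $|v-u|<n$), the element $\zeta^{v-u}=(\zeta^{p^{e_1}})^m$ is a primitive $p^{k-e_1}$-th root of unity, and $(\zeta^{v-u}-1)/(\zeta^{p^{e_1}}-1)=1+\zeta^{p^{e_1}}+\cdots+\zeta^{p^{e_1}(m-1)}$ is a unit in $O_{\mathbb{K}}$ (of the form $(w^m-1)/(w-1)$ with $w$ a root of unity of order coprime to $m$), so $(\alpha)=(\zeta^{p^{e_1}}-1)$.

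Next I would invoke the ramification of $p$ in $\mathbb{Z}[\zeta_{p^k}]$: $p$ is totally ramified, $\mathfrak{p}:=(\zeta-1)$ is the unique prime above $p$, and $pO_{\mathbb{K}}=\mathfrak{p}^{\phi(p^k)}$. The subfield $\mathbb{Q}(\zeta_{p^{k-e_1}})=\mathbb{Q}(\zeta^{p^{e_1}})$ has index $p^{e_1}$ in $\mathbb{K}$, and $\mathbb{K}/\mathbb{Q}(\zeta_{p^{k-e_1}})$ is totally ramified at the unique prime above $p$, so lifting yields $(\zeta^{p^{e_1}}-1)O_{\mathbb{K}}=\mathfrak{p}^{p^{e_1}}$. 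This already proves \th\ref{conjecture 4.3}: $\det A=\pm N_{\mathbb{K}/\mathbb{Q}}(\alpha)=\pm[N_{\mathbb{Q}(\zeta_{p^{k-e_1}})/\mathbb{Q}}(\zeta^{v-u}-1)]^{p^{e_1}}=\pm(\Phi_{p^{k-e_1}}(1))^{p^{e_1}}=\pm p^{p^{e_1}}$. The cokernel of $A:\mathbb{Z}^{\phi(n)}\to\mathbb{Z}^{\phi(n)}$ equals $O_{\mathbb{K}}/\mathfrak{p}^{p^{e_1}}$, of order $p^{p^{e_1}}$; since $p\in\mathfrak{p}^{\phi(p^k)}\subseteq\mathfrak{p}^{p^{e_1}}$ (using $p^{e_1}\leq p^{k-1}\leq\phi(p^k)$), the cokernel is annihilated by $p$, hence is an $\mathbb{F}_p$-vector space of dimension $p^{e_1}$. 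The Smith normal form of $A$ is therefore $\operatorname{diag}(\underbrace{1,\ldots,1}_{\phi(n)-p^{e_1}},\underbrace{p,\ldots,p}_{p^{e_1}})$, and $AX=C$ is solvable in $\mathbb{Z}^{\phi(n)}$ iff $C\in\mathfrak{p}^{p^{e_1}}$.

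From this ideal-theoretic criterion the two sufficient conditions in the conjecture follow: if $p\mid c_i$ for every $i$, then $C\in pO_{\mathbb{K}}\subseteq\mathfrak{p}^{p^{e_1}}$, so $D$ is inner. For the adjugate form of the iff, $\operatorname{Adj}(A)\cdot A=\det(A)\,I=\pm p^{p^{e_1}}I$ gives that any integer solution satisfies $X=\pm p^{-p^{e_1}}\operatorname{Adj}(A)C$, and conversely an integer value of $p^{-p^{e_1}}\operatorname{Adj}(A)C$ yields a valid solution. The main obstacle I anticipate is the final bookkeeping: the Smith-form analysis above shows that every entry of $\operatorname{Adj}(A)$ is divisible by $p^{p^{e_1}-1}$, and careful use of this divisibility together with the structure of $A$ will be essential for matching the precise normalization stated in the conjecture.
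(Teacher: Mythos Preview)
The paper does not prove this statement at all: it is explicitly a conjecture, and Section~\ref{subsection 4.2} offers only SAGEMATH verification for a handful of pairs $(k,p)$ together with explicit tables. Your argument via the ramification of $p$ in $\mathbb{Z}[\zeta_{p^k}]$ is therefore not a variant of the paper's proof but a genuine proof where the paper has none, and it simultaneously establishes \th\ref{conjecture 4.3}.

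Your chain of reductions is correct. The identification of $A$ with multiplication by $\alpha=\zeta^v-\zeta^u$, the equality of ideals $(\alpha)=(\zeta^{p^{e_1}}-1)=\mathfrak{p}^{p^{e_1}}$ via the cyclotomic-unit quotient $(w^m-1)/(w-1)$, the determinant computation $\det A=\pm N_{\mathbb{K}/\mathbb{Q}}(\alpha)=\pm p^{p^{e_1}}$, and the Smith-form determination from the fact that $p\in\mathfrak{p}^{\phi(p^k)}\subseteq\mathfrak{p}^{p^{e_1}}$ annihilates $O_{\mathbb{K}}/\mathfrak{p}^{p^{e_1}}$ are all sound; note only that $e_1\le k-1$ (since $0<|v-u|<p^k$), which you use and should state. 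Your ideal-theoretic criterion ``$D$ inner $\Leftrightarrow D(\zeta)\in\mathfrak p^{\,p^{e_1}}$'' then gives the ``in fact'' clause directly: $p\mid c_i$ for all $i$ forces $D(\zeta)\in pO_{\mathbb{K}}=\mathfrak p^{\phi(p^k)}\subseteq\mathfrak p^{\,p^{e_1}}$.

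The ``bookkeeping obstacle'' you anticipate is not a gap in your argument but a defect in the paper's formulation. Your analysis yields the iff with denominator $|\det A|=p^{p^{e_1}}$, exactly parallel to \th\ref{conjecture 4.2}, whereas the conjecture writes $p^{e_1}$. Your Smith form shows every entry of $\operatorname{Adj}(A)$ is already divisible by $p^{\,p^{e_1}-1}\geq p^{e_1}$, so the condition $p^{-e_1}\operatorname{Adj}(A)C\in\mathbb{Z}^{\phi(n)}$ as literally printed is vacuous; and for $e_1=0$ the ``in particular'' clause (with $p^{0}=1$ dividing every $c_i$) would force every $(\sigma,\tau)$-derivation to be inner, contradicted by the paper's own Table~\ref{table 4} (e.g.\ the row $(\zeta,\zeta^{2})$). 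With the exponent corrected to $p^{p^{e_1}}$, your proof is complete.
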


The case where $k=1$ was dealt with in \cite{Manju2023b}.
In this article, \th\ref{conjecture 4.3} and \th\ref{conjecture 4.4} have been verified for the pairs $(k,p) = (2,2), (3,2), (4,2), (5,2), (2,3), (3,3), (2,5)$. As $k$ and $p$ increase, the size of the matrix $A$ ($\phi(n) \times \phi(n)$, where $n=p^{k}$) increases, and it becomes computationally tedious for us to deal with matrices of such enormous size. For an illustration of the conjectures, we record our observations for the prime $p=2$ for $k=2, 3, 4$ and $p=3$ for $k=2$ in the following tables \ref{table 3} and \ref{table 4}, respectively.

\small\addtolength{\tabcolsep}{-3pt}
\begin{longtable}{|c|c|c|c|}
\caption{\textbf{For} $\bm{p=2}$ \textbf{and} $\bm{k=2,3,4;}$ $\bm{n=p^{k}.}$}
\label{table 3} \\
\hline 
$(\sigma(\zeta), \tau(\zeta))$ & $(A ~ C)$ & $\textbf{det}(A)$ & $C'$  \\
\hline \hline
\multicolumn{4}{|c|}{$\bm{(k,p) = (2,2),}$ $\bm{n = 4,}$ $\bm{\phi(4) = |\{1,3\}| = 2,}$ $\bm{\Phi_{4}(x) = x^{2} + 1.}$} \\
\hline \hline
$(\zeta, \zeta^{3})$ & $\begin{pmatrix}
0 & -2 & c_{0} \\
2 & 0 & c_{1}
\end{pmatrix}$ & $4$ & $\frac{1}{2} \begin{pmatrix}
c_{1} \\
-c_{0}
\end{pmatrix}$ \\ 
\hline \hline
\multicolumn{4}{|c|}{$\bm{(k,p) = (3,2),}$ $\bm{n = 8,}$ $\bm{\phi(8) = |\{[1, 3, 5, 7\}| = 4,}$ $\bm{\Phi_{8}(x) = x^{4} + 1.}$} \\
\hline \hline
$(\zeta, \zeta^{3})$ & $\begin{pmatrix}
0 & 1 & 0 & -1 & c_{0} \\
1 & 0 & 1 & 0 & c_{1} \\
0 & 1 & 0 & 1 & c_{2} \\
-1 & 0 & 1 & 0 & c_{3}
\end{pmatrix}$ & $4$ & $\frac{1}{2} \begin{pmatrix}
c_{1} - c_{3} \\
c_{0} + c_{2} \\
c_{1} + c_{3} \\
-c_{0} + c_{2}
\end{pmatrix}$ \\ 
\hline 
$(\zeta, \zeta^{5})$ & $\begin{pmatrix}
0 & 0 & 0 & -2 & c_{0} \\
2 & 0 & 0 & 0 & c_{1} \\
0 & 2 & 0 & 0 & c_{2} \\
0 & 0 & 2 & 0 & c_{3}
\end{pmatrix}$ & $16$ & $\frac{1}{2} \begin{pmatrix}
c_{1} \\
c_{2} \\
c_{3} \\
-c_{0}
\end{pmatrix}$ \\ 
\hline 
$(\zeta, \zeta^{7})$ & $\begin{pmatrix}
0 & -1 & 0 & -1 & c_{0} \\
1 & 0 & -1 & 0 & c_{1} \\
0 & 1 & 0 & -1 & c_{2} \\
1 & 0 & 1 & 0 & c_{3}
\end{pmatrix}$ & $4$ & $\frac{1}{2} \begin{pmatrix}
c_{1} + c_{3} \\
-c_{0} + c_{2} \\
-c_{1} + c_{3} \\
-c_{0} - c_{2}
\end{pmatrix}$ \\ 
\hline 
$(\zeta^{3}, \zeta^{5})$ & $\begin{pmatrix}
0 & -1 & 0 & -1 & c_{0} \\
1 & 0 & -1 & 0 & c_{1} \\
0 & 1 & 0 & -1 & c_{2} \\
1 & 0 & 1 & 0 & c_{3}
\end{pmatrix}$ & $4$ & $\frac{1}{2} \begin{pmatrix}
c_{1} + c_{3} \\
-c_{0} + c_{2} \\
-c_{1} + c_{3} \\
-c_{0} - c_{2}
\end{pmatrix}$ \\ 
\hline 
$(\zeta^{3}, \zeta^{7})$ & $\begin{pmatrix}
0 & -2 & 0 & 0 & c_{0} \\
0 & 0 & -2 & 0 & c_{1} \\
0 & 0 & 0 & -2 & c_{2} \\
2 & 0 & 0 & 0 & c_{3}
\end{pmatrix}$ & $16$ & $\frac{1}{2} \begin{pmatrix}
c_{3} \\
-c_{0} \\
-c_{1} \\
-c_{2}
\end{pmatrix}$ \\ 
\hline 
$(\zeta^{5}, \zeta^{7})$ & $\begin{pmatrix}
0 & -1 & 0 & 1 & c_{0} \\
-1 & 0 & -1 & 0 & c_{1} \\
0 & -1 & 0 & -1 & c_{2} \\
1 & 0 & -1 & 0 & c_{3}
\end{pmatrix}$ & $4$ & $\frac{1}{2} \begin{pmatrix}
-c_{1} + c_{3} \\
-c_{0} - c_{2} \\
-c_{1} - c_{3} \\
c_{0} - c_{2}
\end{pmatrix}$ \\ 
\hline \hline
\multicolumn{4}{|c|}{$\bm{(k,p) = (4,2),}$ $\bm{n = 16,}$ $\bm{\phi(16) = |\{[1, 3, 5, 7, 9, 11, 13, 15\}| = 8,}$ $\bm{\Phi_{16}(x) = x^{8} + 1.}$} \\
\hline  \hline
$(\sigma(\zeta), \tau(\zeta))$ & $(A ~ C)$ & $\text{det}(A)$ & $C'$ \\
\hline
$(\zeta, \zeta^{3})$ & $\begin{pmatrix}
0 & 0 & 0 & 0 & -1 & 0 & 1 & 0 & c_{0} \\
0 & 0 & 0 & -1 & 0 & 1 & 0 & 0 & c_{1} \\
0 & 0 & -1 & 0 & 1 & 0 & 0 & 0 & c_{2} \\
0 & -1 & 0 & 1 & 0 & 0 & 0 & 0 & c_{3} \\
-1 & 0 & 1 & 0 & 0 & 0 & 0 & 0 & c_{4} \\
0 & 1 & 0 & 0 & 0 & 0 & 0 & 1 & c_{5} \\
1 & 0 & 0 & 0 & 0 & 0 & 1 & 0 & c_{6} \\
0 & 0 & 0 & 0 & 0 & 1 & 0 & -1 & c_{7}
\end{pmatrix}$ & $4$ & $\frac{1}{2} \begin{pmatrix}
-c_{0} - c_{2} - c_{4} + c_{6} \\
-c_{1} - c_{3} + c_{5} + c_{7} \\
-c_{0} - c_{2} + c_{4} + c_{6} \\
-c_{1} + c_{3} + c_{5} + c_{7} \\
-c_{0} + c_{2} + c_{4} + c_{6} \\
c_{1} + c_{3} + c_{5} + c_{7} \\
c_{0} + c_{2} + c_{4} + c_{6} \\
c_{1} + c_{3} + c_{5} - c_{7}
\end{pmatrix}$ \\ 
\hline 
$(\zeta, \zeta^{5})$ & $\begin{pmatrix}
0 & 0 & 0 & 1 & 0 & 0 & 0 & -1 & c_{0} \\
1 & 0 & 0 & 0 & 1 & 0 & 0 & 0 & c_{1} \\
0 & 1 & 0 & 0 & 0 & 1 & 0 & 0 & c_{2} \\
0 & 0 & 1 & 0 & 0 & 0 & 1 & 0 & c_{3} \\
0 & 0 & 0 & 1 & 0 & 0 & 0 & 1 & c_{4} \\
-1 & 0 & 0 & 0 & 1 & 0 & 0 & 0 & c_{5} \\
0 & -1 & 0 & 0 & 0 & 1 & 0 & 0 & c_{6} \\
0 & 0 & -1 & 0 & 0 & 0 & 1 & 0 & c_{7}
\end{pmatrix}$ & $16$ & $\frac{1}{2} \begin{pmatrix}
c_{1} - c_{5} \\
c_{2} - c_{6} \\
c_{3} - c_{7} \\
c_{0} + c_{4} \\
c_{1} + c_{5} \\
c_{2} + c_{6} \\
c_{3} + c_{7} \\
-c_{0} + c_{4}
\end{pmatrix}$ \\ 
\hline 
$(\zeta, \zeta^{7})$ & $\begin{pmatrix}
0 & 1 & 0 & 0 & 0 & 0 & 0 & -1 & c_{0} \\
1 & 0 & 1 & 0 & 0 & 0 & 0 & 0 & c_{1} \\
0 & 1 & 0 & 1 & 0 & 0 & 0 & 0 & c_{2} \\
0 & 0 & 1 & 0 & 1 & 0 & 0 & 0 & c_{3} \\
0 & 0 & 0 & 1 & 0 & 1 & 0 & 0 & c_{4} \\
0 & 0 & 0 & 0 & 1 & 0 & 1 & 0 & c_{5} \\
0 & 0 & 0 & 0 & 0 & 1 & 0 & 1 & c_{6} \\
-1 & 0 & 0 & 0 & 0 & 0 & 1 & 0 & c_{7}
\end{pmatrix}$ & $4$ & $\frac{1}{2} \begin{pmatrix}
c_{1} - c_{3} + c_{5} - c_{7} \\
c_{0} + c_{2} - c_{4} + c_{6} \\
c_{1} + c_{3} - c_{5} + c_{7} \\
-c_{0} + c_{2} + c_{4} - c_{6} \\
-c_{1} + c_{3} + c_{5} - c_{7} \\
c_{0} - c_{2} + c_{4} + c_{6} \\
c_{1} - c_{3} + c_{5} + c_{7} \\
-c_{0} + c_{2} - c_{4} + c_{6}
\end{pmatrix}$ \\ 
\hline 
$(\zeta, \zeta^{9})$ & $\begin{pmatrix}
0 & 0 & 0 & 0 & 0 & 0 & 0 & -2 & c_{0} \\
2 & 0 & 0 & 0 & 0 & 0 & 0 & 0 & c_{1} \\
0 & 2 & 0 & 0 & 0 & 0 & 0 & 0 & c_{2} \\
0 & 0 & 2 & 0 & 0 & 0 & 0 & 0 & c_{3} \\
0 & 0 & 0 & 2 & 0 & 0 & 0 & 0 & c_{4} \\
0 & 0 & 0 & 0 & 2 & 0 & 0 & 0 & c_{5} \\
0 & 0 & 0 & 0 & 0 & 2 & 0 & 0 & c_{6} \\
0 & 0 & 0 & 0 & 0 & 0 & 2 & 0 & c_{7}
\end{pmatrix}$ & $256$ & $\frac{1}{2} \begin{pmatrix}
c_{1} \\
c_{2} \\
c_{3} \\
c_{4} \\
c_{5} \\
c_{6} \\
c_{7} \\
-c_{0}
\end{pmatrix}$ \\ 
\hline 
$(\zeta, \zeta^{11})$ & $\begin{pmatrix}
0 & 0 & 0 & 0 & 0 & -1 & 0 & -1 & c_{0} \\
1 & 0 & 0 & 0 & 0 & 0 & -1 & 0 & c_{1} \\
0 & 1 & 0 & 0 & 0 & 0 & 0 & -1 & c_{2} \\
1 & 0 & 1 & 0 & 0 & 0 & 0 & 0 & c_{3} \\
0 & 1 & 0 & 1 & 0 & 0 & 0 & 0 & c_{4} \\
0 & 0 & 1 & 0 & 1 & 0 & 0 & 0 & c_{5} \\
0 & 0 & 0 & 1 & 0 & 1 & 0 & 0 & c_{6} \\
0 & 0 & 0 & 0 & 1 & 0 & 1 & 0 & c_{7}
\end{pmatrix}$ & $4$ & $\frac{1}{2} \begin{pmatrix}
c_{1} + c_{3} - c_{5} + c_{7} \\
-c_{0} + c_{2} + c_{4} - c_{6} \\
-c_{1} + c_{3} + c_{5} - c_{7} \\
c_{0} - c_{2} + c_{4} + c_{6} \\
c_{1} - c_{3} + c_{5} + c_{7} \\
-c_{0} + c_{2} - c_{4} + c_{6} \\
-c_{1} + c_{3} - c_{5} + c_{7} \\
-c_{0} - c_{2} + c_{4} - c_{6}
\end{pmatrix}$ \\ 
\hline 
$(\zeta, \zeta^{13})$ & $\begin{pmatrix}
0 & 0 & 0 & -1 & 0 & 0 & 0 & -1 & c_{0} \\
1 & 0 & 0 & 0 & -1 & 0 & 0 & 0 & c_{1} \\
0 & 1 & 0 & 0 & 0 & -1 & 0 & 0 & c_{2} \\
0 & 0 & 1 & 0 & 0 & 0 & -1 & 0 & c_{3} \\
0 & 0 & 0 & 1 & 0 & 0 & 0 & -1 & c_{4} \\
1 & 0 & 0 & 0 & 1 & 0 & 0 & 0 & c_{5} \\
0 & 1 & 0 & 0 & 0 & 1 & 0 & 0 & c_{6} \\
0 & 0 & 1 & 0 & 0 & 0 & 1 & 0 & c_{7}
\end{pmatrix}$ & $16$ & $\frac{1}{2} \begin{pmatrix}
c_{1} + c_{5} \\
c_{2} + c_{6} \\
c_{3} + c_{7} \\
-c_{0} + c_{4} \\
-c_{1} + c_{5} \\
-c_{2} + c_{6} \\
-c_{3} + c_{7} \\
-c_{0} - c_{4}
\end{pmatrix}$ \\ 
\hline 
$(\zeta, \zeta^{15})$ & $\begin{pmatrix}
0 & -1 & 0 & 0 & 0 & 0 & 0 & -1 & c_{0} \\
1 & 0 & -1 & 0 & 0 & 0 & 0 & 0 & c_{1} \\
0 & 1 & 0 & -1 & 0 & 0 & 0 & 0 & c_{2} \\
0 & 0 & 1 & 0 & -1 & 0 & 0 & 0 & c_{3} \\
0 & 0 & 0 & 1 & 0 & -1 & 0 & 0 & c_{4} \\
0 & 0 & 0 & 0 & 1 & 0 & -1 & 0 & c_{5} \\
0 & 0 & 0 & 0 & 0 & 1 & 0 & -1 & c_{6} \\
1 & 0 & 0 & 0 & 0 & 0 & 1 & 0 & c_{7}
\end{pmatrix}$ & $4$ & $\frac{1}{2} \begin{pmatrix}
c_{1} + c_{3} + c_{5} + c_{7} \\
-c_{0} + c_{2} + c_{4} + c_{6} \\
-c_{1} + c_{3} + c_{5} + c_{7} \\
-c_{0} - c_{2} + c_{4} + c_{6} \\
-c_{1} - c_{3} + c_{5} + c_{7} \\
-c_{0} - c_{2} - c_{4} + c_{6} \\
-c_{1} - c_{3} - c_{5} + c_{7} \\
-c_{0} - c_{2} - c_{4} - c_{6}
\end{pmatrix}$ \\ 
\hline 
$(\zeta^{3}, \zeta^{5})$ & $\begin{pmatrix}
0 & 0 & 0 & 1 & 0 & -1 & 0 & 0 & c_{0} \\
0 & 0 & 0 & 0 & 1 & 0 & -1 & 0 & c_{1} \\
0 & 0 & 0 & 0 & 0 & 1 & 0 & -1 & c_{2} \\
1 & 0 & 0 & 0 & 0 & 0 & 1 & 0 & c_{3} \\
0 & 1 & 0 & 0 & 0 & 0 & 0 & 1 & c_{4} \\
-1 & 0 & 1 & 0 & 0 & 0 & 0 & 0 & c_{5} \\
0 & -1 & 0 & 1 & 0 & 0 & 0 & 0 & c_{6} \\
0 & 0 & -1 & 0 & 1 & 0 & 0 & 0 & c_{7} \\
\end{pmatrix}$ & $4$ & $\frac{1}{2} \begin{pmatrix}
c_{1} + c_{3} - c_{5} - c_{7} \\
c_{0} + c_{2} + c_{4} - c_{6} \\
c_{1} + c_{3} + c_{5} - c_{7} \\
c_{0} + c_{2} + c_{4} + c_{6} \\
c_{1} + c_{3} + c_{5} + c_{7} \\
-c_{0} + c_{2} + c_{4} + c_{6} \\
-c_{1} + c_{3} + c_{5} + c_{7} \\
-c_{0} - c_{2} + c_{4} + c_{6}
\end{pmatrix}$ \\ 
\hline 
$(\zeta^{3}, \zeta^{7})$ & $\begin{pmatrix}
0 & 1 & 0 & 0 & 0 & -1 & 0 & 0 & c_{0} \\
0 & 0 & 1 & 0 & 0 & 0 & -1 & 0 & c_{1} \\
0 & 0 & 0 & 1 & 0 & 0 & 0 & -1 & c_{2} \\
1 & 0 & 0 & 0 & 1 & 0 & 0 & 0 & c_{3} \\
0 & 1 & 0 & 0 & 0 & 1 & 0 & 0 & c_{4} \\
0 & 0 & 1 & 0 & 0 & 0 & 1 & 0 & c_{5} \\
0 & 0 & 0 & 1 & 0 & 0 & 0 & 1 & c_{6} \\
-1 & 0 & 0 & 0 & 1 & 0 & 0 & 0 & c_{7}
\end{pmatrix}$ & $16$ & $\frac{1}{2} \begin{pmatrix}
c_{3} - c_{7} \\
c_{0} + c_{4} \\
c_{1} + c_{5} \\
c_{2} + c_{6} \\
c_{3} + c_{7} \\
-c_{0} + c_{4} \\
-c_{1} + c_{5} \\
-c_{2} + c_{6}
\end{pmatrix}$ \\ 
\hline 
$(\zeta^{3}, \zeta^{9})$ & $\begin{pmatrix}
0 & 0 & 0 & 0 & 0 & -1 & 0 & -1 & c_{0} \\
1 & 0 & 0 & 0 & 0 & 0 & -1 & 0 & c_{1} \\
0 & 1 & 0 & 0 & 0 & 0 & 0 & -1 & c_{2} \\
1 & 0 & 1 & 0 & 0 & 0 & 0 & 0 & c_{3} \\
0 & 1 & 0 & 1 & 0 & 0 & 0 & 0 & c_{4} \\
0 & 0 & 1 & 0 & 1 & 0 & 0 & 0 & c_{5} \\
0 & 0 & 0 & 1 & 0 & 1 & 0 & 0 & c_{6} \\
0 & 0 & 0 & 0 & 1 & 0 & 1 & 0 & c_{7}
\end{pmatrix}$ & $4$ & $\frac{1}{2} \begin{pmatrix}
c_{1} + c_{3} - c_{5} + c_{7} \\
-c_{0} + c_{2} + c_{4} - c_{6} \\
-c_{1} + c_{3} + c_{5} - c_{7} \\
c_{0} - c_{2} + c_{4} + c_{6} \\
c_{1} - c_{3} + c_{5} + c_{7} \\
-c_{0} + c_{2} - c_{4} + c_{6} \\
-c_{1} + c_{3} - c_{5} + c_{7} \\
-c_{0} - c_{2} + c_{4} - c_{6}
\end{pmatrix}$ \\ 
\hline 
$(\zeta^{3}, \zeta^{11})$ & $\begin{pmatrix}
0 & 0 & 0 & 0 & 0 & -2 & 0 & 0 & c_{0} \\
0 & 0 & 0 & 0 & 0 & 0 & -2 & 0 & c_{1} \\
0 & 0 & 0 & 0 & 0 & 0 & 0 & -2 & c_{2} \\
2 & 0 & 0 & 0 & 0 & 0 & 0 & 0 & c_{3} \\
0 & 2 & 0 & 0 & 0 & 0 & 0 & 0 & c_{4} \\
0 & 0 & 2 & 0 & 0 & 0 & 0 & 0 & c_{5} \\
0 & 0 & 0 & 2 & 0 & 0 & 0 & 0 & c_{6} \\
0 & 0 & 0 & 0 & 2 & 0 & 0 & 0 & c_{7}
\end{pmatrix}$ & $256$ & $\frac{1}{2} \begin{pmatrix}
c_{3} \\
c_{4} \\
c_{5} \\
c_{6} \\
c_{7} \\
-c_{0} \\
-c_{1} \\
-c_{2}
\end{pmatrix}$ \\ 
\hline 
$(\zeta^{3}, \zeta^{13})$ & $\begin{pmatrix}
0 & 0 & 0 & -1 & 0 & -1 & 0 & 0 & c_{0} \\
0 & 0 & 0 & 0 & -1 & 0 & -1 & 0 & c_{1} \\
0 & 0 & 0 & 0 & 0 & -1 & 0 & -1 & c_{2} \\
1 & 0 & 0 & 0 & 0 & 0 & -1 & 0 & c_{3} \\
0 & 1 & 0 & 0 & 0 & 0 & 0 & -1 & c_{4} \\
1 & 0 & 1 & 0 & 0 & 0 & 0 & 0 & c_{5} \\
0 & 1 & 0 & 1 & 0 & 0 & 0 & 0 & c_{6} \\
0 & 0 & 1 & 0 & 1 & 0 & 0 & 0 & c_{7}
\end{pmatrix}$ & $4$ & $\frac{1}{2} \begin{pmatrix}
-c_{1} + c_{3} + c_{5} - c_{7} \\
c_{0} - c_{2} + c_{4} + c_{6} \\
c_{1} - c_{3} + c_{5} + c_{7} \\
-c_{0} + c_{2} - c_{4} + c_{6} \\
-c_{1} + c_{3} - c_{5} + c_{7} \\
-c_{0} - c_{2} + c_{4} - c_{6} \\
-c_{1} - c_{3} + c_{5} - c_{7} \\
c_{0} - c_{2} - c_{4} + c_{6}
\end{pmatrix}$ \\ 
\hline 
$(\zeta^{3}, \zeta^{15})$ & $\begin{pmatrix}
0 & -1 & 0 & 0 & 0 & -1 & 0 & 0 & c_{0} \\
0 & 0 & -1 & 0 & 0 & 0 & -1 & 0 & c_{1} \\
0 & 0 & 0 & -1 & 0 & 0 & 0 & -1 & c_{2} \\
1 & 0 & 0 & 0 & -1 & 0 & 0 & 0 & c_{3} \\
0 & 1 & 0 & 0 & 0 & -1 & 0 & 0 & c_{4} \\
0 & 0 & 1 & 0 & 0 & 0 & -1 & 0 & c_{5} \\
0 & 0 & 0 & 1 & 0 & 0 & 0 & -1 & c_{6} \\
1 & 0 & 0 & 0 & 1 & 0 & 0 & 0 & c_{7}
\end{pmatrix}$ & $16$ & $\frac{1}{2} \begin{pmatrix}
c_{3} + c_{7} \\
-c_{0} + c_{4} \\
-c_{1} + c_{5} \\
-c_{2} + c_{6} \\
-c_{3} + c_{7} \\
-c_{0} - c_{4} \\
-c_{1} - c_{5} \\
-c_{2} - c_{6}
\end{pmatrix}$ \\ 
\hline 
$(\zeta^{5}, \zeta^{7})$ & $\begin{pmatrix}
0 & 1 & 0 & -1 & 0 & 0 & 0 & 0 & c_{0} \\
0 & 0 & 1 & 0 & -1 & 0 & 0 & 0 & c_{1} \\
0 & 0 & 0 & 1 & 0 & -1 & 0 & 0 & c_{2} \\
0 & 0 & 0 & 0 & 1 & 0 & -1 & 0 & c_{3} \\
0 & 0 & 0 & 0 & 0 & 1 & 0 & -1 & c_{4} \\
1 & 0 & 0 & 0 & 0 & 0 & 1 & 0 & c_{5} \\
0 & 1 & 0 & 0 & 0 & 0 & 0 & 1 & c_{6} \\
-1 & 0 & 1 & 0 & 0 & 0 & 0 & 0 & c_{7}
\end{pmatrix}$ & $4$ & $\frac{1}{2} \begin{pmatrix}
c_{1} + c_{3} + c_{5} - c_{7} \\
c_{0} + c_{2} + c_{4} + c_{6} \\
c_{1} + c_{3} + c_{5} + c_{7} \\
-c_{0} + c_{2} + c_{4} + c_{6} \\
-c_{1} + c_{3} + c_{5} + c_{7} \\
-c_{0} - c_{2} + c_{4} + c_{6} \\
-c_{1} - c_{3} + c_{5} + c_{7} \\
-c_{0} - c_{2} - c_{4} + c_{6}
\end{pmatrix}$ \\ 
\hline 
$(\zeta^{5}, \zeta^{9})$ & $\begin{pmatrix}
0 & 0 & 0 & -1 & 0 & 0 & 0 & -1 & c_{0} \\
1 & 0 & 0 & 0 & -1 & 0 & 0 & 0 & c_{1} \\
0 & 1 & 0 & 0 & 0 & -1 & 0 & 0 & c_{2} \\
0 & 0 & 1 & 0 & 0 & 0 & -1 & 0 & c_{3} \\
0 & 0 & 0 & 1 & 0 & 0 & 0 & -1 & c_{4} \\
1 & 0 & 0 & 0 & 1 & 0 & 0 & 0 & c_{5} \\
0 & 1 & 0 & 0 & 0 & 1 & 0 & 0 & c_{6} \\
0 & 0 & 1 & 0 & 0 & 0 & 1 & 0 & c_{7}
\end{pmatrix}$ & $16$ & $\frac{1}{2} \begin{pmatrix}
c_{1} + c_{5} \\
c_{2} + c_{6} \\
c_{3} + c_{7} \\
-c_{0} + c_{4} \\
-c_{1} + c_{5} \\
-c_{2} + c_{6} \\
-c_{3} + c_{7} \\
-c_{0} - c_{4}
\end{pmatrix}$ \\ 
\hline 
$(\zeta^{5}, \zeta^{11})$ & $\begin{pmatrix}
0 & 0 & 0 & -1 & 0 & -1 & 0 & 0 & c_{0} \\
0 & 0 & 0 & 0 & -1 & 0 & -1 & 0 & c_{1} \\
0 & 0 & 0 & 0 & 0 & -1 & 0 & -1 & c_{2} \\
1 & 0 & 0 & 0 & 0 & 0 & -1 & 0 & c_{3} \\
0 & 1 & 0 & 0 & 0 & 0 & 0 & -1 & c_{4} \\
1 & 0 & 1 & 0 & 0 & 0 & 0 & 0 & c_{5} \\
0 & 1 & 0 & 1 & 0 & 0 & 0 & 0 & c_{6} \\
0 & 0 & 1 & 0 & 1 & 0 & 0 & 0 & c_{7}
\end{pmatrix}$ & $4$ & $\frac{1}{2} \begin{pmatrix}
-c_{1} + c_{3} + c_{5} - c_{7} \\
c_{0} - c_{2} + c_{4} + c_{6} \\
c_{1} - c_{3} + c_{5} + c_{7} \\
-c_{0} + c_{2} - c_{4} + c_{6} \\
-c_{1} + c_{3} - c_{5} + c_{7} \\
-c_{0} - c_{2} + c_{4} - c_{6} \\
-c_{1} - c_{3} + c_{5} - c_{7} \\
c_{0} - c_{2} - c_{4} + c_{6}
\end{pmatrix}$ \\ 
\hline 
$(\zeta^{5}, \zeta^{13})$ & $\begin{pmatrix}
0 & 0 & 0 & -2 & 0 & 0 & 0 & 0 & c_{0} \\
0 & 0 & 0 & 0 & -2 & 0 & 0 & 0 & c_{1} \\
0 & 0 & 0 & 0 & 0 & -2 & 0 & 0 & c_{2} \\
0 & 0 & 0 & 0 & 0 & 0 & -2 & 0 & c_{3} \\
0 & 0 & 0 & 0 & 0 & 0 & 0 & -2 & c_{4} \\
2 & 0 & 0 & 0 & 0 & 0 & 0 & 0 & c_{5} \\
0 & 2 & 0 & 0 & 0 & 0 & 0 & 0 & c_{6} \\
0 & 0 & 2 & 0 & 0 & 0 & 0 & 0 & c_{7}
\end{pmatrix}$ & $256$ & $\frac{1}{2} \begin{pmatrix}
c_{5} \\
c_{6} \\
c_{7} \\
-c_{0} \\
-c_{1} \\
-c_{2} \\
-c_{3} \\
-c_{4}
\end{pmatrix}$ \\ 
\hline 
$(\zeta^{5}, \zeta^{15})$ & $\begin{pmatrix}
0 & -1 & 0 & -1 & 0 & 0 & 0 & 0 & c_{0} \\
0 & 0 & -1 & 0 & -1 & 0 & 0 & 0 & c_{1} \\
0 & 0 & 0 & -1 & 0 & -1 & 0 & 0 & c_{2} \\
0 & 0 & 0 & 0 & -1 & 0 & -1 & 0 & c_{3} \\
0 & 0 & 0 & 0 & 0 & -1 & 0 & -1 & c_{4} \\
1 & 0 & 0 & 0 & 0 & 0 & -1 & 0 & c_{5} \\
0 & 1 & 0 & 0 & 0 & 0 & 0 & -1 & c_{6} \\
1 & 0 & 1 & 0 & 0 & 0 & 0 & 0 & c_{7}
\end{pmatrix}$ & $4$ & $\frac{1}{2} \begin{pmatrix}
c_{1} - c_{3} + c_{5} + c_{7} \\
-c_{0} + c_{2} - c_{4} + c_{6} \\
-c_{1} + c_{3} - c_{5} + c_{7} \\
-c_{0} - c_{2} + c_{4} - c_{6} \\
-c_{1} - c_{3} + c_{5} - c_{7} \\
c_{0} - c_{2} - c_{4} + c_{6} \\
c_{1} - c_{3} - c_{5} + c_{7} \\
-c_{0} + c_{2} - c_{4} - c_{6}
\end{pmatrix}$ \\ 
\hline 
$(\zeta^{7}, \zeta^{9})$ & $\begin{pmatrix}
0 & -1 & 0 & 0 & 0 & 0 & 0 & -1 & c_{0} \\
1 & 0 & -1 & 0 & 0 & 0 & 0 & 0 & c_{1} \\
0 & 1 & 0 & -1 & 0 & 0 & 0 & 0 & c_{2} \\
0 & 0 & 1 & 0 & -1 & 0 & 0 & 0 & c_{3} \\
0 & 0 & 0 & 1 & 0 & -1 & 0 & 0 & c_{4} \\
0 & 0 & 0 & 0 & 1 & 0 & -1 & 0 & c_{5} \\
0 & 0 & 0 & 0 & 0 & 1 & 0 & -1 & c_{6} \\
1 & 0 & 0 & 0 & 0 & 0 & 1 & 0 & c_{7}
\end{pmatrix}$ & $4$ & $\frac{1}{2} \begin{pmatrix}
c_{1} + c_{3} + c_{5} + c_{7} \\
-c_{0} + c_{2} + c_{4} + c_{6} \\
-c_{1} + c_{3} + c_{5} + c_{7} \\
-c_{0} - c_{2} + c_{4} + c_{6} \\
-c_{1} - c_{3} + c_{5} + c_{7} \\
-c_{0} - c_{2} - c_{4} + c_{6} \\
-c_{1} - c_{3} - c_{5} + c_{7} \\
-c_{0} - c_{2} - c_{4} - c_{6}
\end{pmatrix}$ \\ 
\hline 
$(\zeta^{7}, \zeta^{11})$ & $\begin{pmatrix}
0 & -1 & 0 & 0 & 0 & -1 & 0 & 0 & c_{0} \\
0 & 0 & -1 & 0 & 0 & 0 & -1 & 0 & c_{1} \\
0 & 0 & 0 & -1 & 0 & 0 & 0 & -1 & c_{2} \\
1 & 0 & 0 & 0 & -1 & 0 & 0 & 0 & c_{3} \\
0 & 1 & 0 & 0 & 0 & -1 & 0 & 0 & c_{4} \\
0 & 0 & 1 & 0 & 0 & 0 & -1 & 0 & c_{5} \\
0 & 0 & 0 & 1 & 0 & 0 & 0 & -1 & c_{6} \\
1 & 0 & 0 & 0 & 1 & 0 & 0 & 0 & c_{7}
\end{pmatrix}$ & $16$ & $\frac{1}{2} \begin{pmatrix}
c_{3} + c_{7} \\
-c_{0} + c_{4} \\
-c_{1} + c_{5} \\
-c_{2} + c_{6} \\
-c_{3} + c_{7} \\
-c_{0} - c_{4} \\
-c_{1} - c_{5} \\
-c_{2} - c_{6}
\end{pmatrix}$ \\ 
\hline 
$(\zeta^{7}, \zeta^{13})$ & $\begin{pmatrix}
0 & -1 & 0 & -1 & 0 & 0 & 0 & 0 & c_{0} \\
0 & 0 & -1 & 0 & -1 & 0 & 0 & 0 & c_{1} \\
0 & 0 & 0 & -1 & 0 & -1 & 0 & 0 & c_{2} \\
0 & 0 & 0 & 0 & -1 & 0 & -1 & 0 & c_{3} \\
0 & 0 & 0 & 0 & 0 & -1 & 0 & -1 & c_{4} \\
1 & 0 & 0 & 0 & 0 & 0 & -1 & 0 & c_{5} \\
0 & 1 & 0 & 0 & 0 & 0 & 0 & -1 & c_{6} \\
1 & 0 & 1 & 0 & 0 & 0 & 0 & 0 & c_{7} \\
\end{pmatrix}$ & $4$ & $\frac{1}{2} \begin{pmatrix}
c_{1} - c_{3} + c_{5} + c_{7} \\
-c_{0} + c_{2} - c_{4} + c_{6} \\
-c_{1} + c_{3} - c_{5} + c_{7} \\
-c_{0} - c_{2} + c_{4} - c_{6} \\
-c_{1} - c_{3} + c_{5} - c_{7} \\
c_{0} - c_{2} - c_{4} + c_{6} \\
c_{1} - c_{3} - c_{5} + c_{7} \\
-c_{0} + c_{2} - c_{4} - c_{6}
\end{pmatrix}$ \\ 
\hline 
$(\zeta^{7}, \zeta^{15})$ & $\begin{pmatrix}
0 & -2 & 0 & 0 & 0 & 0 & 0 & 0 & c_{0} \\
0 & 0 & -2 & 0 & 0 & 0 & 0 & 0 & c_{1} \\
0 & 0 & 0 & -2 & 0 & 0 & 0 & 0 & c_{2} \\
0 & 0 & 0 & 0 & -2 & 0 & 0 & 0 & c_{3} \\
0 & 0 & 0 & 0 & 0 & -2 & 0 & 0 & c_{4} \\
0 & 0 & 0 & 0 & 0 & 0 & -2 & 0 & c_{5} \\
0 & 0 & 0 & 0 & 0 & 0 & 0 & -2 & c_{6} \\
2 & 0 & 0 & 0 & 0 & 0 & 0 & 0 & c_{7}
\end{pmatrix}$ & $256$ & $\frac{1}{2} \begin{pmatrix}
c_{7} \\
-c_{0} \\
-c_{1} \\
-c_{2} \\
-c_{3} \\
-c_{4} \\
-c_{5} \\
-c_{6}
\end{pmatrix}$ \\ 
\hline 
$(\zeta^{9}, \zeta^{11})$ & $\begin{pmatrix}
0 & 0 & 0 & 0 & 0 & -1 & 0 & 1 & c_{0} \\
-1 & 0 & 0 & 0 & 0 & 0 & -1 & 0 & c_{1} \\
0 & -1 & 0 & 0 & 0 & 0 & 0 & -1 & c_{2} \\
1 & 0 & -1 & 0 & 0 & 0 & 0 & 0 & c_{3} \\
0 & 1 & 0 & -1 & 0 & 0 & 0 & 0 & c_{4} \\
0 & 0 & 1 & 0 & -1 & 0 & 0 & 0 & c_{5} \\
0 & 0 & 0 & 1 & 0 & -1 & 0 & 0 & c_{6} \\
0 & 0 & 0 & 0 & 1 & 0 & -1 & 0 & c_{7}
\end{pmatrix}$ & $4$ & $\frac{1}{2} \begin{pmatrix}
-c_{1} + c_{3} + c_{5} + c_{7} \\
-c_{0} - c_{2} + c_{4} + c_{6} \\
-c_{1} - c_{3} + c_{5} + c_{7} \\
-c_{0} - c_{2} - c_{4} + c_{6} \\
-c_{1} - c_{3} - c_{5} + c_{7} \\
-c_{0} - c_{2} - c_{4} - c_{6} \\
-c_{1} - c_{3} - c_{5} - c_{7} \\
c_{0} - c_{2} - c_{4} - c_{6}
\end{pmatrix}$ \\ 
\hline 
$(\zeta^{9}, \zeta^{13})$ & $\begin{pmatrix}
0 & 0 & 0 & -1 & 0 & 0 & 0 & 1 & c_{0} \\
-1 & 0 & 0 & 0 & -1 & 0 & 0 & 0 & c_{1} \\
0 & -1 & 0 & 0 & 0 & -1 & 0 & 0 & c_{2} \\
0 & 0 & -1 & 0 & 0 & 0 & -1 & 0 & c_{3} \\
0 & 0 & 0 & -1 & 0 & 0 & 0 & -1 & c_{4} \\
1 & 0 & 0 & 0 & -1 & 0 & 0 & 0 & c_{5} \\
0 & 1 & 0 & 0 & 0 & -1 & 0 & 0 & c_{6} \\
0 & 0 & 1 & 0 & 0 & 0 & -1 & 0 & c_{7}
\end{pmatrix}$ & $16$ & $\frac{1}{2} \begin{pmatrix}
-c_{1} + c_{5} \\
-c_{2} + c_{6} \\
-c_{3} + c_{7} \\
-c_{0} - c_{4} \\
-c_{1} - c_{5} \\
-c_{2} - c_{6} \\
-c_{3} - c_{7} \\
c_{0} - c_{4}
\end{pmatrix}$ \\ 
\hline 
$(\zeta^{9}, \zeta^{15})$ & $\begin{pmatrix}
0 & -1 & 0 & 0 & 0 & 0 & 0 & 1 & c_{0} \\
-1 & 0 & -1 & 0 & 0 & 0 & 0 & 0 & c_{1} \\
0 & -1 & 0 & -1 & 0 & 0 & 0 & 0 & c_{2} \\
0 & 0 & -1 & 0 & -1 & 0 & 0 & 0 & c_{3} \\
0 & 0 & 0 & -1 & 0 & -1 & 0 & 0 & c_{4} \\
0 & 0 & 0 & 0 & -1 & 0 & -1 & 0 & c_{5} \\
0 & 0 & 0 & 0 & 0 & -1 & 0 & -1 & c_{6} \\
1 & 0 & 0 & 0 & 0 & 0 & -1 & 0 & c_{7}
\end{pmatrix}$ & $4$ & $\frac{1}{2} \begin{pmatrix}
-c_{1} + c_{3} - c_{5} + c_{7} \\
-c_{0} - c_{2} + c_{4} - c_{6} \\
-c_{1} - c_{3} + c_{5} - c_{7} \\
c_{0} - c_{2} - c_{4} + c_{6} \\
c_{1} - c_{3} - c_{5} + c_{7} \\
-c_{0} + c_{2} - c_{4} - c_{6} \\
-c_{1} + c_{3} - c_{5} - c_{7} \\
c_{0} - c_{2} + c_{4} - c_{6}
\end{pmatrix}$ \\ 
\hline 
$(\zeta^{11}, \zeta^{13})$ & $\begin{pmatrix}
0 & 0 & 0 & -1 & & 1 & 0 & 0 & c_{0} \\
0 & 0 & 0 & 0 & -1 & 0 & 1 & 0 & c_{1} \\
0 & 0 & 0 & 0 & 0 & -1 & 0 & 1 & c_{2} \\
-1 & 0 & 0 & 0 & 0 & 0 & -1 & 0 & c_{3} \\
0 & -1 & 0 & 0 & 0 & 0 & 0 & -1 & c_{4} \\
1 & 0 & -1 & 0 & 0 & 0 & 0 & 0 & c_{5} \\
0 & 1 & 0 & -1 & 0 & 0 & 0 & 0 & c_{6} \\
0 & 0 & 1 & 0 & -1 & 0 & 0 & 0 & c_{7}
\end{pmatrix}$ & $4$ & $\frac{1}{2} \begin{pmatrix}
-c_{1} - c_{3} + c_{5} + c_{7} \\
-c_{0} - c_{2} - c_{4} + c_{6} \\
-c_{1} - c_{3} - c_{5} + c_{7} \\
-c_{0} - c_{2} - c_{4} - c_{6} \\
-c_{1} - c_{3} - c_{5} - c_{7} \\
c_{0} - c_{2} - c_{4} - c_{6} \\
c_{1} - c_{3} - c_{5} - c_{7} \\
c_{0} + c_{2} - c_{4} - c_{6}
\end{pmatrix}$ \\ 
\hline 
$(\zeta^{11}, \zeta^{15})$ & $\begin{pmatrix}
0 & -1 & 0 & 0 & 0 & 1 & 0 & 0 & c_{0} \\
0 & 0 & -1 & 0 & 0 & 0 & 1 & 0 & c_{1} \\
0 & 0 & 0 & -1 & 0 & 0 & 0 & 1 & c_{2} \\
-1 & 0 & 0 & 0 & -1 & 0 & 0 & 0 & c_{3} \\
0 & -1 & 0 & 0 & 0 & -1 & 0 & 0 & c_{4} \\
0 & 0 & -1 & 0 & 0 & 0 & -1 & 0 & c_{5} \\
0 & 0 & 0 & -1 & 0 & 0 & 0 & -1 & c_{6} \\
1 & 0 & 0 & 0 & -1 & 0 & 0 & 0 & c_{7}
\end{pmatrix}$ & $16$ & $\frac{1}{2} \begin{pmatrix}
-c_{3} + c_{7} \\
-c_{0} - c_{4} \\
-c_{1} - c_{5} \\
-c_{2} - c_{6} \\
-c_{3} - c_{7} \\
c_{0} - c_{4} \\
c_{1} - c_{5} \\
c_{2} - c_{6}
\end{pmatrix}$ \\ 
\hline 
$(\zeta^{13}, \zeta^{15})$ & $\begin{pmatrix}
0 & -1 & 0 & 1 & 0 & 0 & 0 & 0 & c_{0} \\
0 & 0 & -1 & 0 & 1 & 0 & 0 & 0 & c_{1} \\
0 & 0 & 0 & -1 & 0 & 1 & 0 & 0 & c_{2} \\
0 & 0 & 0 & 0 & -1 & 0 & 1 & 0 & c_{3} \\
0 & 0 & 0 & 0 & 0 & -1 & 0 & 1 & c_{4} \\
-1 & 0 & 0 & 0 & 0 & 0 & -1 & 0 & c_{5} \\
0 & -1 & 0 & 0 & 0 & 0 & 0 & -1 & c_{6} \\
1 & 0 & -1 & 0 & 0 & 0 & 0 & 0 & c_{7}
\end{pmatrix}$ & $4$ & $\frac{1}{2} \begin{pmatrix}
-c_{1} - c_{3} - c_{5} + c_{7} \\
-c_{0} - c_{2} - c_{4} - c_{6} \\
-c_{1} - c_{3} - c_{5} - c_{7} \\
c_{0} - c_{2} - c_{4} - c_{6} \\
c_{1} - c_{3} - c_{5} - c_{7} \\
c_{0} + c_{2} - c_{4} - c_{6} \\
c_{1} + c_{3} - c_{5} - c_{7} \\
c_{0} + c_{2} + c_{4} - c_{6}
\end{pmatrix}$ \\ 
\hline 
\end{longtable}

\small\addtolength{\tabcolsep}{-2pt}
\begin{longtable}{|c|c|c|c|}
\caption{\textbf{For} $\bm{(k,p) = (2,3),}$ $\bm{n = p^{k} = 9,}$ $\bm{\phi(9) = |\{1, 2, 4, 5, 7, 8\}| = 6,}$  $\bm{\Phi_{9}(x) = x^{6} + x^{3} + 1.}$}
\label{table 4} \\
\hline 
$(\sigma(\zeta), \tau(\zeta))$ & $(A ~ C)$ & $\textbf{det}(A)$ & $C'$ \\ 
\hline 
$(\zeta, \zeta^{2})$ & $\begin{pmatrix}
0 & 0 & 0 & 0 & 1 & -1 & c_{0} \\
1 & 0 & 0 & 0 & 0 & 1 & c_{1} \\
-1 & 1 & 0 & 0 & 0 & 0 & c_{2} \\
0 & -1 & 1 & 0 & 1 & -1 & c_{3} \\
0 & 0 & -1 & 1 & 0 & 1 & c_{4} \\
0 & 0 & 0 & -1 & 1 & 0 & c_{5}
\end{pmatrix}$ & $3$ & $\frac{1}{3} \begin{pmatrix}
2 c_{0} + 2 c_{1} - c_{2} - c_{3} - c_{4} - c_{5} \\
2 c_{0} + 2 c_{1} + 2 c_{2} - c_{3} - c_{4} - c_{5} \\
-c_{0} + 2 c_{1} + 2 c_{2} + 2 c_{3} - c_{4} - c_{5} \\
c_{0} + c_{1} + c_{2} + c_{3} + c_{4} - 2 c_{5} \\
c_{0} + c_{1} + c_{2} + c_{3} + c_{4} + c_{5} \\
-2 c_{0} + c_{1} + c_{2} + c_{3} + c_{4} + c_{5}
\end{pmatrix}$ \\ 
\hline 
$(\zeta, \zeta^{4})$ & $\begin{pmatrix}
0 & 0 & 1 & 0 & 0 & -2 & c_{0} \\
1 & 0 & 0 & 1 & 0 & 0 & c_{1} \\
0 & 1 & 0 & 0 & 1 & 0 & c_{2} \\
0 & 0 & 2 & 0 & 0 & -1 & c_{3} \\
-1 & 0 & 0 & 2 & 0 & 0 & c_{4} \\
0 & -1 & 0 & 0 & 2 & 0 & c_{5} 
\end{pmatrix}$ & $27$ & $\frac{1}{3} \begin{pmatrix}
2 c_{1} - c_{4} \\
2 c_{2} - c_{5} \\
-c_{0} + 2 c_{3} \\
c_{1} + c_{4} \\
c_{2} + c_{5} \\
-2 c_{0} + c_{3}
\end{pmatrix}$ \\  
\hline 
$(\zeta, \zeta^{5})$ & $\begin{pmatrix}
0 & 1 & 0 & 0 & -1 & -1 & c_{0} \\
1 & 0 & 1 & 0 & 0 & -1 & c_{1} \\
0 & 1 & 0 & 1 & 0 & 0 & c_{2} \\
0 & 1 & 1 & 0 & 0 & -1 & c_{3} \\
0 & 0 & 1 & 1 & 0 & 0 & c_{4} \\
-1 & 0 & 0 & 1 & 1 & 0 & c_{5}
\end{pmatrix}$ & $3$ & $\frac{1}{3} \begin{pmatrix}
-c_{0} + 2 c_{1} + 2 c_{2} - c_{3} - c_{4} - c_{5} \\
-c_{0} - c_{1} + 2 c_{2} + 2 c_{3} - c_{4} - c_{5} \\
-c_{0} - c_{1} - c_{2} + 2 c_{3} + 2 c_{4} - c_{5} \\
c_{0} + c_{1} + c_{2} - 2 c_{3} + c_{4} + c_{5} \\
-2 c_{0} + c_{1} + c_{2} + c_{3} - 2 c_{4} + c_{5} \\
-2 c_{0} - 2 c_{1} + c_{2} + c_{3} + c_{4} - 2 c_{5}
\end{pmatrix}$ \\ 
\hline 
$(\zeta, \zeta^{7})$ & $\begin{pmatrix}
0 & 0 & -1 & 0 & 0 & -1 & c_{0} \\
2 & 0 & 0 & -1 & 0 & 0 & c_{1} \\
0 & 2 & 0 & 0 & -1 & 0 & c_{2} \\
0 & 0 & 1 & 0 & 0 & -2 & c_{3} \\
1 & 0 & 0 & 1 & 0 & 0 & c_{4} \\
0 & 1 & 0 & 0 & 1 & 0 & c_{5}
\end{pmatrix}$ & $27$ & $\frac{1}{3} \begin{pmatrix}
c_{1} + c_{4} \\
c_{2} + c_{5} \\
-2 c_{0} + c_{3} \\
-c_{1} + 2 c_{4} \\
-c_{2} + 2 c_{5} \\
-c_{0} - c_{3}
\end{pmatrix}$ \\ 
\hline 
$(\zeta, \zeta^{8})$ & $\begin{pmatrix}
0 & -1 & 0 & 0 & 0 & -1 & c_{0} \\
1 & 0 & -1 & 0 & 0 & 0 & c_{1} \\
1 & 1 & 0 & -1 & 0 & 0 & c_{2} \\
0 & 0 & 1 & 0 & -1 & -1 & c_{3} \\
0 & 0 & 0 & 1 & 0 & -1 & c_{4} \\
1 & 0 & 0 & 0 & 1 & 0 & c_{5}
\end{pmatrix}$ & $3$ & $\frac{1}{3} \begin{pmatrix}
 -c_{0} + 2 c_{1} - c_{2} + 2 c_{3} - c_{4} + 2 c_{5} \\
-c_{0} - c_{1} + 2 c_{2} - c_{3} + 2 c_{4} - c_{5} \\
-c_{0} - c_{1} - c_{2} + 2 c_{3} - c_{4} + 2 c_{5} \\
-2 c_{0} + c_{1} - 2 c_{2} + c_{3} + c_{4} + c_{5} \\
c_{0} - 2 c_{1} + c_{2} - 2 c_{3} + c_{4} + c_{5} \\
-2 c_{0} + c_{1} - 2 c_{2} + c_{3} - 2 c_{4} + c_{5}
\end{pmatrix}$ \\ 
\hline 
$(\zeta^{2}, \zeta^{4})$ & $\begin{pmatrix}
0 & 0 & 1 & 0 & -1 & -1 & c_{0} \\
0 & 0 & 0 & 1 & 0 & -1 & c_{1} \\
1 & 0 & 0 & 0 & 1 & 0 & c_{2} \\
0 & 1 & 1 & 0 & -1 & 0 & c_{3} \\
-1 & 0 & 1 & 1 & 0 & -1 & c_{4} \\
0 & -1 & 0 & 1 & 1 & 0 & c_{5}
\end{pmatrix}$ & $3$ & $\frac{1}{3} \begin{pmatrix}
c_{0} + c_{1} + c_{2} + c_{3} - 2 c_{4} + c_{5} \\
-2 c_{0} + c_{1} + c_{2} + c_{3} + c_{4} - 2 c_{5} \\
c_{0} - 2 c_{1} + c_{2} + c_{3} + c_{4} + c_{5} \\
-c_{0} + 2 c_{1} - c_{2} + 2 c_{3} - c_{4} + 2 c_{5} \\
-c_{0} - c_{1} + 2 c_{2} - c_{3} + 2 c_{4} - c_{5} \\
-c_{0} - c_{1} - c_{2} + 2 c_{3} - c_{4} + 2 c_{5}
\end{pmatrix}$ \\ 
\hline 
$(\zeta^{2}, \zeta^{5})$ & $\begin{pmatrix}
0 & 1 & 0 & 0 & -2 & 0 & c_{0} \\
0 & 0 & 1 & 0 & 0 & -2 & c_{1} \\
1 & 0 & 0 & 1 & 0 & 0 & c_{2} \\
0 & 2 & 0 & 0 & -1 & 0 & c_{3} \\
0 & 0 & 2 & 0 & 0 & -1 & c_{4} \\
-1 & 0 & 0 & 2 & 0 & 0 & c_{5}
\end{pmatrix}$ & $27$ & $\frac{1}{3} \begin{pmatrix}
2 c_{2} - c_{5} \\
-c_{0} + 2 c_{3} \\
-c_{1} + 2 c_{4} \\
c_{2} + c_{5} \\
-2 c_{0} + c_{3} \\
-2 c_{1} + c_{4}
\end{pmatrix}$ \\ 
\hline 
$(\zeta^{2}, \zeta^{7})$ & $\begin{pmatrix}
0 & 0 & -1 & 0 & -1 & 0 & c_{0} \\
1 & 0 & 0 & -1 & 0 & -1 & c_{1} \\
1 & 1 & 0 & 0 & -1 & 0 & c_{2} \\
0 & 1 & 0 & 0 & -1 & -1 & c_{3} \\
1 & 0 & 1 & 0 & 0 & -1 & c_{4} \\
0 & 1 & 0 & 1 & 0 & 0 & c_{5}
\end{pmatrix}$ & $3$ & $\frac{1}{3} \begin{pmatrix}
c_{0} + c_{1} + c_{2} - 2 c_{3} + c_{4} + c_{5} \\
-2 c_{0} + c_{1} + c_{2} + c_{3} - 2 c_{4} + c_{5} \\
-2 c_{0} - 2 c_{1} + c_{2} + c_{3} + c_{4} - 2 c_{5} \\
2 c_{0} - c_{1} - c_{2} - c_{3} + 2 c_{4} + 2 c_{5} \\
-c_{0} + 2 c_{1} - c_{2} - c_{3} - c_{4} + 2 c_{5} \\
-c_{0} - c_{1} + 2 c_{2} - c_{3} - c_{4} - c_{5}
\end{pmatrix}$ \\ 
\hline 
$(\zeta^{2}, \zeta^{8})$ & $\begin{pmatrix}
0 & -1 & 0 & 0 & -1 & 0 & c_{0} \\
0 & 0 & -1 & 0 & 0 & -1 & c_{1} \\
2 & 0 & 0 & -1 & 0 & 0 & c_{2} \\
0 & 1 & 0 & 0 & -2 & 0 & c_{3} \\
0 & 0 & 1 & 0 & 0 & -2 & c_{4} \\
1 & 0 & 0 & 1 & 0 & 0 & c_{5}
\end{pmatrix}$ & $27$ & $\frac{1}{3} \begin{pmatrix}
c_{2} + c_{5} \\
-2 c_{0} + c_{3} \\
-2 c_{1} + c_{4} \\
-c_{2} + 2 c_{5} \\
-c_{0} - c_{3} \\
-c_{1} - c_{4}
\end{pmatrix}$ \\  
\hline 
$(\zeta^{4}, \zeta^{5})$ & $\begin{pmatrix}
0 & 1 & -1 & 0 & -1 & 1 & c_{0} \\
0 & 0 & 1 & -1 & 0 & -1 & c_{1} \\
0 & 0 & 0 & 1 & -1 & 0 & c_{2} \\
0 & 1 & -1 & 0 & 0 & 0 & c_{3} \\
1 & 0 & 1 & -1 & 0 & 0 & c_{4} \\
-1 & 1 & 0 & 1 & -1 & 0 & c_{5}
\end{pmatrix}$ & $3$ & $\frac{1}{3} \begin{pmatrix}
-c_{0} - c_{1} + 2 c_{2} + 2 c_{3} + 2 c_{4} - c_{5} \\
-c_{0} - c_{1} - c_{2} + 2 c_{3} + 2 c_{4} + 2 c_{5} \\
-c_{0} - c_{1} - c_{2} - c_{3} + 2 c_{4} + 2 c_{5} \\
-2 c_{0} - 2 c_{1} + c_{2} + c_{3} + c_{4} + c_{5} \\
-2 c_{0} - 2 c_{1} - 2 c_{2} + c_{3} + c_{4} + c_{5} \\
c_{0} - 2 c_{1} - 2 c_{2} - 2 c_{3} + c_{4} + c_{5}
\end{pmatrix}$ \\ 
\hline 
$(\zeta^{4}, \zeta^{7})$ & $\begin{pmatrix}
0 & 0 & -2 & 0 & 0 & 1 & c_{0} \\
1 & 0 & 0 & -2 & 0 & 0 & c_{1} \\
0 & 1 & 0 & 0 & -2 & 0 & c_{2} \\
0 & 0 & -1 & 0 & 0 & -1 & c_{3} \\
2 & 0 & 0 & -1 & 0 & 0 & c_{4} \\
0 & 2 & 0 & 0 & -1 & 0 & c_{5} 
\end{pmatrix}$ & $27$ & $\frac{1}{3} \begin{pmatrix}
-c_{1} + 2 c_{4} \\
-c_{2} + 2 c_{5} \\
-c_{0} - c_{3} \\
-2 c_{1} + c_{4} \\
-2 c_{2} + c_{5} \\
c_{0} - 2 c_{3}
\end{pmatrix}$ \\ 
\hline 
$(\zeta^{4}, \zeta^{8})$ & $\begin{pmatrix}
0 & -1 & -1 & 0 & 0 & 1 & c_{0} \\
0 & 0 & -1 & -1 & 0 & 0 & c_{1} \\
1 & 0 & 0 & -1 & -1 & 0 & c_{2} \\
0 & 0 & -1 & 0 & -1 & 0 & c_{3} \\
1 & 0 & 0 & -1 & 0 & -1 & c_{4} \\
1 & 1 & 0 & 0 & -1 & 0 & c_{5}
\end{pmatrix}$ & $3$ & $\frac{1}{3} \begin{pmatrix}
2 c_{0} - c_{1} - c_{2} - c_{3} + 2 c_{4} + 2 c_{5} \\
- c_{0} + 2 c_{1} - c_{2} - c_{3} - c_{4} + 2 c_{5} \\
-c_{0} - c_{1} + 2 c_{2} - c_{3} - c_{4} - c_{5} \\
c_{0} - 2 c_{1} - 2 c_{2} + c_{3} + c_{4} + c_{5} \\
c_{0} + c_{1} - 2 c_{2} - 2 c_{3} + c_{4} + c_{5} \\
c_{0} + c_{1} + c_{2} - 2 c_{3} - 2 c_{4} + c_{5}
\end{pmatrix}$ \\ 
\hline 
$(\zeta^{5}, \zeta^{7})$ & $\begin{pmatrix}
0 & -1 & -1 & 0 & 1 & 0 & c_{0} \\
1 & 0 & -1 & -1 & 0 & 1 & c_{1} \\
0 & 1 & 0 & -1 & -1 & 0 & c_{2} \\
0 & -1 & 0 & 0 & 0 & -1 & c_{3} \\
1 & 0 & -1 & 0 & 0 & 0 & c_{4} \\
1 & 1 & 0 & -1 & 0 & 0 & c_{5}
\end{pmatrix}$ & $3$ & $\frac{1}{3} \begin{pmatrix}
-2 c_{0} + c_{1} - 2 c_{2} + c_{3} + c_{4} + c_{5} \\
c_{0} - 2 c_{1} + c_{2} - 2 c_{3} + c_{4} + c_{5} \\
-2 c_{0} + c_{1} - 2 c_{2} + c_{3} - 2 c_{4} + c_{5} \\
-c_{0} - c_{1} - c_{2} - c_{3} + 2 c_{4} - c_{5} \\
2 c_{0} - c_{1} - c_{2} - c_{3} - c_{4} + 2 c_{5} \\
-c_{0} + 2 c_{1} - c_{2} - c_{3} - c_{4} - c_{5}
\end{pmatrix}$ \\ 
\hline 
$(\zeta^{5}, \zeta^{8})$ & $\begin{pmatrix}
0 & -2 & 0 & 0 & 1 & 0 & c_{0} \\
0 & 0 & -2 & 0 & 0 & 1 & c_{1} \\
1 & 0 & 0 & -2 & 0 & 0 & c_{2} \\
0 & -1 & 0 & 0 & -1 & 0 & c_{3} \\
0 & 0 & -1 & 0 & 0 & -1 & c_{4} \\
2 & 0 & 0 & -1 & 0 & 0 & c_{5}
\end{pmatrix}$ & $27$ & $\frac{1}{3} \begin{pmatrix}
-c_{2} + 2 c_{5} \\
-c_{0} - c_{3} \\
-c_{1} - c_{4} \\
-2 c_{2} + c_{5} \\
c_{0} - 2 c_{3} \\
c_{1} - 2 c_{4}
\end{pmatrix}$ \\ 
\hline 
$(\zeta^{7}, \zeta^{8})$ & $\begin{pmatrix}
0 & -1 & 1 & 0 & 0 & 0 & c_{0} \\
-1 & 0 & -1 & 1 & 0 & 0 & c_{1} \\
1 & -1 & 0 & -1 & 1 & 0 & c_{2} \\
0 & 0 & 0 & 0 & -1 & 1 & c_{3} \\
-1 & 0 & 0 & 0 & 0 & -1 & c_{4} \\
1 & -1 & 0 & 0 & 0 & 0 & c_{5}
\end{pmatrix}$ & $3$ & $\frac{1}{3} \begin{pmatrix}
-c_{0} - c_{1} - c_{2} - c_{3} - c_{4} + 2 c_{5} \\
-c_{0} - c_{1} - c_{2} - c_{3} - c_{4} - c_{5} \\
2 c_{0} - c_{1} - c_{2} - c_{3} - c_{4} - c_{5} \\
c_{0} + c_{1} - 2 c_{2} - 2 c_{3} - 2 c_{4} + c_{5} \\
c_{0} + c_{1} + c_{2} - 2 c_{3} - 2 c_{4} - 2 c_{5} \\
c_{0} + c_{1} + c_{2} + c_{3} - 2 c_{4} - 2 c_{5}
\end{pmatrix}$ \\ 
\hline 
\end{longtable}

\section{Outer $(\sigma, \tau)$-Derivations of the Ring of Algebraic Integers of Cyclotomic Number Fields}\label{section 5}
In this section, we assume the same notations as in Section \ref{section 4}. That is, $\mathbb{K} = \mathbb{Q}(\zeta)$ is an $n^{\text{th}}$-cyclotomic number field, and $O_{\mathbb{K}} = \mathbb{Z}[\zeta]$ is its ring of algebraic integers, and $\sigma, \tau:O_{\mathbb{K}} \rightarrow O_{\mathbb{K}}$ with $\sigma(\zeta) = \zeta^{u}$ and $\tau(\zeta) = \zeta^{v}$ for some $u, v \in U(n)$ with $u \neq v$, are two different non-zero $\mathbb{Z}$-algebra endomorphisms.  

Finding the non-zero outer derivations in rings and algebras has been considered an important problem. Based on the findings of sections \ref{section 3} and \ref{section 4}, in this section, we conjecture a necessary and sufficient condition for a $(\sigma, \tau)$-derivation $D: O_{\mathbb{K}} \rightarrow O_{\mathbb{K}}$ to be outer. Again, we do that for two different forms of $n$, namely, $n=2^{r}p$ ($r \in \mathbb{N}$) for an odd rational prime $p$ and $n = p^{k}$ ($k \in \mathbb{N} \setminus \{1\}$) for any rational prime $p$. In this way, we conjecture the existence and non-existence of non-zero outer twisted derivations of $O_{\mathbb{K}}$ and hence conjecture a solution of the twisted derivation problem for $O_{\mathbb{K}}$ for these two forms of $n$.

\subsection{For $n=2^{r}p$, $r \in \mathbb{N}$ and $p$ an odd rational prime}\label{subsection 5.1}
The following \th\ref{conjecture 5.1} gives the existence and non-existence of non-zero outer $(\sigma, \tau)$-derivations of $O_{\mathbb{K}}$, when $n$ has the form $n=2^{r}p$, where $r \in \mathbb{N}$ and $p$ is an odd rational prime.

\begin{conjecture}\th\label{conjecture 5.1}
Under the hypotheses of \th\ref{conjecture 4.1}, the following statements hold for a $(\sigma, \tau)$-derivation $D:O_{\mathbb{K}} \rightarrow O_{\mathbb{K}}$ with $D(\zeta) = \sum_{i=0}^{2^{r-1}(p-1)-1} c_{i} \zeta^{i} \in O_{\mathbb{K}}$.
\begin{enumerate}
\item[(i)] If $1 \leq e_{1} \leq r-1$ and $e_{2} \geq 1$, then $D$ is outer if and only if $\frac{1}{2^{2^{i}(p-1)}}(Adj(A)C) \notin \mathbb{Z}^{2^{r}(p-1)}$. In particular, $\text{Inn}_{(\sigma, \tau)}(O_{\mathbb{K}}) \subsetneq \mathcal{D}_{(\sigma, \tau)}(O_{\mathbb{K}})$, that is, $O_{\mathbb{K}}$ has non-zero outer $(\sigma, \tau)$-derivations.
\item[(ii)] If $e_{1} \geq r$ and $e_{2} = 0$, then $D$ is outer if and only if $\frac{1}{p^{2^{r-1}}}(Adj(A)C) \notin \mathbb{Z}^{2^{r}(p-1)}$. In particular, $\text{Inn}_{(\sigma, \tau)}(O_{\mathbb{K}}) \subsetneq \mathcal{D}_{(\sigma, \tau)}(O_{\mathbb{K}})$, that is, $O_{\mathbb{K}}$ has non-zero outer $(\sigma, \tau)$-derivations.
\item[(iii)] In all other cases, $D$ is always inner. In particular, $\mathcal{D}_{(\sigma, \tau)}(O_{\mathbb{K}}) = \text{Inn}_{(\sigma, \tau)}(O_{\mathbb{K}})$, that is, $O_{\mathbb{K}}$ has no non-zero outer $(\sigma, \tau)$-derivations, that is, all $(\sigma, \tau)$-derivations of $O_{\mathbb{K}}$ are inner.
\end{enumerate}
\end{conjecture}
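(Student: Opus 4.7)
The plan is to derive \th\ref{conjecture 5.1} as a structural consequence of the two preceding conjectures together with \th\ref{corollary 3.7}, rather than as an independent calculation. The guiding observation is that the entire ``inner versus outer'' dichotomy in this setup is governed by a single cokernel.

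First I would set up the framework. By \th\ref{corollary 3.7}, $\mathcal{D}_{(\sigma,\tau)}(O_{\mathbb{K}})$ is a free $\mathbb{Z}$-module of rank $\phi(n)$, and the construction used there shows that the evaluation $D \mapsto D(\zeta)$ is a $\mathbb{Z}$-linear isomorphism from $\mathcal{D}_{(\sigma,\tau)}(O_{\mathbb{K}})$ onto $O_{\mathbb{K}}$: by \th\ref{theorem 3.2}, any $\gamma \in O_{\mathbb{K}}$ may be promoted to a derivation via $D(\zeta) := \gamma$ extended through the equations (\ref{eq 3.1}). By \th\ref{lemma 2.6}, a derivation is inner iff $D(\zeta) \in (\tau-\sigma)(\zeta)O_{\mathbb{K}}$. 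The matrix $A$ of \th\ref{conjecture 4.1} is precisely the matrix representing multiplication by $(\tau-\sigma)(\zeta) = \zeta^{u}(\zeta^{v-u}-1)$ on $O_{\mathbb{K}}$ in the power basis $\{1, \zeta, \ldots, \zeta^{\phi(n)-1}\}$, so the identification $D \leftrightarrow D(\zeta)$ induces an isomorphism $\mathcal{D}_{(\sigma,\tau)}(O_{\mathbb{K}})/\text{Inn}_{(\sigma,\tau)}(O_{\mathbb{K}}) \cong O_{\mathbb{K}}/(\tau-\sigma)(\zeta)O_{\mathbb{K}}$ of finite abelian groups whose order equals $|\det(A)|$.

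Next, the ``if and only if'' parts of clauses (i) and (ii) of \th\ref{conjecture 5.1} are the literal contrapositives of the corresponding clauses of \th\ref{conjecture 4.2} and so require no additional argument. The existence-of-outer-derivations claims in (i) and (ii) and the non-existence claim in (iii) then fall out from \th\ref{conjecture 4.1} via the framework above. In case (iii), $\det(A) = \pm 1$, so $A$ is unimodular, $AX = C$ is solvable in $\mathbb{Z}^{\phi(n)}$ for every $C$, and every derivation is inner. In cases (i) and (ii), $|\det(A)|$ equals $2^{2^{e_{1}}(p-1)}$ or $p^{2^{r-1}}$ respectively, strictly exceeding $1$; hence the cokernel $O_{\mathbb{K}}/(\tau-\sigma)(\zeta)O_{\mathbb{K}}$ is non-trivial, and picking any $\gamma \in O_{\mathbb{K}}$ with non-zero class (one may always take $\gamma = \zeta^{j}$ for a suitable $j$, since the basis elements $1, \zeta, \ldots, \zeta^{\phi(n)-1}$ cannot all lie in a proper ideal) yields a $(\sigma,\tau)$-derivation with $D(\zeta) = \gamma$ that is provably outer.

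The principal obstacle lies not in this derivation --- which amounts to contraposition plus the elementary theory of lattice cokernels --- but in proving the underlying \th\ref{conjecture 4.1} and \th\ref{conjecture 4.2}. A uniform attack on \th\ref{conjecture 4.1} would most naturally proceed through the identity $|\det(A)| = |N_{\mathbb{K}/\mathbb{Q}}((\tau-\sigma)(\zeta))| = |N_{\mathbb{K}/\mathbb{Q}}(\zeta^{v-u}-1)|$ (the first equality because $A$ represents multiplication on $O_{\mathbb{K}}$, the second because $\zeta^{u}$ is a unit), followed by a case analysis on the factorization $v-u = 2^{e_{1}}p^{e_{2}}m$ using \th\ref{lemma 2.8} and the classical evaluations of cyclotomic polynomials at $1$. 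Once the correct powers of $2$ and $p$ are extracted in each regime $(e_{1}, e_{2})$, \th\ref{conjecture 4.1} follows, and \th\ref{conjecture 4.2} then requires in addition an analysis of the Smith normal form of $A$ to justify the refinements ``$2 \mid c_{i}$ suffices'' and ``$p \mid c_{i}$ suffices''; with both in hand, \th\ref{conjecture 5.1} is immediate.
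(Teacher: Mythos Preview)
Your derivation is correct and follows the same route the paper indicates: the paper's entire justification is the single sentence ``The above \th\ref{conjecture 5.1} is formed in view of \th\ref{corollary 3.7} and \th\ref{conjecture 4.2},'' and you have simply made that sentence precise via the cokernel identification $\mathcal{D}_{(\sigma,\tau)}/\mathrm{Inn}_{(\sigma,\tau)} \cong O_{\mathbb{K}}/(\tau-\sigma)(\zeta)O_{\mathbb{K}}$ of order $|\det A|$. Your additional norm-based sketch for attacking \th\ref{conjecture 4.1} and the Smith-normal-form remark for \th\ref{conjecture 4.2} go beyond anything the paper attempts, since those remain open conjectures there supported only by the SAGEMATH tables.
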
 

The above \th\ref{conjecture 5.1} is formed in view of Remark \ref{remark}, \th\ref{corollary 3.7}, and \th\ref{conjecture 4.2}.

\subsection{For $n = p^{k}$, $k \in \mathbb{N} \setminus \{1\}$ and $p$ a rational prime}\label{subsection 5.2}
The following \th\ref{conjecture 5.2} gives the existence and non-existence of non-zero outer $(\sigma, \tau)$-derivations of $O_{\mathbb{K}}$, when $n$ has the form $n=p^{k}$, where $k \in \mathbb{N} \setminus \{1\}$ and $p$ is any rational prime.

\begin{conjecture}\th\label{conjecture 5.2}
Under the hypotheses of \th\ref{conjecture 4.3}, a $(\sigma, \tau)$-derivation $D:O_{\mathbb{K}} \rightarrow O_{\mathbb{K}}$ with $D(\zeta) = \sum_{i=0}^{p^{k-1}(p-1)-1} c_{i} \zeta^{i}$, is outer if and only if $\frac{1}{p^{e_{1}}}(Adj(A)C) \notin \mathbb{Z}^{p^{k-1}(p-1)}$. In particular, $\text{Inn}_{(\sigma, \tau)}(O_{\mathbb{K}}) \subsetneq \mathcal{D}_{(\sigma, \tau)}(O_{\mathbb{K}})$, that is, $O_{\mathbb{K}}$ has non-zero outer derivations.
\end{conjecture}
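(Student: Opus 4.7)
The plan is to split \th\ref{conjecture 5.2} into its two assertions and handle them separately. The biconditional characterisation is the logical contrapositive of \th\ref{conjecture 4.4}, while the ``in particular'' existence claim admits an independent proof that uses only \th\ref{corollary 3.7} together with a classical norm computation in the cyclotomic tower.

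Assuming \th\ref{conjecture 4.4}, the biconditional is immediate: $D$ is outer iff it is not inner, iff $\frac{1}{p^{e_1}}(\mathrm{Adj}(A)C)\notin\mathbb{Z}^{p^{k-1}(p-1)}$. All of the work for this half is subsumed in the proof of \th\ref{conjecture 4.4}, which in turn rests on the determinant formula of \th\ref{conjecture 4.3}.

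For the existence of a nonzero outer derivation, set $\eta:=(\tau-\sigma)(\zeta)=\zeta^v-\zeta^u$, which is nonzero in $O_{\mathbb{K}}$ since $u\ne v$ in $U(n)$. \th\ref{lemma 2.6} identifies the inner $(\sigma,\tau)$-derivations with the principal ideal $\eta\,O_{\mathbb{K}}\subseteq O_{\mathbb{K}}$ under the bijection $\beta\leftrightarrow(\zeta\mapsto\beta\eta)$. The matrix $A$ of \th\ref{conjecture 4.3} is precisely the matrix of left multiplication by $\eta$ relative to the power basis $\{1,\zeta,\dots,\zeta^{\phi(n)-1}\}$, so $\det A=N_{\mathbb{K}/\mathbb{Q}}(\eta)$. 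Writing $\eta=\zeta^u(\zeta^{v-u}-1)$, using $N(\zeta^u)=1$, and observing that $\omega:=\zeta^{v-u}$ is a primitive $p^{k-e_1}$-th root of unity (since $v-u=p^{e_1}m$ with $\gcd(m,p)=1$, and $0<|v-u|<p^k$ forces $e_1<k$), the tower formula together with $\Phi_{p^s}(1)=p$ from \th\ref{lemma 2.8} yields
\[ N_{\mathbb{K}/\mathbb{Q}}(\eta)=\pm\,N_{\mathbb{Q}(\omega)/\mathbb{Q}}(\omega-1)^{[\mathbb{K}:\mathbb{Q}(\omega)]}=\pm\,\Phi_{p^{k-e_1}}(1)^{p^{e_1}}=\pm\,p^{p^{e_1}}. \]
By \th\ref{corollary 3.7} every $C\in\mathbb{Z}^{\phi(n)}$ is the coordinate vector of $D(\zeta)$ for a unique $(\sigma,\tau)$-derivation $D$, so it suffices to exhibit $C\in\mathbb{Z}^{\phi(n)}\setminus A\mathbb{Z}^{\phi(n)}$. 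Since $|\det A|=p^{p^{e_1}}\ge p\ge 2$, the cokernel $\mathbb{Z}^{\phi(n)}/A\mathbb{Z}^{\phi(n)}$ is a non-trivial finite abelian group of order $p^{p^{e_1}}$ (by the Smith normal form), so such a $C$ exists; the associated derivation $D$ is then outer. An explicit choice such as $D(\zeta)=1$ can be verified directly against any of the tables in Section \ref{section 4}.

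The main obstacle is that the biconditional half is conditional on \th\ref{conjecture 4.4}, whose proof appears to require a Smith-normal-form computation pinning down $A\mathbb{Z}^{\phi(n)}$ as the solution set of a single $p^{e_1}$-congruence. The existence half, by contrast, is essentially unconditional given \th\ref{corollary 3.7} and the norm calculation above. Consequently the genuinely difficult work lies not in the outer classification itself but in the finer inner characterisation encoded by $\mathrm{Adj}(A)$, where one would need to leverage the recursion $\Phi_{p^k}(x)=\Phi_p(x^{p^{k-1}})$ from \th\ref{lemma 2.8} and the tower $\mathbb{Q}\subset\mathbb{Q}(\zeta_{p^{k-e_1}})\subset\mathbb{K}$ to decompose $O_{\mathbb{K}}/\eta\,O_{\mathbb{K}}$ explicitly and recover the exponent $e_1$ governing the integrality criterion.
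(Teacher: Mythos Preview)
Your handling of the biconditional matches the paper exactly: the paper offers no proof of \th\ref{conjecture 5.2} beyond the single sentence ``formed in view of \th\ref{corollary 3.7} and \th\ref{conjecture 4.4}'', so the characterisation remains conditional on \th\ref{conjecture 4.4} there as well.

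Your treatment of the existence clause, however, goes substantially further than the paper. The paper infers $\text{Inn}_{(\sigma,\tau)}(O_{\mathbb{K}})\subsetneq\mathcal{D}_{(\sigma,\tau)}(O_{\mathbb{K}})$ only through the same conjectural chain, supported by SAGEMATH verification for a handful of pairs $(k,p)$. You instead give an unconditional argument: identifying $A$ with the regular representation of multiplication by $\eta=\zeta^{v}-\zeta^{u}$ on $O_{\mathbb{K}}$ and computing $\det A=N_{\mathbb{K}/\mathbb{Q}}(\eta)=\pm p^{p^{e_{1}}}$ via the tower $\mathbb{Q}\subset\mathbb{Q}(\zeta_{p^{k-e_{1}}})\subset\mathbb{K}$ together with $\Phi_{p^{s}}(1)=p$ from \th\ref{lemma 2.8}. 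This norm calculation in fact \emph{proves} the determinant formula of \th\ref{conjecture 4.3}, which the paper leaves as an open conjecture backed only by numerical evidence. Combining it with \th\ref{corollary 3.7} and the observation that $|\mathbb{Z}^{\phi(n)}/A\mathbb{Z}^{\phi(n)}|=|\det A|=p^{p^{e_{1}}}>1$ then yields outer derivations with no appeal to \th\ref{conjecture 4.4} at all. On the existence half you have therefore upgraded a computationally supported conjecture to a theorem.

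Your final paragraph correctly isolates what remains genuinely open: the specific exponent appearing in the integrality criterion is a statement about the elementary divisors of $A$ (equivalently, about the $\mathbb{Z}$-module structure of $O_{\mathbb{K}}/\eta\,O_{\mathbb{K}}$), and neither your norm argument nor anything in the paper settles it.
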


The above \th\ref{conjecture 5.2} is formed in view of Remark \ref{remark}, \th\ref{corollary 3.7}, and \th\ref{conjecture 4.4}.

\section{Application of Theorem \ref{theorem 3.2} in the Construction of Codes}\label{section 6}
In Section 4 of \cite{Manju2023b}, the authors have introduced the notion of Hom-IDD codes. In this section, we construct some binary Hom-IDD codes, thus giving the application of our main Theorem \ref{theorem 3.2} in the construction of codes in coding theory.

\begin{example}
Let $\mathbb{K} = \mathbb{Q}(\zeta)$, $\zeta$ primitive $n^{\text{th}}$-root of unity, where $n = 4p$ and $p=7$. Let $\sigma(\zeta) = \zeta$ and $\tau(\zeta) = \zeta^{3}$. Then by Theorem \ref{theorem 3.2} (in particular, Corollary \ref{corollary 3.7}), the map $D:O_{\mathbb{K}} \rightarrow O_{\mathbb{K}}$ given by $$D(\zeta) = 1 + \zeta + \zeta^{2} + \zeta^{5} + \zeta^{6} +  \zeta^{9} + \zeta^{11} $$ is a $(\sigma, \tau)$-derivation of $O_{\mathbb{K}} = \mathbb{Z}[\zeta]$. Also, $[\mathbb{K} : \mathbb{Q}] = \phi(n) = \phi(28) = 12$ and the $n^{\text{th}}$ cyclotomic polynomial is $\Phi_{n}(x) = \Phi_{28}(x) = x^{12} - x^{10} + x^{8} - x^{6} + x^{4} - x^{2} + 1$.

Below, we have obtained binary Hom-IDD codes of length $\phi(n) = \phi(28) = 12$ over the finite field $\mathbb{Z}_{2}$ with various parameters. Note that in the table below, $n'$ denotes the length of the code, $k'$ denotes the dimension of the code, and $d'$ denotes the distance of the code.

{\centering
\begin{longtable}{|c|c|c|c|}
\hline 
\textbf{Basis $S_{i}$} & \thead{\textbf{Code Description:}\\ $[n',k',d']$} & \textbf{Code Properties} & \thead{\textbf{Dual Code} \\ \textbf{Description:} \\ $[n',k',d']$} \\ 
\hline \hline
$S_{1}$ & $[12,6,3]$ & LCD & $[12,6,2]$ \\
\hline
$S_{2}$ & $[12,7,3]$ & LCD & $[12,5,3]$ \\
\hline
$S_{3}$ & $[12,5,4]$ & non-LCD \& Optimal & $[12,7,2]$ \\
\hline
$S_{4}$ & $[12,3,5]$ & LCD & \thead{$[12,9,2]$ \\ (optimal)} \\
\hline
$S_{5}$ & $[12,4,4]$ & LCD & $[12,8,2]$ \\
\hline
$S_{6}$ & $[12,4,5]$ & LCD & $[12,8,2]$ \\
\hline
$S_{7}$ & $[12,6,3]$ & non-LCD & $[12,6,3]$ \\
\hline
$S_{8}$ & $[12,8,2]$ & LCD & $[12,4,4]$ \\
\hline
$S_{9}$ & $[12,9,2]$ & non-LCD \& optimal & $[12,3,4]$ \\
\hline
$S_{10}$ & $[12,2,7]$ & non-LCD & $[12,10,1]$ \\
\hline
\end{longtable}}

{\centering
\begin{longtable}{|c|}
\hline 
\textbf{Basis $S_{i}$} \\ 
\hline \hline
$S_{1} = \{D(\zeta^{2}), D(\zeta^{4}), D(\zeta^{6}), D(\zeta^{8}), D(\zeta^{10}), D(\zeta^{11})\}$ \\ 
\hline 
$S_{2} = \{D(\zeta^{2}), D(\zeta^{4}), D(\zeta^{5}), D(\zeta^{6}), D(\zeta^{8}), D(\zeta^{10}), D(\zeta^{11})\}$ \\ 
\hline 
$S_{3} = \{D(\zeta^{2}), D(\zeta^{4}), D(\zeta^{6}), D(\zeta^{8}), D(\zeta^{10})\}$ \\ 
\hline 
$S_{4} = \{D(\zeta^{2}), D(\zeta^{4}), D(\zeta^{6})\}$ \\ 
\hline 
$S_{5} = \{D(\zeta^{2}), D(\zeta^{4}), D(\zeta^{6}), D(\zeta^{8})\}$ \\ 
\hline
$S_{6} = \{D(\zeta), D(\zeta^{5}), D(\zeta^{6}), D(\zeta^{11})\}$ \\ 
\hline
$S_{7} = \{D(\zeta), D(\zeta^{2}), D(\zeta^{5}), D(\zeta^{6}), D(\zeta^{9}), D(\zeta^{11})\}$ \\ 
\hline
$S_{8} = \{D(\zeta), D(\zeta^{2}), D(\zeta^{3}), D(\zeta^{5}), D(\zeta^{6}), D(\zeta^{8}), D(\zeta^{9}), D(\zeta^{11})\}$ \\ 
\hline 
$S_{9} = \{D(\zeta), D(\zeta^{2}), D(\zeta^{3}), D(\zeta^{5}), D(\zeta^{6}), D(\zeta^{8}), D(\zeta^{9}), D(\zeta^{10}), D(\zeta^{11})\}$ \\ 
\hline 
$S_{10} = \{D(\zeta), D(\zeta^{10})\}$ \\ 
\hline 
\end{longtable}}

A $5 \times 12$ generator matrix for the binary $[12,5,4]$ non-LCD optimal Hom-IDD code obtained using the basis $S_{3}$ is $$\begin{pmatrix}
1 & 1 & 1 & 0 & 1 & 1 & 1 & 1 & 1 & 1 & 1 & 0 \\
1 & 0 & 0 & 0 & 1 & 1 & 0 & 1 & 0 & 0 & 0 & 1 \\
0 & 0 & 1 & 1 & 0 & 1 & 1 & 0 & 0 & 1 & 1 & 0 \\
1 & 1 & 0 & 1 & 0 & 1 & 1 & 0 & 0 & 0 & 1 & 1 \\
1 & 0 & 0 & 1 & 1 & 1 & 1 & 0 & 1 & 1 & 1 & 0
\end{pmatrix}.$$
\end{example} \vspace{10pt}

\begin{example}
Let $\mathbb{K} = \mathbb{Q}(\zeta)$, $\zeta$ primitive $n^{\text{th}}$-root of unity, where $n = p^{5}$ and $p=2$. Let $\sigma(\zeta) = \zeta$ and $\tau(\zeta) = \zeta^{3}$. Then by Theorem \ref{theorem 3.2} (in particular, Corollary \ref{corollary 3.7}), the map $D:O_{\mathbb{K}} \rightarrow O_{\mathbb{K}}$ given by $$D(\zeta) = 1 + \zeta + \zeta^{2} + \zeta^{5} + \zeta^{7}$$ is a $(\sigma, \tau)$-derivation of $O_{\mathbb{K}} = \mathbb{Z}[\zeta]$. Also, $[\mathbb{K} : \mathbb{Q}] = \phi(n) = \phi(32) = 16$ and the $n^{\text{th}}$ cyclotomic polynomial is $\Phi_{n}(x) = \Phi_{32}(x) = x^{16} + 1$.

Below, we have obtained binary Hom-IDD codes of length $\phi(n) = \phi(32) = 16$ over the finite field $\mathbb{Z}_{2}$ with various parameters.

{\centering
\begin{longtable}{|c|c|c|c|}
\hline 
\textbf{Basis $S_{i}$} & \thead{\textbf{Code Description:}\\ $[n',k',d']$} & \textbf{Code Properties} & \thead{\textbf{Dual Code} \\ \textbf{Description:} \\ $[n',k',d']$} \\ 
\hline \hline
$S_{1}$ & $[16,11,1]$ & non-LCD & $[16,5,4]$ \\
\hline
$S_{2}$ & $[16,5,6]$ & non-LCD & $[16,11,2]$ \\
\hline
$S_{3}$ & $[16,6,4]$ & non-LCD & $[16,10,3]$ \\
\hline
$S_{4}$ & $[16,6,5]$ & LCD & $[16,10,1]$ \\
\hline
$S_{5}$ & $[16,7,5]$ & non-LCD & $[16,9,3]$ \\
\hline
$S_{6}$ & $[16,8,4]$ & non-LCD & $[16,8,4]$ \\
\hline
$S_{7}$ & $[16,9,3]$ & non-LCD & $[16,7,4]$ \\
\hline
$S_{8}$ & $[16,8,3]$ & non-LCD & $[16,8,4]$ \\
\hline
$S_{9}$ & $[16,5,5]$ & LCD & $[16,11,1]$ \\
\hline
$S_{10}$ & $[16,6,5]$ & non-LCD & $[16,10,2]$ \\
\hline
$S_{11}$ & $[16,4,6]$ & non-LCD & $[16,12,1]$ \\
\hline
$S_{12}$ & $[16,3,6]$ & non-LCD & $[16,13,1]$ \\
\hline
\end{longtable}}

{\centering
\begin{longtable}{|c|}
\hline 
\textbf{Basis $S_{i}$} \\ 
\hline \hline
$S_{1} = \{D(\zeta), D(\zeta^{2}), D(\zeta^{3}), D(\zeta^{4}), D(\zeta^{5}), D(\zeta^{6}), D(\zeta^{7}), D(\zeta^{9}), D(\zeta^{10}), D(\zeta^{11}), D(\zeta^{13})\}$ \\ 
\hline 
$S_{2} = \{D(\zeta^{2}), D(\zeta^{4}), D(\zeta^{6}), D(\zeta^{10}), D(\zeta^{11})\}$ \\ 
\hline 
$S_{3} = \{D(\zeta^{2}), D(\zeta^{4}), D(\zeta^{6}), D(\zeta^{10}), D(\zeta^{11}), D(\zeta^{13})\}$ \\ 
\hline 
$S_{4} = \{D(\zeta), D(\zeta^{3}), D(\zeta^{5}), D(\zeta^{7}), D(\zeta^{13}), D(\zeta^{15})\}$ \\ 
\hline
$S_{5} = \{D(\zeta), D(\zeta^{3}), D(\zeta^{5}), D(\zeta^{7}), D(\zeta^{9}), D(\zeta^{13}), D(\zeta^{15})\}$ \\ 
\hline
$S_{6} = \{D(\zeta), D(\zeta^{3}), D(\zeta^{5}), D(\zeta^{7}), D(\zeta^{8}), D(\zeta^{9}), D(\zeta^{13}), D(\zeta^{15})\}$ \\ 
\hline 
$S_{7} = \{D(\zeta), D(\zeta^{3}), D(\zeta^{5}), D(\zeta^{7}), D(\zeta^{8}), D(\zeta^{9}), D(\zeta^{12}), D(\zeta^{13}), D(\zeta^{15})\}$ \\ 
\hline 
$S_{8} = \{D(\zeta), D(\zeta^{3}), D(\zeta^{4}), D(\zeta^{5}), D(\zeta^{7}), D(\zeta^{8}), D(\zeta^{9}), D(\zeta^{11})\}$ \\ 
\hline 
$S_{9} = \{D(\zeta^{7}), D(\zeta^{9}), D(\zeta^{11}), D(\zeta^{13}), D(\zeta^{15})\}$ \\ 
\hline 
$S_{10} = \{D(\zeta^{2}), D(\zeta^{4}), D(\zeta^{6}), D(\zeta^{9}), D(\zeta^{10}), D(\zeta^{11})\}$ \\ 
\hline 
$S_{11} = \{D(\zeta^{2}), D(\zeta^{4}), D(\zeta^{6}), D(\zeta^{10})\}$ \\ 
\hline
$S_{12} = \{D(\zeta^{4}), D(\zeta^{6}), D(\zeta^{10})\}$ \\ 
\hline 
\end{longtable}}

A $6 \times 16$ generator matrix for the binary $[16,6,5]$ LCD Hom-IDD code obtained using the basis $S_{4}$ is $$\begin{pmatrix}
1 & 1 & 1 & 0 & 0 & 1 & 0 & 1 & 0 & 0 & 0 & 0 & 0 & 0 & 0 & 0 \\
0 & 0 & 1 & 1 & 0 & 1 & 0 & 0 & 1 & 0 & 0 & 0 & 0 & 1 & 0 & 0 \\
0 & 0 & 0 & 1 & 1 & 1 & 0 & 1 & 0 & 0 & 0 & 1 & 0 & 1 & 1 & 0 \\
0 & 1 & 0 & 1 & 1 & 0 & 1 & 1 & 0 & 0 & 0 & 0 & 0 & 1 & 0 & 1 \\
0 & 1 & 0 & 0 & 0 & 0 & 1 & 1 & 0 & 1 & 0 & 0 & 1 & 0 & 0 & 0 \\
0 & 1 & 0 & 1 & 0 & 0 & 0 & 0 & 0 & 0 & 0 & 0 & 1 & 1 & 1 & 0
\end{pmatrix}.$$
\end{example}

\section{Conclusion}\label{section 7}
This article studied the inner and outer twisted derivations in algebraic number fields. We have studied the twisted derivations of some special integral extensions of a commutative unital ring. Given a commutative unital ring $A$, its integral extension $B = A[\theta]$, which is also an integral domain, the minimal splitting field $\mathbb{E}$ of $\theta \in B$ over the quotient field $\mathbb{K}$ of $B$, and a pair of distinct $A$-algebra homomorphisms $\sigma, \tau: B \rightarrow \mathbb{E}$, we have classified all $A$-linear maps $D: B \rightarrow \mathbb{E}$ which are $(\sigma, \tau)$-derivations. Consequently, we have obtained important results regarding twisted derivations in algebraic number fields and their ring of algebraic integers. Then we have conjectured characterizations for a $(\sigma, \tau)$-derivation $D:O_{\mathbb{K}} \rightarrow O_{\mathbb{K}}$ to be inner, where $O_{\mathbb{K}}$ is the ring of algebraic integers of an $n^{\text{th}}$-cyclotomic number field $\mathbb{K} = \mathbb{Q}(\zeta)$ ($\zeta$ a primitive $n^{\text{th}}$-root of unity) when $n$ is of the forms $n=2^{r}p$ ($r \in \mathbb{N}$, $p$ an odd rational prime) and $n = p^{k}$ ($k \in \mathbb{N} \setminus \{1\}$, $p$ any rational prime).
Then, as an application of our main Theorem \ref{theorem 3.2} (in particular, Corollary \ref{corollary 3.7}) and also conjectures \ref{conjecture 4.2} and \ref{conjecture 4.4}, we have conjectured the existence and non-existence of non-zero outer derivations $D: O_{\mathbb{K}} \rightarrow O_{\mathbb{K}}$ for the above two forms of $n$, thus answering the twisted derivation problem in $O_{\mathbb{K}}$.  Finally, we have given another application of our main Theorem \ref{theorem 3.2} in the construction of codes in coding theory. This research, therefore, opens a way for future research into twisted derivations in algebraic number fields, which can prove to be highly useful from the viewpoint of their vast applications in mathematics and other sciences, especially coding theory.\vspace{10pt}

\noindent \textbf{Acknowledgements}\vspace{8pt}

\noindent The second author was the ConsenSys Blockchain chair professor when the work was being carried out. He thanks ConsenSys AG for that privilege.\vspace{10pt}

\noindent \textbf{Declaration of Interest Statement}\vspace{8pt}

\noindent The authors report that there are no competing interests to declare.

\bibliographystyle{plain}
\bibliography{references}

@Article{Chaudhuri,
  author  = {Chaudhuri, Dishari},
  journal = {arXiv preprint arXiv:1912.12812},
  title   = {A note on $(\sigma, \tau)$-Derivations on Commutative Algebras},
  year    = {2019},
}

@Article{MohammadAshraf2006,
  author  = {Ashraf Mohammad and Ali Shakir and Haetinger Claus},
  journal = {The Aligarh Bull of Math},
  title   = {On derivations in rings and their applications},
  year    = {2006},
  number  = {2},
  pages   = {79-107},
  volume  = {25},
}

@Article{AleksandrAlekseev2020,
  author  = {Alekseev, Aleksandr and Arutyunov, Andronick and Silvestrov, Sergei},
  journal = {arXiv preprint arXiv:2008.00390},
  title   = {On $(\sigma, \tau)$-derivations of group algebra as category characters},
  year    = {2020},
}

@Article{Creedon2019,
  author  = {Creedon, Leo and Hughes, Kieran},
  journal = {Finite Fields and Their Applications},
  title   = {Derivations on group algebras with coding theory applications},
  year    = {2019},
  pages   = {247-265},
  volume  = {56},
}

@Article{A.A.Arutyunov2020,
  author  = {Arutyunov, A. A. and Mishchenko, A. S. and Shtern, A. I.},
  journal = {Journal of Mathematical Sciences},
  title   = {Derivations of group algebras},
  year    = {2020},
  number  = {6},
  pages   = {709-718},
  volume  = {248},
}

@Article{Arutyunov2020,
  author  = {Arutyunov, Andronick Aramovich},
  journal = {Proceedings of the Steklov Institute of Mathematics},
  title   = {Derivation algebra in noncommutative group algebras},
  year    = {2020},
  number  = {1},
  pages   = {22-34},
  volume  = {308},
}

@Book{IanStewart2002,
  author    = {Ian Stewart and David Tall},
  publisher = {A. K. Peters},
  title     = {Algebraic Number Theory and {F}ermat's Last Theorem},
  year      = {2002},
  edition   = {3rd},
  isbn      = {9781568811192; 1568811195},
}

@Book{Marcus2018,
  author    = {Daniel A. Marcus},
  publisher = {Springer},
  title     = {Number fields},
  year      = {2018},
  edition   = {2nd},
  isbn      = {3319902326; 9783319902326; 3319902334; 9783319902333},
  series    = {Universitext},
}

@Book{Jacobson1964,
  author    = {N. Jacobson},
  publisher = {American Mathematical Society},
  title     = {Structure of Rings},
  year      = {1964},
  series    = {CP37},
}

@Article{Muhiuddin2012,
  author  = {Muhiuddin, G and Al-Roqi, Abdullah M},
  journal = {Discrete Dynamics in Nature and Society},
  title   = {On $(\alpha, \beta)$-derivations in {BCI}-algebras},
  year    = {2012},
  pages   = {11},
  volume  = {2012},
}

@Article{Brear1992,
  author  = {Bresar, M.},
  journal = {Acta Sct. Math},
  title   = {On the composition of $(\alpha, \beta)$-derivations of rings and applications to von {N}eumann algebras},
  year    = {1992},
  pages   = {369-375},
  volume  = {56},
}

@Article{Raza2016,
  author  = {Raza, Mohd Arif and Rehman, Nadeem Ur},
  journal = {Proceedings-Mathematical Sciences},
  title   = {On prime and semiprime rings with generalized derivations and non-commutative {B}anach algebras},
  year    = {2016},
  number  = {3},
  pages   = {389-398},
  volume  = {126},
}

@Article{Haetinger2011,
  author  = {Haetinger, C. Ashraf, M. and Ali, S.},
  journal = {Int. J. Math., Game Theory and Algebra},
  title   = {On higher derivations: a survey},
  year    = {2011},
  pages   = {359-379},
  volume  = {18},
}

@Article{Klimek2021,
  author  = {Klimek, Slawomir and McBride, Matt},
  journal = {Journal of the Australian Mathematical Society},
  title   = {Unbounded Derivations in Algebras Associated with Monothetic Groups},
  year    = {2021},
  number  = {3},
  pages   = {345-371},
  volume  = {111},
}

@Article{Atteya2019,
  author  = {Atteya, Mehsin Jabel},
  journal = {Symmetry},
  title   = {New Types of Permuting n-Derivations with Their Applications on Associative Rings},
  year    = {2019},
  number  = {1},
  pages   = {46},
  volume  = {12},
}

@Article{Kim2014,
  author  = {Kim, Kyung Ho},
  journal = {Journal of the Chungcheong Mathematical Society},
  title   = {On $(f,g)$-derivations of incline algebras},
  year    = {2014},
  number  = {4},
  pages   = {643-649},
  volume  = {27},
}

@Article{Chaudhry2011,
  author  = {Chaudhry, Muhammad Anwar and Ullah, Zafar},
  journal = {Quaestiones Mathematicae},
  title   = {On generalized ($\alpha$, $\beta$)-derivations on lattices},
  year    = {2011},
  number  = {4},
  pages   = {417-424},
  volume  = {34},
}

@Article{KamaliArdakani2013,
  author  = {Kamali Ardakani, L. and Davvaz, Bijan},
  journal = {Journal of Algebraic Systems},
  title   = {f-derivations and $(f,g)$-derivations of {MV}-algebras},
  year    = {2013},
  number  = {1},
  pages   = {11-31},
  volume  = {1},
}

@Article{Mustafa2013,
  author  = {Mustafa, A{\c{S}}CI and Ceran, {\c{S}}ahin},
  journal = {Mathematical Sciences and Applications E-Notes},
  title   = {Generalized $(f,g)$-derivations of lattices},
  year    = {2013},
  number  = {2},
  pages   = {56-62},
  volume  = {1},
}

@Article{Boucher2014,
  author  = {Boucher Delphine and Felix Ulmer},
  journal = {Designs, codes and cryptography},
  title   = {Linear codes using skew polynomials with automorphisms and derivations},
  year    = {2014},
  pages   = {405-431},
  volume  = {70},
}

@Article{Arutyunov2020b,
  author  = {Arutyunov, Andronick Aramovich},
  journal = {Russian Mathematics},
  title   = {Combinatorial description of derivations in group algebras},
  year    = {2020},
  number  = {12},
  pages   = {67-73},
  volume  = {64},
}

@Article{Ge2008,
  author  = {Yimin Ge},
  journal = {Mathematical Reflections},
  title   = {Elementary Properties of Cylcotomic Polynomials},
  year    = {2008},
  note    = {Expository Note},
  volume  = {2},
  url     = {https://api.semanticscholar.org/CorpusID:14484779},
}

@Misc{Porter2015,
  author = {Brett Porter},
  title  = {Cylcotomic Polynomials},
  year   = {2015},
  url    = {https://api.semanticscholar.org/CorpusID:262532830},
}

@Article{Vukman2005,
  author  = {Vukman, Joso and Kosi-Ulbl, Irena},
  journal = {International Journal of Mathematics and Mathematical Sciences},
  title   = {A note on derivations in semiprime rings},
  year    = {2005},
  pages   = {3347-3350},
  volume  = {2005},
}

@Article{Kawada1951,
  author  = {Kawada, Yukiyosi},
  journal = {Annals of Mathematics},
  title   = {On the derivations in number fields},
  year    = {1951},
  pages   = {302-314},
}

@Article{Kinohara1952,
  author  = {Kinohara, Akira},
  journal = {Journal of Science of the Hiroshima University, Series A (Mathematics, Physics, Chemistry},
  title   = {On the Derivations and the Relative Differents in Algebraic Number Fields},
  year    = {1952},
  pages   = {261-266},
  volume  = {16},
}

@Article{Weil1943,
  author  = {A. Weil},
  journal = {Bulletin of American Mathematical Society},
  title   = {Differentiation in algebraic number fields},
  year    = {1943},
  number  = {41},
  volume  = {49},
}

@Article{Narkiewicz1969,
  author  = {Narkiewicz, W},
  journal = {Colloquium Mathematicae},
  title   = {On a theorem of {A}. {W}eil on derivations in number fields},
  year    = {1969},
  number  = {1},
  pages   = {57-58},
  volume  = {20},
}

@Article{Davis1973,
  author  = {Davis, RL},
  journal = {Transactions of the American Mathematical Society},
  title   = {Higher derivations and field extensions},
  year    = {1973},
  pages   = {47-52},
  volume  = {180},
}

@Article{Hartwig2006,
  author  = {Hartwig, Jonas T and Larsson, Daniel and Silvestrov, Sergei D},
  journal = {Journal of algebra},
  title   = {Deformations of {L}ie algebras using $\sigma$-derivations},
  year    = {2006},
  number  = {2},
  pages   = {314-361},
  volume  = {295},
}

@Article{Mordeson1972,
  author  = {J. N. Mordeson and B. Vinograde},
  journal = {Proceedings of the American Mathematical Society},
  title   = {On high order derivations of fields},
  year    = {1972},
  number  = {2},
  volume  = {32},
}

@Article{Bridges1984,
  author  = {Bridges, Douglas and Bergen, Jeffrey},
  journal = {Proceedings of the American Mathematicsal Society},
  title   = {On the derivation of $x^{n}$ in a ring},
  year    = {1984},
  number  = {1},
  pages   = {25-29},
  volume  = {90},
}

@Article{Deveney1979,
  author  = {Deveney, James K and Mordeson, John N},
  journal = {Proceedings of the American Mathematical Society},
  title   = {Pencils of higher derivations of arbitrary field extensions},
  year    = {1979},
  number  = {2},
  pages   = {205-211},
  volume  = {74},
}

@Article{Ziegler2003,
  author  = {Ziegler, Martin},
  journal = {The Journal of Symbolic Logic},
  title   = {Separably closed fields with {H}asse derivations},
  year    = {2003},
  number  = {1},
  pages   = {311-318},
  volume  = {68},
}

@Article{Messmer1995,
  author  = {Messmer, Margit and Wood, Carol},
  journal = {The Journal of Symbolic Logic},
  title   = {Separably closed fields with higher derivations {I}},
  year    = {1995},
  number  = {3},
  pages   = {898-910},
  volume  = {60},
}

@Article{Suzuki1981,
  author  = {Suzuki, Satoshi},
  journal = {Journal of Mathematics of Kyoto University},
  title   = {Some types of derivations and their applications to field theory},
  year    = {1981},
  number  = {2},
  pages   = {375-382},
  volume  = {21},
}

@Article{Derksen1993,
  author  = {Derksen, Harm GJ},
  journal = {Journal of pure and applied Algebra},
  title   = {The kernel of a derivation},
  year    = {1993},
  number  = {1},
  pages   = {13-16},
  volume  = {84},
}

@Article{Larsson2017,
  author  = {Larsson, Daniel},
  journal = {Journal of Number Theory},
  title   = {Equivariant hom-{L}ie algebras and twisted derivations on (arithmetic) schemes},
  year    = {2017},
  number  = {249-278},
  volume  = {176},
}

@InCollection{Lagarias2005,
  author    = {Lagarias, Jeffrey C},
  booktitle = {Number Fields and Function Fields—Two Parallel Worlds},
  publisher = {Springer},
  title     = {A Note on Absolute Derivations and Zeta Functions},
  year      = {2005},
  pages     = {279-285},
}

@Conference{N.Kurokawa2003,
  author    = {N. Kurokawa, H. Ochiai and M. Wakayama},
  booktitle = {Documenta Math., Extra Volume: Kazuya Kato’s Fiftieth Birthday},
  title     = {Absolute Derivations and Zeta Functions},
  year      = {2003},
  pages     = {565-584},
}

@Article{Heerema1981,
  author  = {Heerema, N and Morrison, T},
  journal = {Canadian Journal of Mathematics},
  title   = {On Derivations Induced by $p$-Adic Fields},
  year    = {1981},
  number  = {4},
  pages   = {840-856},
  volume  = {33},
}

@Article{Siebert1996,
  author  = {Siebert, Thomas},
  journal = {Mathematische Annalen},
  title   = {Lie algebras of derivations and affine algebraic geometry over fields of characteristic 0},
  year    = {1996},
  number  = {2},
  pages   = {271-286},
  volume  = {305},
}

@Article{Callahan1973,
  author  = {Callahan, FP},
  journal = {The American Mathematical Monthly},
  title   = {An identity satisfied by derivations of a purely inseparable field},
  year    = {1973},
  number  = {1},
  pages   = {40-42},
  volume  = {80},
}

@Article{Heerema1962,
  author  = {Heerema, Nickolas},
  journal = {Transactions of the American Mathematical Society},
  title   = {Derivations on $p$-adic fields},
  year    = {1962},
  number  = {2},
  pages   = {346-351},
  volume  = {102},
}

@Article{Thwing1974,
  author  = {Thwing, Henry W and Heerema, Nickolas},
  journal = {Transactions of the American Mathematical Society},
  title   = {Fields of constants of integral derivations on a $p$-adic field},
  year    = {1974},
  pages   = {277-290},
  volume  = {195},
}

@PhdThesis{Ilwale2023,
  author = {Ilwale, Kwalombota},
  school = {Link{\"o}ping University Electronic Press},
  title  = {Noncommutative Riemannian Geometry of Twisted Derivations},
  year   = {2023},
}

@article{hosseini2018identities,
  title={Identities related to ($\sigma, \tau$)-{L}ie derivations and ($\sigma, \tau$)-derivations},
  author={Hosseini, Amin and Fo{\v{s}}ner, Ajda},
  journal={Bollettino dell'Unione Matematica Italiana},
  volume={11},
  pages={395--401},
  year={2018},
  publisher={Springer}
}

@book{pierce,
  title={The associative algebra},
  author={Pierce, Richard S},
  year={1982},
  publisher={Springer}
}

@article{manju2024,
  title={($\sigma$, $\tau$)-derivations of group rings with applications},
  author={Manju, Praveen and Sharma, Rajendra Kumar},
  journal={Finite Fields and Their Applications},
  volume={107},
  pages={102629},
  year={2025},
  publisher={Elsevier}
}

@Article{Manju2023b,
  title={($\sigma, \tau$)-Derivations of Number Rings with Coding Theory Applications},
  author={Manju, Praveen and Sharma, Rajendra Kumar},
  journal={arXiv preprint arXiv:2412.03500},
  year={2026}
}

\end{document}